\documentclass[11pt,twoside,a4paper]{article}
\usepackage{latexsym}
\usepackage{amsthm,amssymb,amsbsy,amsmath,amsfonts,amssymb,amscd}
\usepackage{graphicx,dsfont}
\usepackage{caption,color,xcolor}
\usepackage{bbm}
\usepackage{hyperref}
\usepackage{ulem}
\newcommand{\commentout}[1]{}
\newcommand {\fer}   {\eqref}

\newcommand {\e}  {\varepsilon}

\newcommand {\Chi} {{\bf \raise 2pt \hbox{$\chi$}} }
\newcommand {\f}   {\frac}
\newcommand {\p}   {\partial}

\newcommand{\beq}{\begin{equation}}
\newcommand{\eeq}{\end{equation}}
\newcommand{\bal}{\begin{align}}
\newcommand{\bc}{\begin{cases}}
\newcommand{\ec}{\end{cases}}
\newcommand{\bea} {\begin{array}{rl}}
\newcommand{\eea} {\end{array}}
\newcommand{\bepa}{\left\{ \begin{array}{l}}
\newcommand{\eepa} {\end{array}\right.}
\newtheorem{theorem}{Theorem}[section]
\newtheorem{lemma}[theorem]{Lemma}
\newtheorem{cor}[theorem]{Corollary}

\newtheorem{prop}[theorem]{Proposition}
\newtheorem{rem}[theorem]{Remark}

\def\be{\begin{eqnarray}}
\def\ee{\end{eqnarray}}
\def\ben{\begin{eqnarray*}}
\def\een{\end{eqnarray*}}

\numberwithin{equation}{section}
\numberwithin{figure}{section}

\def\be{\begin{eqnarray}}
\def\ee{\end{eqnarray}}

\renewcommand{\P}{\mathbb{P}}

\def\me{\medskip\noindent}

\newcommand{\Co}{\mathcal{C}}

\newcommand{\adh}{\mathrm{cl}}

\def\D{\mathbb{D}}

\def\N{\mathbb{N}}
\def\P{\mathbb{P}}
\def\R{\mathbb{R}}
\def\E{\mathbb{E}}

\def\ind{{\mathchoice {\rm 1\mskip-4mu l} {\rm 1\mskip-4mul}
{\rm 1\mskip-4.5mu l} {\rm 1\mskip-5mu l}}}

\def\11{\mathbbm{1}}
\def\dSko{d_{\text{Sko}}}

\title{From stochastic individual-based models to free-boundary Hamilton-Jacobi equations}

\author{
 Nicolas Champagnat\thanks{Universit\'e de Lorraine, CNRS, Inria, IECL, F-54000 Nancy, France; E-mail: \texttt{nicolas.champagnat@inria.fr}}
\and {Sylvie M\'el\'eard}\thanks{Ecole Polytechnique, CNRS, Institut polytechnique de Paris, Inria, route de Saclay, 91128 Palaiseau Cedex-France; E-mail: \texttt{sylvie.meleard@polytechnique.edu}}  \and Sepideh Mirrahimi\thanks{Univ Toulouse, INSA Toulouse, CNRS, IMT, Toulouse, France; E-mail: \texttt{sepideh.mirrahimi@math.univ-toulouse.fr}} \and Viet Chi Tran \thanks{Univ. Lille, CNRS, Inria, UMR 8524 - Laboratoire Paul Painlev\'e, F-59000 Lille, France; E-mail: \texttt{chi.tran@inria.fr}}
 }
\date{\today}

\begin{document}
\maketitle
\pagestyle{plain}
\pagenumbering{arabic}

 \begin{abstract}
 We study a stochastic branching model for a population structured by a quantitative phenotypic trait and subject to births, deaths, and mutations. In a regime of large population and small mutations, and in logarithmic scales of size and time, we derive a certain class of free boundary Hamilton-Jacobi equations with state constraints from the stochastic individual-based system. This goes beyond the classical Hamilton-Jacobi equations obtained from deterministic models by taking into account the possible extinction of the system in certain regions of the trait space. The proof is obtained by combining methods for the analysis of Hamilton-Jacobi equations with probabilistic tools from the theory of large deviations and branching processes.
 \end{abstract}

\me Keywords: stochastic birth-death model, measure-valued process, large population approximation, 
mutation, Hamilton-Jacobi equations, large deviations.

\bigskip
\noindent \emph{MSC 2000 subject classification:} 92D25, 92D15, 60J80, 60F99, 35F21.
\bigskip

\section{Introduction}
\label{sec:intro}

In the mathematical modeling of eco-evolutionary dynamics of phenotypically structured populations with mutation and selection, several methods have been developed over the past two decades, based either on stochastic or deterministic approaches.
Among these approaches, some of them aim to describe long-term evolutionary dynamics of dominant evolutionary paths in large populations, based either on stochastic processes or deterministic equations.

The stochastic approach developed in~\cite{champagnat06,champagnatmeleard2011} was based on birth and death measure-valued processes~\cite{fourniermeleard,champagnatferrieremeleard} involving mutation and competition. The mutation  time scale was assumed  much slower than that of demographic events and, using slow-fast time scales arguments,  some dominant evolutionary dynamics were highlighted  in the time scale of mutations. However, the hypothesis of very rare mutations, which may be reasonable for certain phenotypes, may seem unrealistic for others, as may be the very long timescale required to describe evolutionary phenomena~\cite{WaxmanGavrilets2005}. 

At the same time, an approach involving Hamilton-Jacobi equations has been developed to characterize the  evolutionary dynamics~\cite{diekmannjabinmischlerperthame,Barles2009}, as limits of integro-differential selection-mutation equations under the assumption of small mutation effect and long time, and using a Hopf-Cole transformation. These integro-differential equations themselves have been derived as large population limits of stochastic individual-based models~\cite{fourniermeleard,champagnatferrieremeleard}. The order in which these limiting procedures are applied can influence the outcome, since they do not always commute, potentially leading to some inaccuracies in the deterministic approximations. In particular, the large population approximation prevents phenomena of local extinction, which may lead to an overestimation of the speed of evolution, as this may be slowed down by certain extinction events \cite{perthame2010}.

More recently~\cite{durrettmayberry,Bovier2019,champagnatmeleardtran2021,coquillekrautsmadi,esserkraut2024}, stochastic birth and death mutation-selection models were studied under different parameter scalings, with rare mutations, but not so rare as in the previous stochastic approach. They were inspired by some stochastic Hopf-Cole transformation, making them much closer to the deterministic models described above.\\

In this work, following  our seminal work \cite{champagnat23} (see also \cite{Jeddi}),  we reconcile the stochastic and deterministic  approaches by deriving a new class of  state-constrained Hamilton-Jacobi equations directly from the stochastic system in the time and size scales of \cite{champagnatmeleardtran2021} but assuming small mutations instead of rare mutations. In~\cite{champagnat23}, we studied the case of a uniformly supercritical branching population with discretized and compact state space using a direct convergence approach and deterministic methods on Hamilton-Jacobi equations~\cite{Barles2009}. In~\cite{Jeddi}, this result was extended to weaker assumptions in the supercritical case and new results were obtained in the subcritical case, allowing extinction of the population.    
Here we develop a new and robust mathematical strategy to study the general case where the growth rate can change sign, without requiring a discretized trait space, and where the restrictive assumptions required in \cite{champagnat23,Jeddi} are relaxed. This leads to a new limiting object which takes into account the effect of demographic stochasticity and possible extinction of small subpopulations. This approach yields more accurate and informative results with regard to modeling. In particular, the speed of evolution can be slowed down compared to classical Hamilton Jacobi approaches, making this model relevant for understanding the evolutionary dynamics of advantageous traits.

Our main result is based on the variational representation of the solution of the Hamilton-Jacobi equations and on exponential deviations results for branching processes. We were inspired by an analytical approach proposed in \cite{Mirrahimi2012} and by the study of branching processes in~\cite{berestyckibrunetharrisharrisroberts,mallein}. \\

In Section~\ref{sec:intro}, we develop the model and state our main results (Theorems \ref{thm:bornesup} and \ref{thm:borneinf}), that is large deviations estimates for historical processes on a logarithmic time scale and on the path space, characterized by a  variational expression.
The latter is related to the solution of state constrained Hamilton-Jacobi equations (Theorem \ref{prop:var-to-visc}  and Corollary \ref{cor:lineage}). In Section \ref{sec:biblio}, we compare our results with existing work in different contexts, all seeking to characterize optimal trajectories in one way or another. Based on Feynman-Kac formula for branching processes, we study in Section~\ref{sec:LDP} the underlying Markovian mutational process for which large deviation results can be easily obtained. We complete these results by proving some uniformity with respect to the initial condition in the large deviation principle. Sections~\ref{sec:in-expectation},~\ref{sec:upper-bound} and~\ref{sec:lower-bound} are devoted to the proofs of Theorems \ref{thm:bornesup} and \ref{thm:borneinf}. The proof of the lower bound (Theorem~\ref{thm:borneinf}) is more difficult and we use moment methods developed for branching processes as in \cite{mallein} to control the number of particles whose paths are in a given tube, following the strategy used in \cite{berestyckibrunetharrisharrisroberts}. Section 6 studies the link between the variational formulation of the limit and the Hamilton-Jacobi equation.

\bigskip
Notation : The space of finite measures on $\R$ is denoted by ${\cal M}_F(\R)$ and the set of finite point measures on $\R$ by ${\cal M}_P(\R)$. For a measure $\mu$ and a $\mu$-integrable or positive function $f$, we write $\int_{\R}f(x) \mu(dx)=\langle \mu,f\rangle$.

 For a Polish space $E$ (in particular for $E=\R$ or $E=\mathcal{M}_F(\R)$), we will denote by $\mathbb{D}([0,t],E)$ the Skorohod space of right-continuous and left-limited (càdlàg) functions from $[0,t]$ to $E$.
 We will sometimes use the notation $\D[0,t]$ or $\D$ instead of $\D([0,t],\R)$ and $\D(\R_+,\R)$ respectively. 
 The space $\D[0,t]$ is equipped with the Skorohod distance  $\dSko$ that makes this space Polish, see e.g. \cite{billingsley}: for $f,g\in \D[0,t]$,
 \begin{equation}
     \label{def:dSko}
     \dSko(f,g)= \inf_{\lambda} \Big\{\sup_{s\in [0,t]} |f\circ \lambda(t)-g (t)|+ \sup_{s\not=r} \big|\log \frac{\lambda(s)-\lambda(r)}{s-r}\big|\Big\},
 \end{equation}where the supremum is taken over all increasing homeomorphisms of $[0,t]$.
 We denote by $C[0,t]$ the set of continuous functions from $[0,t]$ to $\R$, by $C^\infty[0,t]$ the set of infinitely differentiable real functions on $[0,t]$ and by $AC[0,t]$ the set of absolutely continuous real functions on $[0,t]$.

\subsection{Model}
\label{sec:model}

We consider a stochastic birth-death-mutation process describing an asexual population of individuals (for example, cells or bacteria), characterized by a quantitative trait $x$ belonging to $\R$. Notice that for better legibility we restrict our work to  $x\in \R$. However, our results can easily be extended to $x\in \R^d$.
We introduce a parameter  $K\in \N$ scaling the initial population size and the mutation amplitude. 
We assume that an individual with trait $x\in \R$ undergoes the following events independently from the other individuals: this individual can
\begin{itemize}
    \item give birth to a new individual with the same trait $x$ at rate $b(x)$;
    \item die at rate $d(x)$;
    \item give birth to a mutant individual at rate $p(x)$ and the mutant trait is given by $x+\frac{Y}{\log K}$ where $Y$ is distributed as $G(y)dy$ where $G$ is a probability density function. 
\end{itemize}

We assume that $G:\R\to\R_+^*$ is positive continuous, that it is an even function and that it has all its exponential moments finite. 

\medskip
We also assume that   the functions $b, d, p$ are nonnegative
locally Lipschitz-continuous bounded functions, and that there exist positive constants $\bar b$, $\bar p$ and $\underline{p}$ such that for all $x\in \R$,
 \beq
 \label{borne-bp}
0\le b(x)\leq \bar b, \qquad \underline{p}\leq p(x) \leq \bar p.
 \eeq   
 In particular, there exists a positive constant $\bar{R}$ such that the growth rate satisfies for all $x\in \R$
 \beq
 \label{def:R}
 R(x):=b(x)+p(x)-d(x) \le \bar{R}.
 \eeq
 
In the model, we index individuals using the Ulam-Harris-Neveu numbering. The set of labels is 
\[ 
\mathcal{U}= \bigcup_{n\geq 0} \left(\N^*\times\{0,1\}^n\right),
\]
where we use the usual notation $v_1v_2\ldots v_n$ for the vector $(v_1,v_2,\ldots,v_n)\in{\cal U}$. Individuals initially present in the population form the first generation and are labeled by integers from $1$ to $N_0^K$, the initial number of individuals. When an individual of label $v\in \mathcal{U}$ reproduces, we consider that it acquires the label $v0$ and its offspring is given the label $v1$.

\medskip

The population dynamics at scale $K$ is the point measure-valued process $(Z^K_t)_{t\geq 0} $ defined for each $t\ge 0$ by
$$Z^K_t = \sum_{v\in V^K_t} \delta_{X^{K,v}_{t}},$$
where $V^K_t$ denotes the labels of individuals alive at time $t$ and each individual $v$ alive at time $t$ has the trait $X^{K,v}_t$. In the sequel, we will sometimes also denote by $V^K_{[0,t]}$ the set of labels for individuals born before time $t$ (including those still alive at time $t$), i.e.\ $V^K_{[0,t]}=\bigcup_{s\in[0,t]}V^K_s$.\\

We  assume that 
\begin{equation}\label{conditioninitiale}
Z^K_0 \mbox{ is a Poisson point measure on $\R$ with intensity measure } K^{\beta^K_0(x)}dx,
\end{equation}
where for any $K$, $\beta^K_0$ is a continuous function on $\R$ which converges uniformly (as $K$ tends to infinity) to a function $\beta_0$ such that there exist constants $\bar{\beta}$ and $\alpha>0$ such that
\begin{equation}
    \label{as:beta0}
    \beta_0\text{ is locally Lipschitz on $\R$ and }\beta_0(x)\le \bar{\beta}-\alpha |x|,\ \forall x\in\R.
\end{equation} 
This implies in particular that the intensity measure $K^{\beta^K_0(x)}dx$ is finite for any $K$ and hence that the initial number of individuals $N_0^K=\langle Z^K_0,1\rangle$ is almost surely finite, i.e.\ $Z^K_0\in{\cal M}_P(\R)$, since
\begin{equation}
\label{temps0}
E(N_0^K) = \E( \langle Z^K_0,1\rangle) =\int_\R K^{\beta^K_0(x)}dx <+\infty.\end{equation}

\bigskip 
The process $Z^K$ can be represented as the unique strong solution of a stochastic differential equation driven by Poisson point measures. 
Let us consider a Poisson point process $N(ds,dv,d\theta)$ on $\R_+\times \mathcal{U}\times \R_+$ with intensity measure $ds\otimes n(dv)\otimes d\theta$ where $ds$ and $d\theta$ are Lebesgue measures on $\R_+$ and $n(dv)$ is the counting measure on the denumerable set $\mathcal{U}$. We also introduce a Poisson point measure $Q(ds,dv,dy,d\theta)$ on $\R_+\times \mathcal{U}\times \R\times \R_+$  with intensity measure $ds\otimes n(dv)\otimes  G(y)dy\otimes d\theta$. We assume that the random measures $Z^K_0$, $N$ and $Q$ are  independent.  
 
\medskip
Let us consider a test function $\varphi\in \Co_b( \R,\R)$, then
\begin{align}
   \langle Z^K_t,\varphi\rangle= &  \int_\R \varphi(x)Z^K_t(dx)=\sum_{v\in V^K_t} \varphi\big(X^{K,v}_t\big)\nonumber\\
    = & \langle Z^K_0,\varphi\rangle 
     + \int_0^t \int_{\mathcal{U}}\int_{\R_+} \ind_{v\in V^K_{s_-}} \varphi(X^{K,v}_{s_-})\Big( \ind_{\theta\leq b(X^{K,v}_{s_-})} \nonumber\\
     & \hskip 2cm -\ind_{b(X^{K,v}_{s_-})<\theta\leq b(X^{K,v}_{s_-})+d(X^{K,v}_{s_-})}\Big) N(ds,dv,d\theta)\nonumber\\
     + &  \int_0^t\int_{\mathcal{U}} \int_\R \int_{\R_+}
  \ind_{v\in V^K_{s_-},\theta\leq p(X^{K,v}_{s_-})}  \varphi\left(X^{K,v}_{s_-}+\frac{y}{\log K}\right)   Q(ds,dv,dy,d\theta),\label{eq:eds_Poisson}
\end{align}
For $v\in V^K_t$, we can define the lineage of $v$ as follows, and this process will be denoted $(X^{K,v}_{s\wedge t})_{s\geq 0}$. It is a path of $\D(\R_+,\R)$, constant after time $t$ with value $X^{K,v}_t$, and that takes at times $s\leq t$ the trait value of the (unique) ancestor of $v$ (possibly $v$ itself) living at this time. In particular, on the event that individual $v$ reproduces before dying, $X^{K,v}_t=X^{K,v0}_t=X^{K,v1}_t$ for all $t$ (strictly) smaller than the reproduction time of $v$.\\

Using standard It\^o calculus (see \cite{ikedawatanabe,fourniermeleard}), we obtain that:
\begin{align}
\label{semimartingale}
    \langle Z^K_t,\varphi\rangle= &\langle Z^K_0,\varphi\rangle + \int_0^t \langle Z^K_s, (b+p-d) \varphi\rangle \ ds\nonumber\\
    + & \int_0^t\int_{\R} \int_{\R} \big(\varphi(x+\frac{y}{\log K})-\varphi(x)\big) p(x) \,G(y) dy\ Z^K_s(dx)\ ds+M^{K,\varphi}_t,
 \end{align}
 where $M^{K,\varphi}$ is a square integrable martingale  with predictable quadratic variation process:
 \begin{align}
     \langle M^{K,\varphi}\rangle_t= &  \int_0^t \int_{\R} \big(b(x)+p(x)+d(x)\big)\varphi^2(x)\  Z^k_s(dx)\ ds\nonumber\\
     + &
     \int_0^t \int_\R  \int_\R p(x) \, \big(\varphi(x+\frac{y}{\log K})-\varphi(x)\big)^2   G(y)dy\ Z^K_s(dx)\ ds.
 \end{align}
Using \eqref{borne-bp} and~\eqref{conditioninitiale}, it is standard~\cite{fourniermeleard} to prove that for any $T>0$ and $n>0$, 
$$\E(\sup_{t\le T}\langle Z^K_t,1\rangle^n)<+\infty,$$
and then the process $Z^K$ is well defined on any time interval 
$[0,T]$ and 
belongs to $\mathbb{D}(\R_+,{\cal M}_P(\R))$.

\bigskip

We also introduce the associated historical process $(\Theta^K_t, t\ge 0)$ which is a point measure-valued process taking values in ${\cal M}_P(\mathbb{D})$, and  defined for any $t\geq 0$ by
$$ \Theta^K_0 =  Z^K_0\ ;\ \Theta^K_t = \sum_{v\in V^K_t} \delta_{X^{K,v}_{.\wedge t}}.$$
It is possible to write an SDE in the spirit of \eqref{eq:eds_Poisson} to describe the dynamics of the historical process, see \cite{CHMT,HenryMeleardTran,meleardtran}.  

\medskip In the sequel, because the mutation steps are of order of magnitude $1/\log K$, we will consider the process at the time scale  $\log K$. Hence we define the time-changed historical process $\widetilde{\Theta}^K$ taking values in ${\cal M}_P(\mathbb{D})$  for all $t>0$ as
\begin{equation}
\label{histo}
\widetilde{\Theta}^K_t=\sum_{v\in V^K_{t\log K}}\delta_{X^{K,v}_{(.\wedge t)\log K}}.
\end{equation}
For all $T>0$ and all measurable $A\subset\mathbb{D}[0,T]$, we define for all $t\in[0,T]$ the set 
\begin{equation}
    \label{At}
 A_t:=\{f_{\cdot\wedge t},\,f\in A\}
\end{equation}of functions in $A$ stopped at time $t$. 
We define the number of particles living at time $t\log K$ having their lineage in $A$ by
\begin{equation}\label{def:NKA}
N^{K,A}_t=\langle \widetilde{\Theta}^K_{t},\mathbbm{1}_{A_t}\rangle.    
\end{equation}

As in our seminal works \cite{champagnatmeleardtran2021} and \cite{champagnat23}, we are interested in capturing the number of trajectories at logarithmic time scale living in some fixed set, which is of the order of magnitude of a power of $K$. To obtain the limiting dynamics (in $K$) of the exponent of such $K$-power number, we are led to study the asymptotic behaviour of $\log N^{K,A}_t/\log K$, which will be the aim of Theorems \ref{thm:bornesup} and \ref{thm:borneinf}.  Note that these quantities can be seen as the stochastic analog of the Hopf-Cole transformation used by the analysts to describe concentration phenomena (see for example \cite{BP.GB:08} and Section~\ref{sec:biblio} for more details on the related literature), $1/\log K$ playing in our setting the role of $\varepsilon$ in the usual deterministic setting. To achieve this, we use techniques developed in the theory of large deviations and branching Markov processes.

\subsection{Useful mathematical objects and main results}
In this section, we introduce some notations and useful functions that will allow us to state our main results.
We define
\begin{gather}
    H(\alpha)=\int_{\mathbb{R}}(e^{\alpha y}-1)G(y)\,dy, \label{eq:def-H} \\
   L(x,v)=\sup_{\alpha\in\mathbb{R}}\left(\alpha v-p(x)H(\alpha)\right)\label{eq:def-L}. 
\end{gather}
Our assumptions on $G$ imply that $H(\alpha)<\infty$ for all $\alpha\in\R$ and $H$ and $L$ are convex and superlinear functions with respect to $\alpha$ and $v$ (see \cite[Section 3.3]{Evans}). In particular, there exist a positive constant $A$ and a superlinear function $\mu:\R^+\to \R$, with $\lim_{r\to +\infty}\f{\mu(r)}{r}=+\infty$, such that for all $(x,v)\in \R^2$, 
  \beq
  \label{condL}
 \mu(|v|)-A\leq L(x,v),\qquad \lim_{|v'|\to +\infty} \f{\p_v L(x,v')\cdot v'}{|v'|}=+\infty.
  \eeq
Next, for any $f\in\mathbb{D}[0,t]$, we define
\begin{equation}\label{def:Ft-fonctionnelle}
F_t(f)=\beta_0(f(0))+ \int_0^t R(f_s) ds - I_t(f),
\end{equation}
with 
\begin{equation}
\label{def=fonct-action}
I_{t}(f)= 
\begin{cases}
    \int_0^t L(f_s,\dot{f}_s)ds & \text{if }f\in AC[0,t], \\ +\infty & \text{otherwise.}
\end{cases}
\end{equation}
The functions $F_t$ and $I_t$ will play respectively the roles of a cost function and a good rate function associated with a large deviation principle as we will see later in the article. A non-variational formulation of the rate function $I_t$ will be given in Section \ref{sec:I-nonvariational}.\\

\medskip Our main results are asymptotic upper and lower bounds on the logarithm of $N^{K,A}_t$ defined in \eqref{def:NKA}. In the sequel, for any $a\in\R$ and any sequence of real random variables $(X_n)_{n\in\N}$, we will say that 
\[
\limsup_{n\to+\infty} X_n\leq a \quad\text{in probability}
\]
if, for all $\varepsilon>0$,
\[
\lim_{n\to+\infty}\P\left(X_n\geq a+\varepsilon\right)=0.
\]
The extension to $\liminf X_n\geq a$ in probability is straightforward.

\begin{theorem}
\label{thm:bornesup}
    For any $t>0$ and any closed set $A\subset \mathbb{D}[0,t]$, we have, in probability
\[
 \limsup_{K\rightarrow +\infty}   \frac{1}{\log K}\log N^{K,A}_{t} \leq \sup\{F_t(f); f\in A, \; \forall s\in [0,t],\; F_s(f)\geq 0\}. 
\]
\end{theorem}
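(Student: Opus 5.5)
Our plan has two ingredients. The first is a first-moment (many-to-one / Feynman--Kac) bound, which gives the unconstrained estimate
\[
\limsup_{K\to+\infty}\frac{\log N^{K,A}_t}{\log K}\le \sup_{f\in A}F_t(f)
\]
in probability. The second is an argument that lineages whose running cost $F_s$ becomes negative at some time $s$ are, with high probability, carried by extinct subpopulations. This second ingredient produces the state constraint $F_s(f)\ge0$.

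\textbf{Step 1: the expectation bound.} By the many-to-one formula, and the Poisson structure of $Z^K_0$,
\[
\E(N^{K,A}_t)=\int_\R K^{\beta_0^K(x)}\,\E_x\Big[e^{\log K\int_0^t R(Y^K_s)ds}\ind_{Y^K\in A}\Big]dx .
\]
Here $Y^K$ is the time-rescaled mutation process. It jumps at rate $p(x)\log K$, with jump sizes $Y/\log K$. From the large deviation principle of Section~\ref{sec:LDP} for $Y^K$, with rate function $I_t$ and uniform in the starting point, Varadhan's lemma gives
\[
\limsup_{K}\frac{1}{\log K}\log \E(N^{K,A}_t)\le \sup_{f\in A}F_t(f).
\]
The integration over $x$ is handled by the bound $\beta_0(x)\le\bar\beta-\alpha|x|$: together with $R\le\bar R$ it makes large $|x|$ negligible, so only a compact set of initial points matters. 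Markov's inequality then yields the unconstrained bound in probability.

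\textbf{Step 2: killing trajectories with negative cost.} Fix $\varepsilon>0$ and a time grid $0=s_0<s_1<\dots<s_m=t$ with small mesh $\delta$. Split $A$ into two parts:
\begin{itemize}
\item $A^{\ge}=\{f\in A:\ F_{s_j}(f)\ge-\varepsilon \text{ for all } j\}$, which is closed;
\item for each $j$, the set $B_j=\{f\in A:\ F_{s_j}(f)\le-\varepsilon\}$.
\end{itemize}
For a path in $B_j$, the expected number of individuals alive at time $s_j\log K$ whose lineage lies in a closed neighbourhood of $(B_j)_{s_j}$ is at most $K^{-\varepsilon/2}$, by Step 1 applied at time $s_j$ (closedness of $F$'s superlevel sets uses upper semicontinuity of $F_s$, which follows from lower semicontinuity of $I_s$). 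By Markov's inequality, with probability tending to $1$ no individual alive at time $s_j\log K$ has a lineage in $(B_j)_{s_j}$. Then no individual alive at time $t\log K$ can have its lineage in $B_j$, since its ancestor at time $s_j\log K$ would contradict this. A union bound over the finitely many $j$ gives
\[
N^{K,A}_t=N^{K,A^{\ge}}_t \quad\text{with high probability.}
\]
Step 1 applied to $A^{\ge}$ then bounds the exponent by $\sup\{F_t(f):\ f\in A,\ F_{s_j}(f)\ge-\varepsilon\ \forall j\}$.

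\textbf{Step 3: removing discretisation and slack.} We let $\delta\to0$ and $\varepsilon\to0$ and show that the constrained supremum is recovered. Restricting to paths with $F_t(f)$ bounded below forces $I_t(f)$ to be bounded, so these paths form a compact, equicontinuous family. On this family $s\mapsto F_s(f)$ is uniformly continuous, with a modulus controlled via the superlinearity \eqref{condL}. Hence a grid constraint $\ge-\varepsilon$ implies $F_s(f)\ge-2\varepsilon$ for all $s$ once $\delta$ is small. Finally, upper semicontinuity and compactness give
\[
\lim_{\varepsilon\to0}\sup\{F_t(f):\ f\in A,\ F_s\ge-\varepsilon\ \forall s\}=\sup\{F_t(f):\ f\in A,\ F_s\ge0\ \forall s\}.
\]
We expect this step, and the uniformity of the LDP in the starting point needed in Step 1, to be the main technical obstacle. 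Continuity of $F_s$ at non-absolutely-continuous paths in the Skorohod topology needs care; we would handle it by working only with the sublevel sets of $I_t$, which lie in $C[0,t]$.
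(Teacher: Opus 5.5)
\textbf{The gap is in your Step 2.} The sets $B_j=\{f\in A:\ F_{s_j}(f)\le-\varepsilon\}$ are sublevel sets of $F_{s_j}$. That function is only upper semicontinuous (through $-I_{s_j}$), so $B_j$ is not closed. Step 1, however, controls $\E N^{K,C}_{s_j}$ only for closed $C$, and only through $\sup_C F_{s_j}$. A closed neighbourhood of $(B_j)_{s_j}$ does not help. Paths that are not absolutely continuous have $F_{s_j}=-\infty$ and are uniformly dense, so the closure of $B_j$ typically contains paths of $A$ with large $F_{s_j}$, and you get no negative exponent.

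Moreover, the conclusion you draw is false. The rescaled lineages $X^{K,v}_{(\cdot\wedge t)\log K}$ are step functions. Hence $I_{s_j}=+\infty$ and $F_{s_j}=-\infty$ for every lineage in $A$ that has mutated before time $s_j\log K$, so essentially every counted lineage lies in $B_j$. The identity $N^{K,A}_t=N^{K,A^{\ge}}_t$ w.h.p.\ would then mean that only mutation-free lineages are counted. This is incompatible with Theorem~\ref{thm:borneinf} applied to a small ball around a non-constant path. The functional $F$ cannot be evaluated on the microscopic lineages; the constraint must be transferred to the particle system through neighbourhoods of limit paths.

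\textbf{The missing argument (the paper's route).}
\begin{enumerate}
\item Reduce to compact $A$. Lemma~\ref{lem:expo-tight}, the many-to-one formula and Borel--Cantelli give a compact $C_{M_0}$ with $N^{K,C_{M_0}^c}_t=0$ eventually. Your proposal has no such reduction for the random counts, and you need it to get finite covers.
\item For each $f$ in the compact set, choose $\delta_f>0$ such that $F_s(g)\le F_s(f)+1/k$ for all $s\in[0,t]$ and all $g\in\mathrm{cl}(B_{\delta_f}(f))$. This is an upper semicontinuity uniform in $s$, proved by contradiction from the lower semicontinuity of $I$ and $R\le\bar R$. Then extract a finite subcover by balls $B_{\delta_i}(f_i)$.
\item Classify the balls by their centres, not the paths by their own $F$-values. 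If $F_s(f_i)<-1/k$ for some $s$, Proposition~\ref{prop:BS1} at time $s$ on the closed ball gives a negative exponent. The integer-valued count is then $0$ at time $s\log K$ with high probability, hence also at time $t\log K$. Otherwise the ball contributes an exponent at most $F_t(f_i)+1/k$.
\item Let $k\to\infty$, using compactness and upper semicontinuity as in your Step 3.
\end{enumerate}
No time grid is needed.

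\textbf{A minor point in Step 3.} The map $s\mapsto F_s(f)$ is not equicontinuous on $\{I_t\le M\}$: the action can concentrate on short time intervals while the path moves by $o(1)$. You only need the one-sided bound $F_s(f)\ge F_{s'}(f)-\bar R\,(s'-s)$ for $s<s'$, which follows from $L\ge 0$.
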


\begin{theorem}
\label{thm:borneinf}
    For any $t>0$ and any open set $G\subset \mathbb{D}[0,t]$, we have, 
    almost surely,
\[
 \liminf_{K\rightarrow +\infty}   \frac{1}{\log K}\log N^{K,G}_{t} \geq \sup\{F_t(f); f\in G, \; \forall s\in [0,t],\; F_s(f)> 0\}. 
\]
\end{theorem}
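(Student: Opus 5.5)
The plan is to reduce the lower bound to a local statement around a single good path and then to control the number of particles following that path by a first/second moment argument on a tube, following the strategy of \cite{berestyckibrunetharrisharrisroberts,mallein}.

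\textbf{Reduction to a smooth path with slack.} Fix $f\in G$ with $F_s(f)>0$ for all $s\in[0,t]$, and fix $\eta>0$. Since $G$ is open in the Skorohod topology, $G$ contains a uniform tube $\{g:\sup_{[0,t]}|g-f|<\delta\}$ for small $\delta$ around any continuous $f$; if $F_t(f)>-\infty$ then $f\in AC$. I would first approximate $f$ by a piecewise linear (or $C^\infty$) path $\tilde f$ with $\sup|\tilde f-f|<\delta/2$, $F_t(\tilde f)\ge F_t(f)-\eta$ and still $\min_{s}F_s(\tilde f)\ge c>0$. This uses continuity of $R$, $\beta_0$ and convexity/continuity of $L$, via standard mollification of the velocity. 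By compactness of $[0,t]$, the strict positivity $F_s(f)>0$ gives a uniform margin $c$. This margin is exactly what will prevent extinction along the tube.

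\textbf{Time discretization and iteration.} Split $[0,t]$ into $n$ intervals $[t_{k},t_{k+1}]$ of length $t/n$ on which $\tilde f$ is linear. I would show by induction on $k$ that, almost surely for $K$ large, the number of particles at time $t_k\log K$ whose lineage stayed in the $\delta$-tube around $\tilde f$ on $[0,t_k]$ and whose position is within $\delta/4$ of $\tilde f(t_k)$ is at least $K^{F_{t_k}(\tilde f)-k\eta/n}$. The base case comes from the Poisson initial condition: the number of initial individuals in $[\tilde f(0)-\delta/4,\tilde f(0)+\delta/4]$ is Poisson with mean $\ge K^{\beta_0(\tilde f(0))-\eta}$. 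Concentration of Poisson variables with exponentially large mean plus Borel--Cantelli in $K$ make this hold almost surely. Note that $\beta_0(\tilde f(0))=F_0>0$.

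For the induction step, each of the $\ge K^{F_{t_k}-\cdots}$ starting particles launches an independent branching process. By the many-to-one (Feynman--Kac) formula from Section~\ref{sec:LDP}, the expected number of descendants following the tube on $[t_k,t_{k+1}]$ is $K^{\int_{t_k}^{t_{k+1}}R(\tilde f)-\int L(\tilde f,\dot{\tilde f})+o(1)}$. This uses the LDP lower bound for the mutational process, uniformly in the starting point, as established there. If the increment of $F$ is positive, one gets growth. In general, the total expected number over all starting particles is $K^{F_{t_{k+1}}-\cdots}\ge K^{c/2}\to\infty$.

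\textbf{The main obstacle: concentration.} The hard step is turning expectations into almost sure lower bounds, since individual lineages in the tube are rare events and a single subtree may have heavy variance. I would use a second moment (many-to-two) bound on the count of particles staying in a slightly narrower tube. Following \cite{mallein}, I would truncate on the event that no particle in the tube at an intermediate time $s$ has more than $K^{F_s(\tilde f)+\eta}$ tube-descendants, which is controlled by the upper bound machinery of Theorem~\ref{thm:bornesup}. The covariance of two lineages splitting at time $s\log K$ is then bounded by $K^{2F_{t_{k+1}}-F_s+\eta}$. Because $F_s\ge c>0$ along the path, this is $o(K^{2F_{t_{k+1}}})$, so Paley--Zygmund/Chebyshev yields the bound with probability $1-K^{-\kappa}$. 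Summing in the independent starting particles gives exponentially small failure probability in a power of $K$, and Borel--Cantelli along $K\in\N$ gives the almost sure statement. Finally, let $n\to\infty$ and $\eta\to0$, then take the supremum over $f$.
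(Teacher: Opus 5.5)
Your route is viable, but it is organized differently from the paper's. The paper does not proceed stage by stage. It reduces to Lemma~\ref{lemma:tube} and proves that lemma with a single second-moment estimate on the whole tube over $[0,t]$ (Lemma~\ref{lem-appli-fourches}). The fork term is bounded, via Proposition~\ref{prop:forks} and the uniform LDP upper bound, by $K^{2F_t(f)-\inf_s F_s(f)+6\delta'}$. Only $\inf_s F_s(f)\ge 0$ is used there and there is no truncation, so the paper gets merely $\mathbb{E}[N^2]\le K^{8\delta'}\mathbb{E}[N]^2$. The almost sure statement then comes from one boosting at time $0$:
\begin{itemize}
\item the Poisson initial condition is split into $\lfloor K^{\delta''}\rfloor$ i.i.d.\ copies, which is possible because $\min_s F_s(f)>\delta''$, so each copy still satisfies the constraint;
\item Paley--Zygmund gives each copy success probability at least $K^{-9\delta'}$;
\item independence gives the summable failure probability $\exp(-K^{\delta''-9\delta'})$.
\end{itemize}
You instead condition on $\mathcal{F}_{t_k\log K}$ along a time grid and use the margin $c$ to guarantee at least $K^{c/2}$ conditionally independent subtrees at every stage. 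Each stage then has relative variance $K^{-c+O(\eta)}$. This buys a transparent use of the margin: the tube population stays a positive power of $K$ at all times. With a fine mesh, per-subtree relative second moments are only $K^{O(t/n)}$, so even crude fork bounds would suffice. The price is the bookkeeping over $n$ stages, none of which the paper needs: uniform-in-start LDP lower bounds for shifted paths, accumulation of errors, and the piecewise-linear approximation with uniform control of $s\mapsto F_s(\tilde f)$.

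Two points need fixing when you write it out.

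First, drop the truncation. The fork bound $K^{2F_{t_{k+1}}-F_s+\eta}$ you state follows directly from Proposition~\ref{prop:forks}, exactly as for the term $B$ in the paper. If the truncation were instead implemented as a restriction to a global event controlled only in probability by Theorem~\ref{thm:bornesup}, its failure probability would not be summable. Markov's inequality gives only about $K^{-\eta/2}$, and Borel--Cantelli would break.

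Second, the Chebyshev bound $1-K^{-\kappa}$ is not summable on its own for small $\kappa$. You must explicitly split the stage-$k$ starting particles into $K^{\gamma}$ groups, with $\gamma>0$ small compared with $c$. Each group then succeeds with probability at least $1/2$, and a binomial tail bound gives failure probability exponentially small in $K^{\gamma}$. This is the analogue of the paper's $\lfloor K^{\delta''}\rfloor$ splitting, performed at every stage rather than once at time $0$.
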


Note that, by definition of $I_t(f)$, the upper bound in Theorem~\ref{thm:bornesup} is equal to
\[
\sup\{F_t(f); f\in A\cap AC[0,t], \; \forall s\in [0,t],\; F_s(f)\geq 0\}
\]
and similarly for Theorem~\ref{thm:borneinf}.
Some heuristic explanations for these results are given at the end of this section.

\medskip

To link these results to a Hamilton-Jacobi equation, let us define for $a\in \R$,
\beq
\label{def:u-a}
 u_a(t,x)=\sup\, \{F_t(f); f\in \mathrm{AC}[0,t],\, f(t)=x,\, \forall s\in [0,t], \, F_s(f)\geq a\},
 \eeq
 \[
 \widetilde \Omega_a=\big\{ (t,x)\in [0,+\infty)\times \R; \exists f\in AC[0,t], \, f(t)=x,\,  \forall s\in [0,t], \, F_s(f)\geq a \big\}.
 \]
 Notice that it is immediate that 
 \beq
 \label{upos}
 u_a(t,x)\geq a, \quad \text{for all $(t,x)\in \widetilde\Omega_a$},\qquad \mbox{and }\qquad u_a(t,x)=-\infty,\quad \text{for all $(t,x)\in \widetilde\Omega_a^c$,}
 \eeq
 where we used the convention that $\sup\emptyset = -\infty.$\\
 \medskip

 The next results show that the functions $u_a(t,x)$ allow us to characterize the asymptotic density of individuals at $(t,x)$ in the stochastic population process $Z^K$ and are  solutions of some state-constrained Hamilton-Jacobi equations. 
 
 Let us define for all $t\geq 0$, $x\in\R$ and $\delta>0$
\[
A^{x,\delta}_t=\left\{f\in\mathbb{D}[0,t],\, f(t)\in[x-\delta,x+\delta]\right\},
\]
\[
G^{x,\delta}_t=\left\{f\in\mathbb{D}[0,t],\, f(t)\in(x-\delta,x+\delta)\right\}.
\]
Then, we have the following result.
\begin{theorem}
    \label{cor:HJ}
    For all $t\geq 0$ and $x\in\R$, in probability
     \[
     \lim_{a\downarrow 0}u_a(t,x)\leq
    \liminf_{\delta\to 0}\liminf_{K\to\infty}\frac{1}{\log K}\log N^{K,G^{x,\delta}_t}_t\leq\limsup_{\delta\to 0}\limsup_{K\to\infty}\frac{1}{\log K}\log N^{K,A^{x,\delta}_t}_t\leq u_0(t,x).
    \]
Moreover, if $a\mapsto u_a(t,x)$ is continuous at $a=0$, then, in probability 
    \begin{equation}
    \label{est-density-HJ}
            \lim_{\delta\to 0}\lim_{K\to\infty}\frac{1}{\log K}\log N^{K,A^{x,\delta}_t}_t= \lim_{\delta\to 0}\lim_{K\to\infty}\frac{1}{\log K}\log N^{K,G^{x,\delta}_t}_t=u_0(t,x).
    \end{equation}
\end{theorem}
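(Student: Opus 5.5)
The statement follows from Theorems~\ref{thm:bornesup} and~\ref{thm:borneinf} applied to $A^{x,\delta}_t$ (closed in $\D[0,t]$, since $f\mapsto f(t)$ is continuous for the Skorohod topology on $\D[0,t]$; the endpoint is fixed by every time change) and to $G^{x,\delta}_t$ (open, for the same reason). Two further ingredients are needed: a continuity argument in $\delta$ at the level of the variational problems, and the monotonicity $u_a\le u_{a'}$ for $a\ge a'$.

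\emph{Upper bound.} Theorem~\ref{thm:bornesup} gives, in probability,
\[
\limsup_K \tfrac{1}{\log K}\log N^{K,A^{x,\delta}_t}_t\le U(\delta):=\sup\{F_t(f);\, f\in AC[0,t],\, |f(t)-x|\le\delta,\, F_s(f)\ge0\ \forall s\le t\}.
\]
$U$ is nondecreasing in $\delta$ and $U(\delta)\ge u_0(t,x)$, so it remains to show $\lim_{\delta\to0}U(\delta)\le u_0(t,x)$. I would take $\delta_n\to0$ and near-maximizers $f_n$ with $F_t(f_n)\ge U(\delta_n)-1/n$.

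If $\lim U>-\infty$, then $I_t(f_n)$ is bounded, because $\beta_0\le\bar\beta$ and $R\le\bar R$. Superlinearity \eqref{condL} makes $(f_n)$ equi-absolutely-continuous, and the endpoints $f_n(t)$ are bounded, so Arzel\`a--Ascoli gives a uniformly convergent subsequence $f_n\to f$ with $f\in AC[0,t]$ and $f(t)=x$. Lower semicontinuity of $I_s$ (convexity of $L$ in $v$, continuity in $x$) and continuity of $\beta_0$ and of $\int_0^s R(f_r)\,dr$ give $F_s(f)\ge\limsup_n F_s(f_n)\ge0$ for every $s$, and $F_t(f)\ge\lim U$. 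Hence $u_0(t,x)\ge\lim_\delta U(\delta)$. If $\lim U=-\infty$ there is nothing to prove.

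\emph{Lower bound.} Theorem~\ref{thm:borneinf} gives, almost surely and hence in probability, $\liminf_K \tfrac{1}{\log K}\log N^{K,G^{x,\delta}_t}_t\ge \sup\{F_t(f);\, f\in G^{x,\delta}_t,\, F_s(f)>0\ \forall s\}$. For any $a>0$, every $f$ admissible for $u_a(t,x)$ satisfies $f(t)=x$ and $F_s\ge a>0$, so it is admissible here for every $\delta$. Thus the right-hand side is at least $u_a(t,x)$ for all $\delta$ and all $a>0$. Since $u_a$ is nonincreasing in $a$, $\lim_{a\downarrow0}u_a$ exists, and the $\liminf$ in $\delta$ is at least this limit. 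The middle inequality of the statement holds because $G^{x,\delta}_t\subset A^{x,\delta}_t$, so $N^{K,G}\le N^{K,A}$.

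\emph{Equality case.} If $a\mapsto u_a(t,x)$ is continuous at $0$, the two extremes coincide with $u_0(t,x)$. Because $N^{K,G^{x,\delta}_t}_t$ and $N^{K,A^{x,\delta}_t}_t$ are monotone in $\delta$, the $\liminf$ and $\limsup$ in $K$ are then pinched, and \eqref{est-density-HJ} follows, with limits understood in probability in the sense defined above.

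The main obstacle is the compactness step in the upper bound. It requires the tightness, or goodness of the rate function, argument to keep both the constraint $F_s\ge0$ for all $s$ and the value $F_t$ in the limit. This is where upper semicontinuity of $f\mapsto F_s(f)$, uniformly in $s$, has to be checked.
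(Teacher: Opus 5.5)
Your argument is correct and follows the paper's architecture. For the lower bound, both apply Theorem~\ref{thm:borneinf} to the open set $G^{x,\delta}_t$ and use that the admissible paths of $u_a(t,x)$, $a>0$, are admissible there. For the upper bound, both apply Theorem~\ref{thm:bornesup} to the closed set $A^{x,\delta}_t$ and then run a compactness argument as $\delta\to0$.

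The difference lies in that compactness step. The paper takes exact maximizers $f^\delta$ over $\{f\in A^{x,\delta}_t:\ F_s(f)\ge 0\ \forall s\}$, with existence as in Lemma~\ref{lem:optimal-traj}. It then invokes a $W^{1,\infty}$ bound uniform in $\delta$, imported from regularity results for fixed-endpoint problems; this is why it remarks that $f^\delta$ is also optimal for its own endpoint $f^\delta(t)$. You work with near-maximizers instead. Your compactness comes only from the a priori bound on $I_t(f_n)$ and the superlinearity \eqref{condL}, which give equi-integrable derivatives and then Arzel\`a--Ascoli. You pass to the limit by lower semicontinuity of $I_s$. Your route is more self-contained: it needs neither existence of maximizers nor Lipschitz regularity of optimal paths, and it treats the case where the constrained set is empty explicitly. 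The paper's route gives stronger compactness, at the price of relying on external results.

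Your check that $A^{x,\delta}_t$ is closed and $G^{x,\delta}_t$ open fills in a point the paper leaves implicit: time changes fix $t$, so $|f(t)-g(t)|\le\dSko(f,g)$.

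The obstacle you flag at the end is not one. In the limit you only need $F_s(f)\ge\limsup_n F_s(f_n)$ for each fixed $s$, that is, pointwise upper semicontinuity of $F_s$. Uniformity in $s$ is needed in the proof of Theorem~\ref{thm:bornesup} itself, not here.
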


To define the Hamilton-Jacobi equation, we also need to introduce the set
  \[
  \Omega_a=\{(t,x)\in \widetilde\Omega_a \, |\, u_a(t,x)> a\},
 \]
     
\begin{theorem} 
\label{prop:var-to-visc}
The set $\Omega_a$ is an open set. The function $u_a$ belongs to $C(\adh(\Omega_a))$ and it is the unique locally Lipschitz-continuous and bounded above viscosity solution of the following Hamilton-Jacobi equation  
 \beq
 \label{eq:HJ}
 \begin{cases}
 \p_t u=p(x) H(\p_x u)+R(x),& (t,x)\in \Omega_a\\
 u(t,x)=a,& (t,x)\in \p  \Omega_a , \, t>0,\\
 u(0,x)=\beta_0(x), &\text{for all $x$ s.t $\beta_0(x)>a$}.
 \end{cases}
 \eeq
 
  \end{theorem}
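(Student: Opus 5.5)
The plan is to treat $u_a$ as the value function of a state-constrained optimal control problem and to go through the classical dynamic-programming route. The Lagrangian is $-L(x,v)+R(x)$, the initial cost is $\beta_0$, and trajectories are constrained by $F_s(f)\geq a$ along the path. The Legendre transform identity $\sup_v\big(qv-L(x,v)\big)=p(x)H(q)$ follows from the convexity of $H$ and the definition \eqref{eq:def-L}. This identity is what turns the dynamic programming principle into \eqref{eq:HJ}.

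\textbf{Step 1: dynamic programming principle.} For $(t,x)\in\Omega_a$ and small $h>0$, I would first prove
\[
u_a(t,x)=\sup\Big\{u_a(t-h,f(t-h))+\int_{t-h}^t \big(R(f_s)-L(f_s,\dot f_s)\big)ds\Big\},
\]
where the supremum is over curves $f$ with $f(t)=x$ and $(t-h,f(t-h))\in\widetilde\Omega_a$. The intermediate constraint is automatic locally, because $u_a(t,x)>a$ and the Lagrangian is bounded on bounded velocities. This is proved by concatenating optimal or near-optimal curves.

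\textbf{Step 2: existence and Lipschitz bounds for optimal curves.} Here I would use the superlinearity \eqref{condL}, the bound $R\le\bar R$, and $\beta_0\le\bar\beta-\alpha|x|$. These give a priori bounds on $\int|\dot f|$ for near-maximizers, so that $u_a\le \bar\beta+\bar R t$ is bounded above. Tonelli's lower semicontinuity of $I_t$ then gives existence of maximizers. The constraint $F_s\ge a$ is closed under uniform convergence by upper semicontinuity of $F_s$, so the constraint passes to the limit.

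\textbf{Step 3: openness and local Lipschitz continuity.} Suppose $(t,x)\in\Omega_a$ with optimal curve $f$. Perturb it by $f+\frac{s}{t}(y-x)$ for $y$ close to $x$, and rescale time to change $t$. The local Lipschitz continuity of $b,d,p,\beta_0$, hence of $L$ on compact sets, shows that $F_s$ changes by $O(|y-x|+|t'-t|)$ uniformly in $s$. Two consequences follow. First, the constraint $F_s\ge a$ survives provided it is strict at $s=t$, and $F_t(f)=u_a(t,x)>a$. Second, $u_a>a$ nearby. This gives both openness of $\Omega_a$ and the local Lipschitz bound. The main obstacle is intermediate times $s<t$ where $F_s(f)=a$ is touched. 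At such times a perturbation could break the constraint. To handle this I would restart the curve from a near-optimal curve that is uniformly interior. Concretely, I would replace $f$ on $[0,s]$ by curves with slightly larger $F$, using the fact that $a$ is attained only at points where staying still gives growth, or by shifting by small margins. If that fails, I would instead use the monotone approximation $u_{a-\eta}\downarrow$ and stability.

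\textbf{Step 4: viscosity solution, boundary values and uniqueness.} The sub- and supersolution properties in $\Omega_a$ come from the dynamic programming principle of Step 1 with constant-velocity test curves and the Legendre identity. Continuity up to $\adh(\Omega_a)$ with value $a$ on $\p\Omega_a$ follows from Step 3 and \eqref{upos}. The initial condition holds because $u_a(h,x)\to\beta_0(x)$, using constant curves and the superlinear bounds. For uniqueness among locally Lipschitz solutions that are bounded above, I would run a comparison principle by doubling variables. The Dirichlet condition $u=a$ on $\p\Omega_a$ serves as the boundary. The growth at infinity is controlled by penalizing with $\e|x|$, which is possible thanks to the upper bound and the coercivity of $H$. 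An alternative route to uniqueness is to verify that any such solution equals the value function, via a verification or optimality argument along characteristics.
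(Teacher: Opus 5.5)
Your overall plan (dynamic programming, Tonelli existence of maximizers, perturbation of optimal curves for openness and Lipschitz bounds, the viscosity property via the Legendre identity, comparison for uniqueness) is the same as the paper's. However, Step~3 as written does not go through. You identify the obstruction but do not remove it.

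A global perturbation $f+\frac{s}{t}(y-x)$, combined with a global time rescaling, moves $F_s$ by $O(|y-x|+|t'-t|)$ at \emph{every} $s$. So at an intermediate time where $F_s(f)=a$, the perturbed curve can violate the constraint. You then have no admissible curve ending at $(t',y)$, and neither openness of $\Omega_a$ nor $u_a(t',y)\geq u_a(t,x)-C(|t'-t|+|y-x|)$ follows. Your proposed remedies do not close this:
\begin{itemize}
\item Nothing forces the level $a$ to be touched only where ``staying still gives growth''. At an interior binding time you only get $R(f_s)=L(f_s,\dot f_s)\geq 0$.
\item A curve with $F>a$ on $[0,s]$ reaching the same point need not exist, e.g.\ when $u_a(s,f(s))=a$.
\item The monotone limit $u_{a-\eta}\downarrow u_a$ transfers openness or Lipschitz bounds only if you already have them uniformly for $u_{a-\eta}$. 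But the maximizers of $u_{a-\eta}$ can touch the level $a-\eta$ in exactly the same way.
\end{itemize}

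The missing idea is to localize the modification at the terminal time, which is what Lemma~\ref{lem:ucont} does. Since $s\mapsto F_s(f)$ is continuous and $F_t(f)=u_a(t,x)>a$, the constraint is slack on a final time window. Set $\delta=|t-t'|+|x-y|$, keep $\overline f=f$ on $[0,t'-\delta]$, and join $f(t'-\delta)$ to $y$ linearly on $[t'-\delta,t']$. Then:
\begin{itemize}
\item $F_s(\overline f)=F_s(f)\geq a$ for $s\leq t'-\delta$, so binding times are never touched.
\item On the last window, $F_s(\overline f)\geq u_a(t,x)-C\delta>a$, because $L\geq 0$, $R\leq \bar R$ and $|\dot{\overline f}|$ is bounded.
\end{itemize}
This yields openness, lower semicontinuity, and, by exchanging the roles of the two points inside a small ball where $u_a$ stays above $a$, the local Lipschitz bound.

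Bounding $|\dot{\overline f}|=|y-f(t'-\delta)|/\delta$ needs $\dot f\in L^\infty$ near $t$. The same is true of your $O(|y-x|+|t'-t|)$ claim. What is required is the $W^{1,\infty}$ bound on optimal curves, uniform on compacts, of Lemma~\ref{lem:optimal-traj}. Your Step~2 only gives $L^1$ control of $\dot f$ from superlinearity, so this bound must be proved or imported.

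Finally, the value $a$ on $\partial\Omega_a$ does not come from Step~3. It comes from an upper semicontinuity argument using compactness of maximizers: limits of optimal curves for $(t_n,x_n)\to(\bar t,\bar x)$ remain admissible.
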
The proof of Theorem \ref{prop:var-to-visc} is postponed to Section \ref{section:link-variational-HJ}.\\
  
   Notice  that $u_a$ defined by \eqref{def:u-a}, satisfies a state constraint boundary condition in $\Omega_a$ (\cite[Section 5.1.3]{GB:94}), i.e. $F_s(f)$ has to remain larger than $a$ for all $s\in [0,t]$ in the variational formula~\eqref{def:u-a}. Moreover, $u_a=-\infty$ in $\widetilde{\Omega}^c_a$ and $u_a=a$ in $\widetilde{\Omega}_a\setminus{\Omega_a}$. We lastly show in the following lemma that, for a.e. $a$, $\widetilde{\Omega}_a\setminus{\Omega_a}$ is a Lebesgue-null set and $u_a$ is right continuous with respect to $a$. For this, let us define
  \[
 \Gamma_{a_0}=\bigcup_{a> a_0} \displaystyle\widetilde \Omega_a  .
 \]
One can verify that, for all $a_1,\, a_2\in \R$, with $a_1<a_2$, we have
 \[
 \widetilde{\Omega}_{a_2}\subset  \widetilde{\Omega}_{a_1},\quad   {\Omega_{a_2}}\subset  {\Omega_{a_1}},\quad  u_{a_2}(\cdot,\cdot)\leq u_{a_1}(\cdot,\cdot),
 \]
 and for all $a_0\in \R$,
 \begin{equation}
 \label{cont-ua-Omegac}
   \Gamma_{a_0}\subset  \Omega_{a_0}\subset\widetilde \Omega_{a_0}, \quad\text{and for all $(t,x)\in \widetilde \Omega_{a_0}^c$,} \qquad \lim_{a\downarrow a_0}u_a(t,x)=u_{a_0}(t,x)=-\infty.   
 \end{equation}


 \begin{lemma}
 \label{lem:ua-cont}
   For almost every $a_0\in \R$, we have
      \begin{equation}
   \label{eq:cont-Omega}
          \int_{\R^+\times\R} \mathds{1}_{\widetilde \Omega_{a_0}\setminus\Gamma_{a_0}}(t,x)dtdx=0,
   \end{equation}
   and
   \begin{equation}
       \label{ua-cont}
          \forall (t,x)\in \Gamma_{a_0}\cup \widetilde \Omega_{a_0}^c,\qquad \lim_{a\downarrow a_0} u_a(t,x)=u_{a_0}(t,x).
   \end{equation}
 \end{lemma}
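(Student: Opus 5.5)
The plan is a monotonicity/countability argument in the parameter $a$. The sets $\widetilde\Omega_a$ decrease as $a$ increases, and the sets $\widetilde\Omega_{a_0}\setminus\Gamma_{a_0}$, for $a_0$ ranging over $\R$, are pairwise disjoint. Indeed, take $a_0<a_1$ and $(t,x)\in\widetilde\Omega_{a_1}$. Choose any $a\in(a_0,a_1)$; then $\widetilde\Omega_{a_1}\subset\widetilde\Omega_a\subset\Gamma_{a_0}$, so $(t,x)\notin\widetilde\Omega_{a_0}\setminus\Gamma_{a_0}$. Since $\widetilde\Omega_{a_1}\setminus\Gamma_{a_1}\subset\widetilde\Omega_{a_1}$, the two difference sets are disjoint.

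To prove \eqref{eq:cont-Omega}, I will restrict to a bounded window $[0,T]\times[-M,M]$ (or use a finite weight such as $e^{-t-|x|}dtdx$). A family of pairwise disjoint sets of finite total measure can contain at most countably many sets of positive measure: for each $n$, at most finitely many have measure $>1/n$.

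This requires measurability of $\widetilde\Omega_a$ and $\Gamma_{a_0}$. Here I would use $\widetilde\Omega_a=\{u_a\ge a\}$ together with the continuity of $u_a$ on $\adh(\Omega_a)$ from Theorem \ref{prop:var-to-visc}. A more robust route is to show directly that $\widetilde\Omega_a$ is closed. Take $(t_n,x_n)\to(t,x)$ and admissible $f_n$. The lower bound in \eqref{condL}, together with $F_s(f_n)\ge a$, $R\le\bar R$ and $\beta_0\le\bar\beta$, bounds $I_{t_n}(f_n)$ uniformly. This gives equi-integrability of $\dot f_n$, hence compactness in $C$. Lower semicontinuity of $I$ and continuity of $\beta_0$ and $R$ then pass the constraint $F_s\ge a$ to the limit, after extending or truncating the paths to the common time $t$. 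Given this, $\Gamma_{a_0}$ is a countable union of closed sets, taking $a=a_0+1/n$, so it is Borel. I expect this closedness/compactness step to be the main technical obstacle.

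For \eqref{ua-cont}, the case $(t,x)\in\widetilde\Omega_{a_0}^c$ is \eqref{cont-ua-Omegac}. On $\Gamma_{a_0}$, set $\ell=\lim_{a\downarrow a_0}u_a(t,x)\le u_{a_0}(t,x)$; this limit exists by monotonicity and is finite, being at least $a$ for some $a>a_0$. To show $\ell\ge u_{a_0}(t,x)$, I take $f$ admissible for $a_0$ with $F_t(f)\ge u_{a_0}(t,x)-\eta$. I also take $g$ admissible for some $a_1>a_0$, ending at $x$, which exists because $(t,x)\in\Gamma_{a_0}$.

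The naive idea is a convex combination $h=\lambda f+(1-\lambda)g$. However, $F_s$ is not concave because $R$ and $\beta_0$ are only Lipschitz, so this does not work directly. Instead I would use a monotone-function argument. The map $a\mapsto u_a(t,x)$ is nonincreasing, so it is right-continuous except at countably many $a_0$, and this holds for each fixed $(t,x)$. To get a single null set of $a_0$ valid for all points, I would apply this to the countable dense set of rational $(t,x)$ in $\Gamma_{a_0}$. I would then transfer to all of $\Gamma_{a_0}$ using local Lipschitz continuity in $(t,x)$ of $u_a$ on $\Omega_a$, uniformly in $a$ near $a_0$ (Theorem \ref{prop:var-to-visc}, with constants controlled by the bounds on $L$ and $R$). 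Note that $\Gamma_{a_0}$ is open, being contained in the union of the open sets $\Omega_a$. Obtaining this uniform-in-$a$ Lipschitz control is the second delicate point; if it fails, I would fall back on a direct path-concatenation argument. That argument would follow $f$ up to a time close to $t$ and then use the slack of $g$ in a small final interval, paying a cost of order the time elapsed.
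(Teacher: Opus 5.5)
Your proof is correct. It shares the paper's mechanism: monotonicity in $a$ leaves only countably many bad $a_0$, and regularity in $(t,x)$ that is uniform in $a$ upgrades this to a pointwise statement. The execution differs, however.

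For \eqref{eq:cont-Omega}, the paper uses continuity at a.e.\ $a_0$ of the nonincreasing map $a\mapsto\int_{\mathcal K}\mathds{1}_{\widetilde\Omega_a}$. Your disjointness of the sets $\widetilde\Omega_{a_0}\setminus\Gamma_{a_0}$ is the same fact, since the right jump of that map at $a_0$ is exactly the measure of $(\widetilde\Omega_{a_0}\setminus\Gamma_{a_0})\cap\mathcal K$. You also settle measurability, which the paper skips. Your closedness argument for $\widetilde\Omega_a$ is essentially the compactness argument of Lemma~\ref{lem:optimal-traj} and Lemma~\ref{lem:ucont}(ii). Add that $f_n(0)$ stays bounded: $L\ge 0$ gives $\beta_0(f_n(0))\ge a-\bar R t_n$, and then \eqref{as:beta0} applies.

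For \eqref{ua-cont}, the paper integrates $\mathds{1}_{\widetilde\Omega_a}u_a$ over compacts, gets a.e.\ equality between $u_{a_0}$ and the monotone limit $v$, and concludes by continuity of both on $\Gamma_{a_0}$. You instead use pointwise monotonicity at rational points and transfer by a Lipschitz bound. Your version needs no integrability. It also avoids a defect in the paper's argument: $\mathds{1}_{\widetilde\Omega_a}u_a$ is not monotone in $a$ when $u_a$ can be negative (i.e.\ for $a<0$), which would need fixing by a shift. Build the exceptional set from all rational points $q$ of $[0,\infty)\times\R$, treating $a\mapsto u_a(q)$ as $[-\infty,\infty)$-valued, so that the set does not depend on $a_0$.

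Both proofs rest on the same key input, a local Lipschitz bound that is uniform in $a$. It is stated in Lemma~\ref{lem:ucont}, not in Theorem~\ref{prop:var-to-visc}. It applies here because, for a closed ball $\overline B\subset\Omega_{a_1}$ around $(t,x)$, one has $u_a-a\ge u_{a_1}-a_1\ge c>0$ on $\overline B$ for every $a\in[a_0,a_1]$. So the margin in that lemma's concatenation argument is uniform. Openness of $\Gamma_{a_0}$ comes from the equality $\Gamma_{a_0}=\bigcup_{a>a_0}\Omega_a$, not from an inclusion alone: if $(t,x)\in\widetilde\Omega_a$ and $a'\in(a_0,a)$, then $u_{a'}(t,x)\ge a>a'$.

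Drop the fallback, though. Modifying $f$ only on a final interval cannot restore the constraint at interior times where $F_s(f)=a_0$. Right-continuity in $a$ genuinely can fail at particular values of $a_0$, which is why the remark after the lemma perturbs the initial condition to avoid $a_0=0$. If all near-optimal paths for $a_0$ dip to exactly $a_0$ at some interior time, while paths with slack end much lower, then $u_a(t,x)$ jumps down as soon as $a>a_0$. Any argument that does not exploit the ``almost every'' would therefore be proving a false statement.
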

The proof is also deferred to Section \ref{section:link-variational-HJ}.

\begin{rem}Notice that Lemma \ref{lem:ua-cont} is satisfied for almost every $a_0$, but might fail for $a_0=0$. We explain here that up to perturbing slightly the initial condition, we always have that 
for all $(t,x)\in \Gamma_0\cup \Omega_0^c$ and in particular for a.e. $(t,x)$, in probability
      \[
    \lim_{\delta\to 0}\lim_{K\to\infty}\frac{1}{\log K}\log N^{K,A^{x,\delta}_t}_t=u_0(t,x).
    \]The exact formulation will appear in \eqref{eq:perturbation}. To precise this statement, let us define for any $f\in AC[0,t]$
    \[
F_t^\mu(f)=\beta_0(f(0))-\mu+ \int_0^t (b+p-d)(f(s)) ds - I_t(f),
\]
\beq
\label{def:u-mu}
 u_0^\mu(t,x)=\sup\, \{F_t^\mu(f); f\in \mathrm{AC}[0,t],\, f(t)=x,\, \forall s\in [0,t], \, F_s^\mu(f)\geq 0\},
 \eeq
 \[
 \widetilde \Omega_0^\mu=\big\{ (t,x)\in [0,+\infty)\times \R; \exists f\in AC[0,t], \, f(t)=x,\,  \forall s\in [0,t], \, F_s^\mu(f)\geq 0 \big\},
 \]
 \[
  \Omega_0^\mu=\{(t,x)\in \widetilde\Omega_0^\mu \, |\, u_0^\mu(t,x)> 0\},
 \]
 \[
 \Gamma_{0}^\mu=\bigcup_{a> a_0} \displaystyle\widetilde \Omega_a^\mu  . 
 \]
 One can verify that
 \[
 \widetilde \Omega_0^\mu=\widetilde\Omega_\mu,\quad 
    \Omega_0^\mu= \Omega_\mu,\quad 
    \Gamma_0^\mu=\Gamma_\mu,\quad u_0^\mu(t,x)=u_\mu(t,x)-\mu.
 \]
 Then, Lemma \ref{lem:ua-cont} implies that for almost every $\mu_0$ we have
 \[
 \forall (t,x)\in \Gamma_0^{\mu_0}\cup \Omega_{0}^{\mu_0},\qquad \lim_{\mu\downarrow \mu_0} u_0^\mu(t,x)=u_0^{\mu_0}(t,x).
 \]
 Let us also define
 \[
N^{K,A,\mu}_t:=\langle \widetilde{\Theta}^{K,\mu}_t,\mathbbm{1}_{A_t}\rangle,
\]
where $\widetilde{\Theta}_t^{K,\mu}$ is the historical birth-death process defined in \eqref{histo}  and $A_t$ defined in \eqref{At}, such that $Z_0^{K,\mu}$ is a Poisson point measure in $\R$ with intensity measure
$K^{\beta_0^K(x)-\mu}dx$. We finally deduce from Theorem~\ref{cor:HJ} that for a.e. $\mu$ and all $(t,x)\in \Gamma_0^{\mu}\cup \Omega_{0}^{\mu}$, and in particular for a.e. $(t,x)\in \R^+\times\R$,  in probability
 \beq\label{eq:perturbation}
    \lim_{\delta\to 0}\lim_{K\to\infty}\frac{1}{\log K}\log N^{K,A^{x,\delta}_t,\mu}_t=u_0^\mu(t,x). 
    \eeq \hfill $\square$
 \end{rem}

Theorems \ref{thm:bornesup} and \ref{thm:borneinf} also allow us to identify the typical lineages of the population. 
As we saw in Theorem \ref{cor:HJ}, the density of individuals of trait $x$ at time $t$ can be estimated as in \eqref{est-density-HJ} with
\[
u_0(t,x)=  \sup\, \{F_t(f); f\in \mathrm{AC}[0,t],\, f(t)=x,\, \forall s\in [0,t], \, F_s(f)\geq 0\}.
\]
Let us assume that $f_o$ is an optimal trajectory where the supremum above is attained. It is proved in Lemma \ref{lem:optimal-traj} that such a trajectory necessarily exists. Assume also that $F_s(f_o)>0$ for all $s\in [0,t]$ so that $u_0(t,x)= \lim_{a\downarrow 0}u_a(t,x)$. Then, Theorems \ref{thm:bornesup} and \ref{thm:borneinf} imply that a subpopulation of comparable size (in the logarithmic scale) to the population size close to $(t,x)$ has indeed followed the trajectory $f_o$. More precisely, let us define 
$$
A_{\delta,f_o}=\{\gamma\in \mathbb{D}[0,t]; d_{\text{Sko}}\big(\gamma,f_o)\leq \delta\},
$$
$$
G_{\delta,f_o}=\{\gamma\in \mathbb{D}[0,t]; \dSko(\gamma,f_o)< \delta\}.
$$
Then,  Theorems \ref{thm:bornesup} and \ref{thm:borneinf} imply that,  
in probability 
\[
 \limsup_{K\rightarrow +\infty}   \frac{1}{\log K}\log N^{K,A_{\delta,f_o}}_{t} \leq \sup\{F_t(f); f\in A_{\delta,f_o}, \; \forall s\in [0,t],\; F_s(f)\geq 0\}
\]
and
\[
 \liminf_{K\rightarrow +\infty}   \frac{1}{\log K}\log N^{K,G_{\delta,f_o}}_{t} \geq \sup\{F_t(f); f\in G_{\delta,f_o}, \; \forall s\in [0,t],\; F_s(f)> 0\}. 
\]
Letting $\delta\to 0$ we then obtain, in probability
\[
 u_0(t,x) \leq \liminf_{\delta\to 0}\liminf_{K\rightarrow +\infty}   \frac{1}{\log K}\log N^{K,G_{\delta,f_o}}_{t} \leq   \limsup_{\delta\to 0}\limsup_{K\rightarrow +\infty}\frac{1}{\log K}\log N^{K,A_{\delta,f_o}}_{t}\leq  u_0(t,x).
\]
where we have used  $u_0(t,x)=F_t^{f_o}$. We hence deduce the following result.
\begin{cor}
\label{cor:lineage}
Let $(t,x)\in (0,+\infty)\times \R$ and $f_o$ be an optimal trajectory such that $f_o(t)=x$ and $u_0(t,x)=F_t(f_o)$. Assume also that $F_s(f_o)>0$ for all $s\in [0,t]$. We then have in probability 
\[
 \lim_{\delta\to 0}\lim_{K\rightarrow +\infty}   \frac{1}{\log K}\log N^{K,G_{\delta,f_o}}_{t} =  \lim_{\delta\to 0}\lim_{K\rightarrow +\infty}\frac{1}{\log K}\log N^{K,A_{\delta,f_o}}_{t}=  u_0(t,x).
\]
\end{cor}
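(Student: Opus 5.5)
We derive the corollary from Theorems~\ref{thm:bornesup} and~\ref{thm:borneinf}, applied to the closed tube $A_{\delta,f_o}$ and the open tube $G_{\delta,f_o}$ around $f_o$, and then let $\delta\to0$. Since $G_{\delta,f_o}\subset A_{\delta,f_o}$, we have $N^{K,G_{\delta,f_o}}_t\le N^{K,A_{\delta,f_o}}_t$ pointwise. It therefore suffices to prove two bounds:
\[
\liminf_{K}\tfrac{1}{\log K}\log N^{K,G_{\delta,f_o}}_t\ge u_0(t,x)
\]
for every $\delta>0$ (almost surely, hence in probability), and
\[
\limsup_{\delta\to0}\limsup_K \tfrac{1}{\log K}\log N^{K,A_{\delta,f_o}}_t\le u_0(t,x)
\]
in probability.

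\emph{Lower bound.} Take $f_o$ itself as a competitor in the supremum of Theorem~\ref{thm:borneinf} with $G=G_{\delta,f_o}$. Clearly $\dSko(f_o,f_o)=0<\delta$, and by assumption $F_s(f_o)>0$ for all $s\in[0,t]$. Since $u_0(t,x)=F_t(f_o)$ is finite, $f_o\in AC[0,t]$. Hence the supremum is at least $F_t(f_o)=u_0(t,x)$, and Theorem~\ref{thm:borneinf} gives the lower bound for each fixed $\delta$.

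\emph{Upper bound.} Set $S_\delta=\sup\{F_t(f): f\in A_{\delta,f_o},\ F_s(f)\ge0\ \forall s\in[0,t]\}$. Theorem~\ref{thm:bornesup} gives $\limsup_K\frac1{\log K}\log N^{K,A_{\delta,f_o}}_t\le S_\delta$ in probability, so it remains to show $\limsup_{\delta\to0}S_\delta\le u_0(t,x)$. The difficulty is that a competitor $f\in A_{\delta,f_o}$ only satisfies $|f(t)-x|\le\delta$ roughly (Skorohod time changes fix the endpoints, so in fact $|f(t)-x|\le\delta$ exactly). Hence $S_\delta\le \sup_{|y-x|\le\delta}u_0(t,y)$, and we need upper semicontinuity of $u_0(t,\cdot)$ at $x$.

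I would prove this directly by compactness. Take $\delta_n\to0$ and near-optimal $f_n$ with $F_t(f_n)\ge S_{\delta_n}-1/n$. Since $\beta_0$ and $R$ are bounded above, $F_t(f_n)$ is bounded below (by $u_0(t,x)-1$, say), and $F_t\le \bar\beta+t\bar R-I_t$, the actions $I_t(f_n)$ are bounded. By the superlinearity \eqref{condL}, the $f_n$ are equi-absolutely continuous. Because $f_n(t)\to x$, they are also uniformly bounded, so by Arzelà--Ascoli $f_n\to g$ uniformly along a subsequence, with $g(t)=x$.

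Lower semicontinuity of $I_s$ under uniform convergence (standard for convex superlinear $L$ continuous in $x$; it is the goodness of the rate function from Section~\ref{sec:LDP}), together with continuity of $\beta_0$ and $R$, gives $\limsup_n F_s(f_n)\le F_s(g)$ for every $s$. Thus $F_s(g)\ge0$ for all $s\in[0,t]$, so $g$ is admissible in the definition of $u_0(t,x)$. Consequently $\limsup_n S_{\delta_n}\le F_t(g)\le u_0(t,x)$.

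This compactness and semicontinuity step is the main technical point. The upper-semicontinuity inequality must pass to the limit inside the constraint at every $s$, which the pointwise bound $\limsup F_s(f_n)\le F_s(g)$ handles. Combining the two bounds, the two limits are sandwiched between $u_0(t,x)$ and $u_0(t,x)$, which proves the corollary.
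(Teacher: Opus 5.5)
Your proof is correct and follows the paper's own argument: you apply Theorem~\ref{thm:borneinf} to the open tube with $f_o$ itself as the competitor, apply Theorem~\ref{thm:bornesup} to the closed tube, and then let $\delta\to0$. The paper leaves the $\delta\to0$ limit in the upper bound implicit. Your compactness argument (near-optimizers, equi-integrability from superlinearity, Arzel\`a--Ascoli, lower semicontinuity of $I_s$) fills it in, mirroring in slightly more elementary form the paper's proof of the upper bound in Theorem~\ref{cor:HJ}; alternatively, your observation $A_{\delta,f_o}\subset A^{x,\delta}_t$ lets you quote that theorem directly.
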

Thus any optimal trajectory $f_o$ can be interpreted as the ancestral lineage of a large part of the population having trait $x$ at time $t$.\\

Let us end this section with some comments. As explained before, the quantity 
\[\beta^{K,A}_t=\frac{1}{\log K} \log N^{K,A}_t\]
is the exponent in $K$ of the number of particles $N^{K,A}_t$, in the sense that $N^{K,A}_t=K^{\beta^{K,A}_t}$. The evolution in time of the number of particles around the path $f$ is approximately $K^{F_t(f)}$. This says that the exponent, starting from $\beta_0(f(0))$, changes according to the births and deaths along this path as $\int_0^t R(f_s)\ ds$. For comparison, remember that for a branching process without mutation and with constant growth rate $R$, $\E(N^{K,A}_t)=\E(N^{K,A}_0)e^{R t}$. The penalization by $-I_t(f)$, as we will see later, comes from the fact that the probability for an ancestral lineage to be around $f$ is of the order of $K^{-I_t(f)}$. The state constraint boundary condition tells that only paths $f$ such that $F_s(f)>0$
for all $s\in [0,t]$ are admissible: the population gets extinct on the way otherwise. Notice also that the assumption that $F_s(f)>0$ for all $s\in [0,t]$ is not contradictory with the fact that for some $s$ we may have $R(f(s))<0$.

\subsection{Comparison with previous works}
\label{sec:biblio}

Our main results take the form of large deviation estimates on population sizes. This question for spatial branching Markov processes for large time goes back to~\cite{biggins1995} and has been studied by several authors, notably~\cite{berestyckibrunetharrisharrisroberts}, from which a large part of our work is inspired. These works deal with branching Brownian motions on the line and aim at describing the particles that constitute the right front.
The method of~\cite{berestyckibrunetharrisharrisroberts} is based on additive martingales for the branching process and the spine decomposition of~\cite{hardyharris,marguet}. We use a similar approach to obtain large deviations upper bounds on the branching population size. Many other works dealing more specifically with estimates on the position of the rightmost particle in branching Brownian motion used methods based on moment estimates~\cite{fangzeitouni,zeitouni}. Similar questions for general branching random walks are studied in \cite{mallein} and are thus closer to our work. Our proof of the large deviations lower bound on the branching population size is inspired from these works, although we focus on a different question dealing with the study of local population densities in a model with inhomogeneous space dependence of rates. In \cite{maillardraoultourniaire,desmaraisschertzertalyigas}, spatial birth-death processes with interaction are used to study the effect of population sizes and demographic stochasticity on the speed of invasion fronts under different scalings. {In particular, in \cite[section 1.2]{maillardraoultourniaire} a conjecture has been provided, involving a variational formulation closely related to what we obtain at the limit in Theorems \ref{thm:bornesup} and \ref{thm:borneinf}. This variational problem features a different Lagrangian, which requires a different scaling of the problem.  }\\

  Hamilton-Jacobi equations have been widely used in  the asymptotic analysis of integro-differential equations in  evolutionary biology (see for instance \cite{BP.GB:08,Barles2009,LMP}) but also in the study of propagation phenomena (see e.g. \cite{MF:85,LE.PS:89,GB.LE.PS:90}). Let us consider the following model
 \begin{equation}
     \label{eq:Integ-Diff}
     \begin{cases}
     \e \partial_t n_\e(t,x)=\int_\R p(x+ h)n_\e(x+ h)G(h/\e)dh/\e+n_\e(b(x)-d(x)),\\
     n_\e(0,x)=\exp(\beta_0(x)/\e).
     \end{cases}
 \end{equation}
 Here $n_\e(t,x)$ stands for the phenotypic density of a population, with $t\in \R^+$ and $x\in \R$ corresponding respectively to time and to a phenotypic trait. Similarly to above, $b(x)$ and $p(x)$ stand for birth rates without and with mutations and $d(x)$ corresponds to a death rate. The mutations are distributed as $\f{1}{\e}G(\f{y}{\e})dy$. The  mutational  variance scales as $\e^2$, which is assumed to be a small parameter. A change of variable in time $t\to t/\e$ has then been taken into account to accelerate the slow dynamics   resulting from small effects of   mutations. This change of variable leads to the $\e$ coefficient in front of $\p_t n_\e$.

 Such an equation can be related to the stochastic process $Z_t^K$ introduced in Section \ref{sec:model} in two ways. Firstly, it can be obtained as a large population limit, that is $K\to +\infty$ of the stochastic process $Z_t^K/K$ (see \cite{fourniermeleard,champagnatferrieremeleard}), but taking the mutational variance constant of order $\e^2$ and independent of $K$. Secondly, the expectation of the stochastic process above satisfies \fer{eq:Integ-Diff} with $1/\log K=\e$.

The asymptotic behavior of $n_\e$ as $\e\to 0$ can be described via a Hopf-Cole transformation:
\[
U_\e(t,x)=\e \log(n_\e(t,x)).
\]
Notice the analogy of this transformation with the transformation $\f{1}{\log K}\log N_t^{K,A}$ used above.
It is proved in \cite{Barles2009} (in a slightly different setting, taking into account a competition term) that as $\e\to 0$, $U_\e$ converges to the unique viscosity solution of
\begin{equation}
\label{HJ:whole-domain}
    \begin{cases}
    \p_t U=p(x)H(\p_x U)+ R(x),\qquad x\in \R\\
    U(0,x)=\beta_0(x),
\end{cases}
\end{equation}
with $H$ and $R$ defined in \eqref{eq:def-H} and \fer{def:R}. Notice that this is the same equation as \fer{eq:HJ} but set in the whole domain $\R$. When considering the asymptotic behavior of the stochastic process instead of the deterministic integro-differential equation, possible extinction of small subpopulations are taken into account. This leads to a smaller limit $u_0\leq U$. The limit $u_0$ of the stochastic process can in particular take $-\infty$ as value. The variational formulation of the problem given in \eqref{def:u-a} provides  an intuitive explanation. The maximal trajectories in the variational problem correspond indeed to the typical trajectories of lineages as obtained in Theorems \ref{thm:bornesup} and \ref{thm:borneinf}. If the value function on such a trajectory takes a negative value $-c<0$, then the expected population size approaches the small size of order $K^{-c}$ as proved later in Theorem~\ref{thm27bis}, which results with high probability in extinction,  so the trajectory should be excluded. This is a significant difference with the deterministic derivation where all trajectories are allowed, leading to the Hamilton-Jacobi equation \eqref{HJ:whole-domain} in the whole domain. In the stochastic derivation the limit $u_0$ is positive in the set $\overline \Omega_0$ and equal to $-\infty$ outside this set. The function $u_0$ satisfies both a Dirichlet boundary condition and a state constraint condition. Several comments are in order.

(i) In deterministic works,  one usually considers a slightly more complex model taking into account a nonlocal mortality rate due to competition \cite{BP.GB:08,Barles2009,LMP}. In this case, the growth rate is given by $R(x,I_\e(t))$, with $I_\e(t)=\int_\R n_\e(t,x)dx$. Such a mortality rate leads often to a constraint of type
\[
\max_{x\in \R} U(t,x)=0.
\]
This constraint on the limit $U$ might seem confusing when it is combined with the threshold on the trajectories in the stochastic derivation leading to $u_0\geq 0$. Note however that in order to make a relevant comparison between these results we have to divide the population process $Z_t^K$ by $K$ (see \cite{champagnatferrieremeleard} where \eqref{eq:Integ-Diff} has been derived from a stochastic model). This means that, when put in a similar framework than usual deterministic works, one has to put the threshold of extinction equal to $-1$ instead of $0$. The expected equation, in presence of a competition term, would then be given by
\[ 
 \begin{cases}
 \p_t u=H(x,\p_x u)+R(x,I),& (t,x)\in \Omega^I\\
  \max_{x\in \R}u(t,x)=0,\\
 u(t,x)=-1,& (t,x)\in \p  \Omega^I , \, t>0,\\
 u(0,x)=\beta_0(x)-1, &\text{for all $x$ s.t $(0,x)\in \Omega^I$},
 \end{cases}
\]
with a set $\Omega^I$ which depends on the competition term $I(\cdot)$ and would be such that $u$ also satisfies a state constraint boundary condition in $\Omega^I$. Obtaining this equation rigorously is the aim of a future work.

(ii) Biological criticisms were made on the Hamilton-Jacobi method because of the so-called tail problem  \cite{perthame2010}. Artifacts may indeed arise due to an inadequate treatment of small subpopulations. Exponentially small subpopulations which actually may be extinct can have a strong influence on the future of the population. Artificial jumps of the dominant trait may occur. The branching patterns are also too fast. Modifications of the Hamilton-Jacobi equation were suggested in \cite{perthame2010,jabin2012,Mirrahimi2012} to solve this problem. These modifications were directly made to deterministic models. Here we use a stochastic individual-based approach providing a more biologically relevant justification of the outcome. Note however that we obtain a {closely related limit to} \cite{Mirrahimi2012}, even though this correction was described in a less direct and less precise way in \cite{Mirrahimi2012}. Moreover the threshold of extinction that was considered in \cite{Mirrahimi2012} was arbitrary. Here the threshold is obtained with a direct link to the population size.

(iii) As mentioned above, Hamilton-Jacobi equations are widely used to provide approximations of the phenotypic density of a population in a small mutational variance regime \cite{BP.GB:08,Barles2009,LMP}. Here, we go further than characterizing the phenotypic density. We also identify the typical lineages of the population thanks to Corollary \ref{cor:lineage}. A previous work \cite{foriengarnierpatout} had already made a link between the optimal trajectories of the Hamilton-Jacobi equation and the typical lineages of the population. These authors considered a deterministic model in a context of changing environment and they used the neutral fractions approach to study the inside dynamics of the population. Probabilist spinal techniques and historical birth and death processes that allow to link the typical lineages with stochastic individual-based models have been used in \cite{CHMT,HenryMeleardTran} to describe the phylogenies but only under a large population limit, whereas here, mutations and time are also rescaled. Some techniques that are used in the present work, such as using Feynman-Kac formulas for characterizing spines, are still taken from these papers (see also \cite{hardyharris}).

\section{Study of an auxiliary jump process}
\label{sec:LDP}

\subsection{A Feynman-Kac formula for $\E (N^{K,A}_t)$}

Given $t>0$ and $A\subset\mathbb{D}[0,t]$, we interpret $\E( N^{K,A}_t)$ as the expectation of a functional of an auxiliary process based on the mutations dynamics.

Based on \eqref{semimartingale}, let us introduce a random walk in continuous time $(X^K_t)_{t\in \R_+}$ with paths in $\mathbb{D}$ and infinitesimal generator 
\begin{equation}
    \label{eq:gene-X^K}
    \mathcal{L}^K\varphi(x)
   = p(x)\int_\R\left[\varphi\left(x+\frac{y}{\log K}\right)-\varphi(x)\right]G(y)dy,
\end{equation}
 defined for any measurable bounded function $\varphi$.

\medskip 
A pathwise representation similar to~\eqref{eq:eds_Poisson} of the process $(X^K_t)_{t\geq 0}$ can be obtained as follows: let us give ourselves (on some probability space) a Poisson point measure $Q(ds,d\theta,dy)$ on $\R_+\times \R_+\times \R$ with intensity $G(y)dy \otimes d\theta \otimes ds$ and an independent real random variable $X^K_0$. We can write
\begin{equation}
\label{marchealeatoire}
   X^K_t =  X^K_0 + \int_0^t \int_{\R_+}\int_{\R} \frac{y}{\log K}\ind_{\{\theta\le p(X^K_{s-}) \}}Q(ds,d\theta,dy). 
\end{equation}

Let us also define 
\begin{equation}
\label{eq:def-Y}
    Y^K_t = X^K_{t\log K},
\end{equation}
the process in the time scale $\log K$. 
For all $t>0$ and $x\in\R$, we denote by $\mu^K_{x,t}$ the law of the process $(Y^K_{s})_{s\in [0,t]}$ on $\mathbb{D}[0,t]$ conditionally on $Y^K_0=X^K_0=x$, and by $\E_{\mu^K_{x,t}}$ the corresponding expectation. 

\medskip 
We have the following classical Feynman-Kac representation of $N^{K,A}_t$ (also called many-to-one formula).

\begin{prop}\label{prop:FeynmanKac-bis}
Let $x\in\R$. We consider the birth-death-mutation process $(Z^K_t,t\ge 0)$ defined as before but started from a unique individual with trait $x$, i.e.\ $Z^K_0=\delta_{x}$, and we denote the corresponding expectation by $\mathbb{E}_{\delta_x}$.\\ 
(i) Let $\varphi:\R\to\R$ be bounded and measurable. Then, for any $t>0$, we have
		\begin{equation}
			\label{eq:MTO}
			\mathbb{E}_{\delta_{x}}\left[\langle {Z}^K_{t}, \varphi\rangle \right]=\mathbb{E}_{x}\left[\exp\left(\int_{0}^{t} R(X^K_s)\ ds \right)\varphi(X^K_{t}) \right],
		\end{equation}
		where $X^K$ is the  process defined in  \eqref{marchealeatoire}.\\
(ii) For $t>0$, $x\in \R$ and for a bounded measurable function $\Phi:\mathbb{D}[0,t]\to\mathbb{R}$,
		\begin{equation}
		 			\label{eq:MTO099}
			\mathbb{E}_{\delta_x}\left[\langle {\Theta}^K_t,\Phi\rangle\right]=\mathbb{E}_{\delta_{x}}\left[\sum_{u\in {V}^K_{t}}\Phi({X}^{K,u}_{s},\ s\leq t) \right]
            =\mathbb{E}_{x}\left[\exp\left(\int_{0}^{t} R(X^K_{s})\ ds \right)\Phi(X^K_{s},\ s\leq t) \right].   
		\end{equation}
(iii) For all $t>0$ and $A\subset\mathbb{D}[0,t]$,
\begin{align*}
        \E (N^{K,A}_{t})  & =\int_\R K^{\beta^K_0(x)}\E_{\mu^K_{x,T}}\Big[\exp\Big(\log K\int_0^{t} R(Y_s^K) ds \Big)\mathbbm{1}_{(Y^K_{s})_{s\in[0,t]}\in A}\Big]dx.
\end{align*}
\end{prop}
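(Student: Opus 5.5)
The plan is to prove (ii) first and to deduce (i) from it by taking $\Phi(f)=\varphi(f(t))$. Part (iii) then follows by conditioning on $Z^K_0$ and rescaling time.

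\textbf{Part (ii).} I will use the standard many-to-one argument through a Kolmogorov backward equation. Take $\Phi$ bounded and measurable on $\mathbb{D}[0,t]$. By a monotone class argument it suffices to treat cylinder functionals
\[
\Phi(f)=\prod_{i=1}^n\varphi_i(f(t_i)), \qquad 0\le t_1<\dots<t_n\le t,
\]
with $\varphi_i$ bounded and measurable. Both sides of \eqref{eq:MTO099} define finite measures on $\mathbb{D}[0,t]$: the left side is finite because $\E(\sup_{s\le t}\langle Z^K_s,1\rangle)<\infty$, and the right side because $R\le\bar R$. Cylinder functions determine such measures.

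For the single-time case $n=1$, let $m_s\varphi(x)=\E_{\delta_x}\langle Z^K_s,\varphi\rangle$. I will take expectations in \eqref{semimartingale} under $Z^K_0=\delta_x$ and use the Markov/branching property (first-jump decomposition). This shows that $m_s\varphi(x)$ solves
\[
\partial_s m_s\varphi = m_s\big((R+\mathcal L^K)\varphi\big), \qquad m_0\varphi=\varphi .
\]
This is a linear equation with bounded generator $R+\mathcal L^K$ acting on bounded measurable functions, since $p$ and $R$ are bounded. The Feynman--Kac semigroup $P^R_s\varphi(x)=\E_x[e^{\int_0^sR(X^K_r)dr}\varphi(X^K_s)]$ satisfies the same equation. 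By uniqueness (Gronwall in sup norm, with the bounded-operator exponential), $m_s=P^R_s$, which is (i).

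For general $n$, I will induct using the Markov property at the times $t_i$. Conditionally on $\mathcal F_{t_{n-1}}$, each individual alive at time $t_{n-1}$ starts an independent copy of the population, and its lineage coincides with its ancestor's lineage up to $t_{n-1}$. This gives
\[
\E_{\delta_x}\Big[\sum_{u\in V^K_{t_n}}\prod_i\varphi_i(X^{K,u}_{t_i})\Big]
=\E_{\delta_x}\Big[\sum_{u\in V^K_{t_{n-1}}}\prod_{i<n}\varphi_i(X^{K,u}_{t_i})\,\big(\varphi_{n-1}\cdot P^R_{t_n-t_{n-1}}\varphi_n\big)(X^{K,u}_{t_{n-1}})\Big],
\]
with the factor $\varphi_{n-1}$ absorbed into the last function. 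Iterating this, and then applying the Markov property of $X^K$ to the right-hand side, gives (ii).

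\textbf{Part (iii).} Conditionally on $Z^K_0=\sum_{j}\delta_{x_j}$, the subpopulations descended from the initial individuals are independent branching processes. So $\E(N^{K,A}_t\mid Z^K_0)=\sum_j g(x_j)$, where
\[
g(x)=\E_{\delta_x}\langle\Theta^K_{t\log K},\mathbbm 1_{\tilde A}\rangle ,
\]
and $\tilde A$ is the preimage of $A_t$ under the time change $f\mapsto f(\cdot\log K)$. Applying (ii) at time $t\log K$ (extended to nonnegative $\Phi$ by monotone convergence) and substituting $s\mapsto s\log K$ in the integral gives
\[
g(x)=\E_{\mu^K_{x,t}}\Big[\exp\Big(\log K\int_0^tR(Y^K_s)\,ds\Big)\mathbbm 1_{A}(Y^K)\Big].
\]
The Campbell formula for the Poisson measure \eqref{conditioninitiale} then yields $\E\sum_jg(x_j)=\int_\R K^{\beta_0^K(x)}g(x)\,dx$.

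The main obstacle is the rigorous justification of the backward equation and the Markov decomposition with the Ulam--Harris labels. I will handle it with a first-event decomposition: the total event rate $b+d+p$ is bounded, so the process has finitely many events on bounded intervals, and the moment bound justifies exchanging sums and expectations.
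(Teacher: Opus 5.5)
Your proposal is correct. For (i) and (iii) it follows the same idea as the paper: both sides of \eqref{eq:MTO} solve the same linear first-moment equation, and uniqueness identifies them. Then (iii) is the branching property at time $0$, Campbell's formula for the Poisson initial measure, and the time change $s\mapsto s\log K$.

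The differences lie in how the steps are carried out.
\begin{itemize}
\item \textbf{Part (i).} The paper (Appendix~\ref{app:manyto1}) writes the equation weakly for the intensity measure $\nu_t$, with test functions in $C^1_b$. It gets the same equation for the Feynman--Kac side by It\^o's formula and imports uniqueness from \cite{HenryMeleardTran}. Only afterwards does it pass to bounded measurable $\varphi$ by a bounded-pointwise density argument. You instead use that $R+\mathcal L^K$ is a bounded operator on bounded measurable functions, since $b,d,p$ are bounded. Uniqueness is then a Gronwall (or series) estimate in sup/total-variation norm, and the density step disappears. This is more self-contained: you only need Dynkin's formula for the pure-jump process $X^K$ with bounded measurable $\varphi$, and a local-in-time bound on $\E_{\delta_x}\langle Z^K_s,1\rangle$.
\item \textbf{Part (ii).} The paper gives no argument and refers to \cite{CHMT,marguet}. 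Your induction over cylinder functionals, using the Markov/branching property at the times $t_i$, is a valid explicit substitute. It is followed by a $\pi$-$\lambda$ argument, using that the Borel $\sigma$-field of $\mathbb D[0,t]$ is generated by the coordinate maps.
\end{itemize}

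Two small slips to fix.
\begin{itemize}
\item In your displayed recursion, the product should run over $i\le n-2$. As written, $\varphi_{n-1}$ appears twice, once in the product and once in $\varphi_{n-1}\cdot P^R_{t_n-t_{n-1}}\varphi_n$.
\item You should take $t_n=t$ (adding $\varphi_n\equiv 1$ if needed), so that the sum is over $V^K_t$.
\end{itemize}

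Finally, a point of terminology. The equation you write, $\partial_s m_s\varphi=m_s((R+\mathcal L^K)\varphi)$, is the forward equation obtained directly by taking expectations in \eqref{semimartingale}. It is not the backward equation coming from a first-jump decomposition. With a bounded generator either one suffices, so you need only one of them.
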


\begin{proof}The proof of point (i) for $\varphi\in C_b(\R)$ is given in Section \ref{app:manyto1} for the sake of completeness. Since the set of bounded continuous functions on $\R$ is dense for the bounded pointwise topology in the set of bounded measurable functions~\cite[Prop.\ 4.2, Chap.\ 3]{ethierkurtz}, Point~(i) extends easily to any $\varphi$ bounded measurable. Note that the Feynman-Kac formula \eqref{eq:MTO} concerns the law of first moments of $Z^K_{t}$ (with fixed $t$) issued from one individual with trait $x$ and can be extended to the whole trajectory using standard techniques (see~\cite{CHMT, marguet}), providing (ii). Point (iii) then follows from \eqref{conditioninitiale} changing time $t$ by $t\log K$ by change of variables. 
\end{proof}

In what follows, we will also need a many-to-one formula for the whole tree, as in \cite{bansayedelmasmarsalletran,marguet}. Recall that $V^K_{[0,t]}=\bigcup_{s\in[0,t]}V^K_s$ is the set of individuals born before time $t$ (including those still alive at time $t$). Recall that our continuous-time birth-death process is associated with a binary tree where each node corresponds to a birth or death event. If it is a death event, the node is a leaf. If it is a birth event, then the individual $v$ is replaced with $v0$ and $v1$, where $v0$ is the continuation of the mother $v$ (with the same trait) and $v1$ is the new offspring (with a possible mutated trait). Let us also denote by $S^0_v$ the birth time of $v$ and by $S_v$ the time at which $v$ disappears (either by death or reproduction). 

\begin{prop}\label{prop:forks}
   	We have that for all $t>0$, $\Phi :\mathbb{D}[0,t]\times[0,t]\to\mathbb{R}$ a bounded measurable function and $x\in \mathbb{R}$:
		\begin{multline}
			\label{eq:forks}
			\mathbb{E}_{\delta_{x}}\left[\sum_{v\in {V}^K_{[0,t]}} \Phi\big(({X}^{K,v}_{r \wedge S_v},\ r\leq t),S_v \wedge t\big) \right]\\
            =\int_0^t  \E_x\Bigg[\Phi\big((X_{r\wedge s}^{K}, r\leq t),s\big) (b+p+d)(X^K_s) \exp\Big(\int_0^s R(X^K_r)dr\Big)\Bigg]\ ds\\
        + \E_x\Bigg[\Phi\big((X_r^{K}, r\leq t),t\big)  \exp\Big(\int_0^t R(X^K_r)dr\Big)\Bigg].
		\end{multline} 
        Notice that if there exists a function $\Psi\ :\ \D[0,t]\times [0,t]\rightarrow \R$ such that for all $(f,s)\in \D[0,t]\times [0,t]$,
$\Phi(f, s)= \Psi\big(f,s)\ind_{s<t}$, 
then the second term in the right hand side of \eqref{eq:forks} vanishes. 
\end{prop}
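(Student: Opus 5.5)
Our plan is to follow the standard route for many-to-one formulas: first establish \eqref{eq:MTO099} in a version with an extra time argument, then sum over individuals according to the time at which they disappear. The left-hand side of \eqref{eq:forks} splits into two parts.

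The first part collects the individuals $v$ with $S_v\le t$. Each such individual disappears at an event of the Poisson measures $N$ or $Q$ of \eqref{eq:eds_Poisson}: a clonal birth, a death, or a mutant birth. The total disappearance rate is $b+d+p$. The second part collects the individuals with $S_v>t$, i.e.\ $v\in V^K_t$. For these the summand is $\Phi((X^{K,v}_r)_{r\le t},t)$, and Proposition~\ref{prop:FeynmanKac-bis}(ii), applied with the functional $f\mapsto\Phi(f,t)$, gives exactly the second term on the right-hand side. The remark about $\Phi=\Psi\ind_{s<t}$ is then immediate, since that term equals $\Phi(\cdot,t)=0$.

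For the first part, we write the sum as a stochastic integral against the Poisson point measures:
\begin{equation*}
\sum_{v:\,S_v\le t}\Phi\big((X^{K,v}_{r\wedge S_v})_{r\le t},S_v\big)=\int_0^t\int_{\mathcal U}\int_{\R_+}\ind_{v\in V^K_{s-}}\ind_{\theta\le (b+d)(X^{K,v}_{s-})}\Phi\big((X^{K,v}_{r\wedge s})_{r\le t},s\big)N(ds,dv,d\theta)+(\text{analogous }Q\text{ term}),
\end{equation*}
where the $Q$ term carries the indicator $\theta\le p$. Here the lineage $X^{K,v}$ of $v$ is stopped at $s=S_v$, so it is predictable in $s$ (one uses the left limit, which coincides with the value at $s$ since the trait of $v$ does not jump at its own disappearance). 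Taking expectations and using the compensators $ds\,n(dv)\,d\theta$ and $ds\,n(dv)\,G(y)dy\,d\theta$, we get
\begin{equation*}
\int_0^t\E_{\delta_x}\Big[\sum_{v\in V^K_s}(b+d+p)(X^{K,v}_s)\,\Phi\big((X^{K,v}_{r\wedge s})_{r\le t},s\big)\Big]ds .
\end{equation*}
For fixed $s$, we then apply Proposition~\ref{prop:FeynmanKac-bis}(ii) at time $s$ to the bounded functional $g\in\D[0,s]\mapsto(b+d+p)(g(s))\,\Phi((g_{r\wedge s})_{r\le t},s)$. This yields $\E_x\big[(b+p+d)(X^K_s)\,\Phi((X^K_{r\wedge s})_{r\le t},s)\,e^{\int_0^sR(X^K_r)dr}\big]$, which is the first term of \eqref{eq:forks}.

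The main obstacle is justifying this compensation step rigorously. Three points need care:
\begin{itemize}
\item integrability, which follows from $\Phi$ bounded, $b,d,p$ bounded, and $\E(\sup_{s\le t}\langle Z^K_s,1\rangle)<\infty$;
\item predictability of the integrand in $(s,v)$, including the measurability of $s\mapsto\Phi(\cdot,s)$, which we handle by first taking $\Phi$ continuous in $s$ (or a product form) and then extending by a monotone class argument;
\item the correct identification of which labels disappear at each atom of the Poisson measures.
\end{itemize}
Once these are settled, Fubini's theorem gives the formula.
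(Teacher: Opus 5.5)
Your proof is correct, but it is organized differently from the paper's.

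\textbf{Your route.} You split the sum at the outset into two groups.
\begin{itemize}
\item Individuals alive at time $t$ (i.e.\ $S_v>t$): you treat them directly by Proposition~\ref{prop:FeynmanKac-bis}(ii), applied to $f\mapsto\Phi(f,t)$.
\item Individuals with $S_v\le t$: you read their disappearances as the atoms of $N$ with $\theta\le b+d$ and of $Q$ with $\theta\le p$ in \eqref{eq:eds_Poisson}. Replacing the Poisson measures by their compensators then produces the weight $b+p+d$ at once, and Proposition~\ref{prop:FeynmanKac-bis}(ii) at time $s$ finishes.
\end{itemize}

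\textbf{The paper's route.} The paper works label by label.
\begin{itemize}
\item It uses that, given the past at the birth time $S^0_v$, the lifetime of $v$ is exponential with parameter $\Lambda=b+p+d$ evaluated at the (constant) trait of $v$.
\item A Fubini computation then turns $\Phi(\cdot,S_v\wedge t)$ into $\int_{S^0_v}^{S_v}\Lambda(X^{K,v}_s)\,\Phi(\cdot,s\wedge t)\,ds$, and this integral is cut at $t$.
\item On $\{v\in V^K_t\}$, the part after $t$ equals $\Phi(\cdot,t)\,\Lambda(X^{K,v}_t)\,(S_v-t)$. The memoryless property $\E[S_v-t\mid\mathcal{F}_t]=1/\Lambda(X^{K,v}_t)$ brings it back to $\Phi(\cdot,t)$ summed over $V^K_t$.
\item Only then is Proposition~\ref{prop:FeynmanKac-bis}(ii) applied to both pieces.
\end{itemize}

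\textbf{Comparison.} Your route is shorter. It avoids the detour of first converting the alive-at-$t$ term into a lifetime integral and then undoing this with the memoryless property. The price is the stochastic-calculus bookkeeping, which you correctly identify: predictability of the lineage stopped at $s$ (using the left limit), the compensator identity, and integrability via boundedness of $b,d,p,\Phi$ and of $\E\sup_{s\le t}\langle Z^K_s,1\rangle$. The paper's argument needs only exponential clocks and Fubini. However, the conditional exponential law of lifetimes it relies on is itself a consequence of the Poisson construction you exploit directly. A minor point: the monotone class step you mention is not really needed, since $\Phi$ is assumed jointly measurable on $\mathbb{D}[0,t]\times[0,t]$.
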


The proof of Proposition \ref{prop:forks} is deferred to Appendix \ref{app:prop-forks}.

Using Proposition \ref{prop:forks}, we can establish an identity for forks, i.e. sums over pairs of individuals living at time $t$ (also called many-to-two formula). We do not state this corollary in full generality as this would require extra notations, but will prove a version tailored for our needs in Section \ref{sec:estimateNfeps} (see Lemma~\ref{lem-appli-fourches}).

\subsection{Large deviation principle for $(\mu^K_{x,t})_{K\geq 1}$}

For all $t>0$ and $x\in\R$, we define the function $I_t$ restricted to functions starting from $x$ as follows: for all $f\in\mathbb{D}[0,t]$,
\[
I_{t,x}(f)=\begin{cases}
    I_t(f) & \text{if }f(0)=x, \\ +\infty & \text{otherwise.}
\end{cases}
\]
Note that $I_t(f)=I_{t,f(0)}(f)$.\\
The following large deviations principle is a direct application of~\cite[Theorem 10.2.6]{dupuisellis}. Indeed, the conditions 10.2.2 and 10.2.4 in this theorem are obviously satisfied thanks to the assumptions on the measure $p(x)G(y)dy$.

\begin{theorem}
    \label{thm:main-PGD-bis}
    For all $t>0$ and $x\in \R$, the family of laws $(\mu^K_{x,t})_{K\geq 1}$ satisfies a large deviation principle on $\mathbb{D}[0,t]$ with rate $1/\log K$ and good rate function $I_{t,x}$: for any subset $A\subset \mathbb{D}[0,t]$,
\begin{multline}
    -\inf_{f\in \mathrm{int}(A)} I_{t,x}(f)\leq \liminf_{K\rightarrow +\infty} \frac{1}{\log K} \mathbb{P}_x\big((X^K_{s\log K})_{s\leq t} \in A\big)\\
    \leq \limsup_{K\rightarrow +\infty} \frac{1}{\log K} \mathbb{P}_x\big((X^K_{s\log K})_{s\leq t} \in A\big) \leq -\inf_{f\in \mathrm{cl}(A)} I_{t,x}(f),
\end{multline}where $\mathrm{int}(A)$ and $\mathrm{cl}(A)$ are the interior and closure of $A$ for the Skorohod topology. \\
    In addition, the family of measures $(\mu^K_{x,t})_{K\geq 1}$ satisfies the Laplace principle with rate $1/\log K$ and rate function $I_{t,x}$ uniformly on compact sets with respect to $x\in\R$, as recalled below. 
\end{theorem}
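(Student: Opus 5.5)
The plan is to obtain Theorem~\ref{thm:main-PGD-bis} as an instance of the general large deviation theory for small-noise jump Markov processes in \cite[Chapter 10]{dupuisellis}, and not to prove anything from scratch. The first step is to write $Y^K_t=X^K_{t\log K}$ in that framework. With $\varepsilon=1/\log K$, the process $Y^K$ jumps at rate $\varepsilon^{-1}p(Y^K_{t-})$, and its jumps are $\varepsilon Y$ with $Y\sim G(y)dy$. Equivalently, its generator is
\[
\varepsilon^{-1}p(x)\int_\R\big[\varphi(x+\varepsilon y)-\varphi(x)\big]G(y)dy .
\]
This is exactly the scaling of \cite[Section 10.2]{dupuisellis}, with jump measure $\nu_x(dy)=p(x)G(y)dy$. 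The associated log-moment generating function is
\[
\int_\R (e^{\alpha y}-1)\,\nu_x(dy)=p(x)H(\alpha),
\]
and its Legendre transform is $L(x,v)$ as defined in \eqref{eq:def-L}. The rate function given by the theorem is therefore $\int_0^t L(f_s,\dot f_s)\,ds$ on absolutely continuous paths with $f(0)=x$ and $+\infty$ elsewhere, which is $I_{t,x}$.

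The second step is to check the hypotheses of \cite[Theorem 10.2.6]{dupuisellis}, namely Conditions 10.2.2 and 10.2.4.

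\emph{Exponential moments.} We need $\sup_x\int e^{\alpha|y|}\nu_x(dy)<\infty$ for all $\alpha$. This holds because $p\le\bar p$ and $G$ has all exponential moments finite.

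\emph{Continuity of $x\mapsto\nu_x$.} We need continuity in the topology required there, typically weak convergence together with convergence of exponential moments. This follows from the continuity of $p$ and the fact that $G$ does not depend on $x$.

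\emph{Non-degeneracy.} The convex hull of the support of $\nu_x$ must contain a neighbourhood of $0$, uniformly in $x$. This follows from $p\ge\underline p>0$ together with $G>0$ on $\R$. Since $G$ is even, the mean drift $p(x)\int yG(y)dy$ vanishes, which is consistent with $L(x,0)=0$.

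These hypotheses also give the bounds \eqref{condL}. Together with the local Lipschitz continuity of $p$, which guarantees well-posedness and the Lipschitz-type regularity of $x\mapsto L(x,v)$ used in the proof of the upper bound, they deliver the LDP on $\mathbb{D}[0,t]$ with the Skorohod topology, with goodness of $I_{t,x}$.

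The third step concerns the uniform Laplace principle. Theorem 10.2.6 in \cite{dupuisellis} is itself stated as a Laplace principle, proved via the weak convergence/relative entropy representation, and it is uniform for initial conditions in compact sets. I would simply invoke that statement. If only the version with a fixed initial point were available, I would deduce uniformity as follows. Take $x_K\to x$ in a compact set. Couple the processes started at $x_K$ and at $x$ through the same Poisson measure $Q$ in \eqref{marchealeatoire}. Use the Lipschitz continuity of $p$ on compacts and a Gronwall argument to show that the two paths are superexponentially close. Finally, use the lower semicontinuity of $(x,f)\mapsto I_{t,x}(f)$ and compactness of its level sets uniformly in $x$ on compacts. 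Together these give uniform convergence of $\frac{1}{\log K}\log\E_{\mu^K_{x,t}}[e^{-\log K\,h}]$ for bounded continuous $h$.

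The main obstacle I expect is matching the exact form of the hypotheses in \cite{dupuisellis}. That reference requires continuity of $x\mapsto\nu_x$ and a uniform bound, whereas $p$ is only locally Lipschitz. Since $p$ is bounded and uniformly positive, these conditions hold globally, and the remaining work is bookkeeping.
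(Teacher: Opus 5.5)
Your proposal is correct and is essentially the paper's argument: the paper also obtains the result as a direct application of \cite[Theorem 10.2.6]{dupuisellis}, observing that Conditions 10.2.2 and 10.2.4 hold for the jump kernel $p(x)G(y)\,dy$, and that theorem already gives the Laplace principle uniformly for initial conditions in compact sets. Your fallback is therefore unnecessary, and its key claim is false: under the common Poisson measure, the rescaled discrepancy $(Y^{K,x_K}-Y^{K,x})\log K$ jumps at rate $\log K\,|p(Y^{K,x_K})-p(Y^{K,x})|$ (proportional to its own size when $p'\neq 0$) with symmetric jumps of law $G$, so it behaves like a critical branching process and reaches $\delta\log K$ with probability only polynomially small in $K$ (of order $K^{-c\delta}$), which means the coupled paths are not superexponentially close in general.
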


We recall that the rate function $I_{t,x}$ is good if it is lower semi-continuous and if $\{I_{t,x}\leq M\}$ is compact for all $M<\infty$. We also recall that $\mu^K_{x,t}$ satisfies the Laplace principle with rate $1/\log K$ and rate function $I_{t,x}$ uniformly on compact sets with respect to $x\in\R$ if, for all compact subset $A$ of $\R$ and all constant $M<\infty$,
\begin{equation}
\label{eq:bft-unif-compact}
   \bigcup_{x\in A}\{f\in\mathbb{D}[0,t],\ I_{t,x}(f)\leq M\} 
\end{equation}
is compact and for all bounded continuous function $\phi:\mathbb{D}[0,t]\to \R$
\begin{equation}
    \label{eq:bft-unif-compact-2}
    \lim_{K\to\infty}\sup_{x\in A}\left|\frac{1}{\log K}\log \E_{\mu^K_{t,x}}\left(K^{-\phi(Y^K_s,s\leq t)}\right)+\inf_{f\in\mathbb{D}[0,t]}(\phi(f)+I_{t,x}(f))\right|=0.
\end{equation}See \cite[Def.\,1.2.6]{dupuisellis}.

\medskip

Let us prove a uniform in $x$ version of the exponential tightness of the measures $\mu^K_{x,t}$.

\begin{lemma}
\label{lem:expo-tight}
    For all $t>0$, the family of measures $(\mu^K_{x,t})_{K,x}$ is exponentially tight, uniformly on compact sets. This means that, for all $M<\infty$ and all compact subset $B$ of $\R$, there exists a compact subset $A$ of $\mathbb{D}[0,t]$ such that
    \[
    \limsup_{K\to\infty}\sup_{x\in B}\frac{1}{\log K}\log\mu^K_{x,t}(A^c)\leq -M.
    \]
\end{lemma}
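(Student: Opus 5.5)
Since Theorem~\ref{thm:main-PGD-bis} already gives the Laplace principle uniformly on compact sets, one route is abstract: \cite[Prop.\,1.2.7 / Thm.\,1.2.3]{dupuisellis}-type arguments give exponential tightness from a Laplace principle with good rate function. However, the uniformity in $x$ is the delicate point, so I would construct the compact set directly. The process $Y^K$ started at $x$ equals $x+W^K$, where $W^K_s$ is built from the same Poisson measure in \eqref{marchealeatoire}. The jump rate is bounded by $\bar p\log K$, and the jumps are $Y/\log K$ with $Y\sim G$. I would use the Arzel\`a--Ascoli-type characterisation of compacts in $\mathbb{D}[0,t]$: a set is relatively compact if it is uniformly bounded and its modulus $w'(f,\eta)\to 0$ uniformly as $\eta\to0$. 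I would therefore take
\[
A=\Big\{f:\ \sup_{s\le t}|f(s)|\le C,\ w'(f,\eta_j)\le 1/j\ \forall j\ge1\Big\}
\]
for suitable $C$ and a sequence $\eta_j\downarrow0$. This $A$ is closed and, by the characterisation, compact.

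It then suffices to prove two estimates, each uniform over $x\in B$, bounding $\frac1{\log K}\log\P_x(\cdot)$ by $-M-\log 2$. The first is for $\sup_s|Y^K_s-x|>C-\sup_B|x|$. The second, for each $j$, is for $\sup_{|u-v|\le\eta_j}|Y^K_u-Y^K_v|>1/j$, which dominates $w'$. I would then sum over $j$; the sum is controlled if the $j$-th bound is of the form $K^{-(M+j)}$.

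For both estimates I would use exponential martingales. For $\alpha\in\R$, the process
\[
\exp\Big(\alpha\log K\,(Y^K_s-Y^K_0)-\log K\int_0^s p(Y^K_r)H(\alpha)dr\Big)
\]
is a martingale by \eqref{eq:gene-X^K} in time scale $\log K$. Since $p\le\bar p$ and $H\ge0$ (Jensen, as $G$ is even), we get $\E_x[e^{\alpha\log K(Y^K_s-x)}]\le K^{\bar pH(\alpha)s}$. Doob's maximal inequality then gives
\[
\P_x\big(\sup_{s\le t}\pm(Y^K_s-x)\ge c\big)\le K^{-\alpha c+\bar pH(\alpha)t}.
\]
None of this depends on $x$. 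Choosing $\alpha$ large, with $c$ fixed, handles the first estimate. For the second, I would cover $[0,t]$ by $\lceil t/\eta_j\rceil$ intervals of length $2\eta_j$ and apply the same bound on each using the Markov property. The bound $K^{-\alpha/(3j)+\bar pH(\alpha)2\eta_j}$ holds uniformly in the starting point. I would pick $\alpha=\alpha_j=3j(M+j+1)$ and then $\eta_j$ small enough that $\bar pH(\alpha_j)2\eta_j\le 1$; this is possible because $H$ is finite everywhere. The polynomial factor $t/\eta_j$ does not depend on $K$, and the resulting terms are summable in $j$ for large $K$.

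The main obstacle I expect is making the sum over $j$ uniform in $K$, that is, getting the factors $\lceil t/\eta_j\rceil$ and the $j$-dependence to combine into $K^{-M}$ for all large $K$ at once. I would handle this by only requiring the modulus conditions for $j\ge j_0(K)$ trivially, or more simply by bounding $\sum_j (t/\eta_j+1)K^{-(M+j)}\le K^{-M}\sum_j(t/\eta_j+1)2^{-j}$ for $K\ge2$. The latter works provided $\eta_j$ is chosen decaying no faster than needed; if necessary I would strengthen $\alpha_j$ to absorb $\log(1/\eta_j)$.
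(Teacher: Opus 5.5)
Your exponential-martingale bounds are correct and uniform in $x$. The gap is in the final summation over $j$, and it is structural rather than bookkeeping. You control $w'(Y^K,\cdot)$ at \emph{every} scale through the uniform modulus $w(f,\eta)=\sup_{|u-v|\le\eta}|f(u)-f(v)|$.

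For fixed $K$, the path $Y^K$ is piecewise constant with jumps of size $|y|/\log K>0$. If it jumps at all before $t$, then $w(Y^K,\eta)$ is at least that jump size for every $\eta>0$. Hence $\bigcup_j\{w(Y^K,\eta_j)>1/j\}$ contains the event that $Y^K$ jumps on $[0,t]$, whose $\mathbb{P}_x$-probability is $1-K^{-p(x)t}\ge 1-K^{-\underline{p}t}$. So no choice of $(\alpha_j,\eta_j)$ can make your union bound smaller than $K^{-M}$.

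Concretely, the constraint $2\bar pH(\alpha_j)\eta_j\le 1$ forces $1/\eta_j\ge 2\bar pH(\alpha_j)$. Since $H(\alpha)\ge (e^{\alpha}-1)\int_1^\infty G(y)\,dy-1$ and $\alpha_j\ge 3j^2$, the terms $(t/\eta_j)K^{-(M+j)}$ tend to $+\infty$ in $j$ for every fixed $K$. Your comparison with $K^{-M}\sum_j(t/\eta_j+1)2^{-j}$ is therefore a comparison with $+\infty$, and enlarging $\alpha_j$ only shrinks $\eta_j$ further, so that fix is circular. The other suggestion, that conditions for $j\ge j_0(K)$ hold ``trivially'', also fails: $A$ cannot depend on $K$, and at small scales $w'(Y^K,\eta_j)\le 1/j$ is a genuine constraint on the time separation of jumps.

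What is missing is the idea of Step~2 in the paper's proof. At each level $j$, the asymptotic bound is only effective for $K\ge K_0(j)$. In your setting, $K_0(j):=2(\lceil t/\eta_j\rceil+1)$ gives $\sup_x\mathbb{P}_x(w(Y^K,\eta_j)>1/j)\le K^{-(M+j-1)}$. The finitely many $K<K_0(j)$ must then be handled by ordinary tightness of the fixed-$K$ laws, uniformly in the starting point, using $w'$ itself.

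This is easy here. The event $w'(Y^K,\eta)>0$ requires two jumps within time $\eta$, or a jump within $\eta$ of $0$ or $t$. Since the jump rate is at most $\bar p\log K$, this gives $\sup_x\mathbb{P}_x(w'(Y^K,\eta)>0)\le (2\bar p\log K+\bar p^2t(\log K)^2)\eta$. Now replace the requirement $w'(f,\eta_j)\le 1/j$ in $A$ by $w'(f,\eta_j')\le 1/j$, with $\eta_j'\le\eta_j/2$ small enough that this bound is $\le K^{-(M+j-1)}$ for $2\le K<K_0(j)$. This does not spoil the large-$K$ bound, because $w'(f,\eta_j')\le w'(f,\eta_j/2)\le w(f,\eta_j)$. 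The level-$j$ bound then holds for every $K\ge 2$, and $\sum_j K^{-(M+j-1)}\le 2K^{-M}$.

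Two smaller points:
\begin{itemize}
\item In your first estimate the order of choices is reversed. With $c$ fixed, $-\alpha c+\bar pH(\alpha)t\to+\infty$ as $\alpha\to\infty$, because $H$ grows exponentially. Instead fix $\alpha$ (e.g.\ $\alpha=1$) and take $C$ large; this also makes that bound valid for all $K$.
\item The set $\{w'(\cdot,\eta)\le\varepsilon\}$ need not be closed, so take the closure of $A$, which is harmless.
\end{itemize}

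With these repairs your route is a valid and more elementary alternative to the paper's, replacing the uniform Laplace principle of Step~1 by explicit exponential martingales.
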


\begin{proof}
The proof of Lemma~\ref{lem:expo-tight} is inspired form~\cite[Exercice 4.1.10]{dembozeitouni}. 
\medskip

\noindent\textit{Step 1: asymptotic bound.} Our first goal is to prove that, for all $M_0<\infty$, all $\eta>0$ and all compact subset $B$ of $\R$, there exist an integer $m\geq 1$, functions $f_1,
\ldots, f_m\in\mathbb{D}[0,t]$ and an integer $K_0$ such that, for all $K\geq K_0$,
\begin{equation}
\label{eq:step1-lem-expo-tight}
    \sup_{x\in B}\mu^K_{x,t}\left[\left(\bigcup_{i=1}^m B_\eta(f_i)\right)^c\right]\leq K^{-M_0},
\end{equation}
where $B_{\eta}(f)$ is the open ball of radius $\eta$ centered at $f$ for Skorohod's distance $d_\text{Sko}$.

    Fix $\eta>0$, $M_0<\infty$ and $B\subset\R$ compact. The set $\mathbb{D}[0,t]$ equipped with the Skorohod topology is a Polish space, so in particular there exists a dense sequence $(f_n)_{n\geq 1}$. Let $m\in\mathbb{N}$. For all $k\geq 1$, we define for $f\in \mathbb{D}[0,t]$
    \begin{equation}
    \label{eq:def-phi_k}
    \phi_k(f)=k\,d_\text{Sko}\left(f,(\cup_{i=1}^m B_{\eta}(f_i))^c\right).        
    \end{equation}
    Since the function $\phi_k$ is bounded and continuous, it follows from the Laplace principle uniformly on compact sets~\eqref{eq:bft-unif-compact-2} that
    \begin{align}
    \limsup_{K\to\infty}\sup_{x\in B}\frac{1}{\log K}\log \mu^K_{x,t}\left[\left(\bigcup_{i=1}^m B_{\eta}(f_i)\right)^c\right] & \leq 
            \limsup_{K\to\infty}\sup_{x\in B}\frac{1}{\log K}\log \E_{\mu^K_{x,t}}(K^{-\phi_k}) \\ 
            & \leq -\inf_{x\in B}\inf_{f\in\mathbb{D}[0,t]}(\phi_k(f)+I_{t,x}(f)) .
            \label{eq:LDP-unif}
    \end{align}
    If $f\in \cup_{i=1}^m B_{\eta/2}(f_i)$, $\phi_k(f)\geq k\eta/2$, hence
    \[
    \limsup_{K\to\infty}\sup_{x\in B}\frac{1}{\log K}\log \mu^K_{x,t}\left[\left(\bigcup_{i=1}^m B_{\eta}(f_i)\right)^c\right]\leq -\left(\frac{k\eta}{2}\wedge \inf_{x\in B}\ \inf_{f\not\in\cup_{i=1}^m B_{\eta/2}(f_i)}I_{t,x}(f)\right).
    \]
    Now,~\eqref{eq:bft-unif-compact} implies that
    \[
    \bigcup_{x\in B}\left\{f\in\mathbb{D}[0,t],\, I_{t,x}(f)\leq M_0+1\right\}
    \]
    is compact, hence there exists $m\geq 1$ such that this set is included in $\cup_{i=1}^m B_{\eta/2}(f_i)$. Therefore, for such a value of $m$,
    \[
    \limsup_{K\to\infty}\sup_{x\in B}\frac{1}{\log K}\log \mu^K_{x,t}\left[\left(\bigcup_{i=1}^m B_{\eta}(f_i)\right)^c\right]\leq-\frac{k\eta}{2}\wedge (M_0+1).
    \]
    Choosing $k> 2M_0/\eta$ ends the proof of~\eqref{eq:step1-lem-expo-tight}.
    \medskip

    \noindent\textit{Step 2: uniform bound.} We now prove the  following stronger version of~\eqref{eq:step1-lem-expo-tight}: for all $M_0<\infty$, $\eta>0$ and compact subset $B$ of $\R$, there exists an integer $m\geq 1$ and functions $f_1,
\ldots, f_m\in\mathbb{D}[0,t]$ such that, for all $K\geq 1$,
\begin{equation}
\label{eq:step1-lem-expo-tight-2}
    \sup_{x\in B}\mu^K_{x,t}\left[\left(\bigcup_{i=1}^m B_\eta(f_i)\right)^c\right]\leq K^{-M_0}.
\end{equation}

In view of Step 1, to prove this, increasing $m$ if necessary, it is sufficient to prove that for any $K\leq K_0-1$, for all $m$ large enough,
    \[
    \sup_{x\in B}\mu^K_{x,t}\left[\left(\bigcup_{i=1}^m B_{\eta}(f_i)\right)^c\right]\leq K^{-M_0}.
    \]
    So we fix $K\leq K_0-1$ and $M_0<\infty$ until the end of Step 2.
    The last claim follows from a continuity property of the process $Y^K$ constructed from~\eqref{marchealeatoire} and~\eqref{eq:def-Y} with respect to its initial value: given $x\in\R$ and a Poisson point measure $Q^K(du,d\theta,dy)$ on $\R_+\times \R_+\times \R$ with intensity $(\log K)dud\theta G(y)dy$, we define $Y^{K,x}$ as the solution to
    \[
       Y^K_t =  x + \int_0^t \int_{\R_+}\int_{\R} \frac{y}{\log K}\ind_{\{\theta\le p(Y^K_{u-}) \}}Q^K(du,d\theta,dy)
    \]
    and we define $\widetilde{Y}^{K,x}$ as the solution to
    \[
       \widetilde{Y}^K_t =  x + \int_0^t \int_{\R_+}\int_{\R} \frac{y}{\log K}\ind_{\{\theta< p(\widetilde{Y}^K_{u-}) \}}Q^K(du,d\theta,dy). 
    \]
    Notice that the inequality in the definition of $\widetilde{Y}^K_t$ is strict, contrary to the one in the definition of $Y^K_t$.
    By standard properties of Poisson point measures, $Y^{K,x}=\widetilde{Y}^{K,x}$ almost surely. In addition, by continuity of the function $p$, for any $\omega$ in the event $\{Y^{K,x}=\widetilde{Y}^{K,x}\}$, the map
    \[
    y\mapsto (Y^{K,y}_s(\omega)-y)_{s\leq t}
    \]
    is constant for $y$ in a neighborhood of $x$. Therefore, it follows from Lebesgue's theorem that, for any measurable set $G\subset\mathbb{D}[0,t]$, defining for all $y\in\R$
    \begin{equation}\label{notation:shift}
    G_y:=\left\{f+y,\, f\in G\right\},
    \end{equation}
    the map
    \[
    y\mapsto \mu^K_{y,t}(G_y)
    \]
    is continuous at $x$. Since $x$ was arbitrary, we deduce that this map is continuous. More precisely, for all $x\in\R$ and $G\subset\mathbb{D}[0,t]$,
    \[
    \left|\mu^K_{y,t}(G_y)-\mu^K_{x,t}(G_x)\right|\leq \P\left(Y^{K,y}_s-y\neq Y^{K,x}_s-x,\text{ for some } s\in[0,t]\right)\xrightarrow[y\to x]{}0.
    \]
    In particular, there exists $\delta_x>0$ such that for all $y\in[x-\delta_x,x+\delta_x]$,
    \begin{equation}
        \label{eq:blabla}
    \P\left(Y^{K,y}_s-y\neq Y^{K,x}_s-x,\text{ for some } s\in[0,t]\right)\leq\frac{K^{-M_0}}{2}.
     \end{equation}
    We can assume without loss of generality that $\delta_x\leq\eta/2$ for all $x\in \R$. Since the compact set $B$ is included in the union of the intervals $(x-\delta_x,x+\delta_x)$ there exist $N<\infty$ and $x_1,\ldots,x_N\in\R$ such that
    \[
    B\subset\bigcup_{j=1}^N (x_j-\delta_{x_j},x_j+\delta_{x_j}).
    \]
    For any $j\in\{1,\ldots,N\}$, since $(f_i)_{i\geq 1}$ is dense in $\mathbb{D}[0,t]$, there exists $m_j$ large enough such that
    \[
    \mu^K_{x_j,t}\left[\left(\bigcup_{i=1}^{m_j}B_{\eta/2}(f_i)\right)^c\right]\leq\frac{K^{-M_0}}{2}.
    \]
    Our goal is   now to extend this estimate to any $x\in B$. Let $x\in B$, there exists $j\in\{1,\ldots,N\}$ such that $|x-x_j|\leq\delta_{x_j}$. Recalling that $\delta_{x_j}<\eta/2$,  for all $m\geq \sup_{1\leq j\leq N} m_j$, we have the inclusion,
    \[
    \left(\bigcup_{i=1}^{m}B_{\eta/2}(f_i)\right)_{x-x_j}\subset \left(\bigcup_{i=1}^{m}B_\eta(f_i)\right).
    \]where the notation $()_{x-x_j}$ has been defined in \eqref{notation:shift}.
    Therefore,
    \begin{align*}
            \mu^K_{x,t}\left[\left(\bigcup_{i=1}^{m}B_\eta(f_i)\right)^c\right] & \leq \mu^K_{x,t}\left[\left(\bigcup_{i=1}^{m}B_{\eta/2}(f_i)\right)_{x-x_j}^c\right] \\ & \leq \left|\mu^K_{x,t}\left[\left(\bigcup_{i=1}^{m}B_{\eta/2}(f_i)\right)_{x-x_j}^c\right]-\mu^K_{x_j,t}\left[\left(\bigcup_{i=1}^{m}B_{\eta/2}(f_i)\right)^c\right]
            \right| \\ & \quad+\mu^K_{x_j,t}\left[\left(\bigcup_{i=1}^{m_j}B_{\eta/2}(f_i)\right)^c\right] \\ & \leq \P\left(Y^{K,y}_s-y\neq Y^{K,x}_s-x,\text{ for some } s\in[0,t]\right)+\frac{K^{-M_0}}{2}. 
    \end{align*}
    Hence, by~\eqref{eq:blabla}, we have proved~\eqref{eq:step1-lem-expo-tight-2} for $m\geq \sup_{1\leq j\leq N} m_j$. 
    \medskip

    \noindent\textit{Step 3: Conclusion of the proof.} We fix $M<\infty$ and a compact subset $B$ of $\R$. For all integer $k\geq 1$, we apply~\eqref{eq:step1-lem-expo-tight-2} with $\eta= 1/k$ and $M_0=k M$: there exists $m_k<\infty$ such that for all $K\geq 1$
    \[
        \sup_{x\in B}\mu^K_{x,t}\left[\left(\bigcup_{i=1}^{m_k} B_{1/k}(f_i)\right)^c\right]\leq K^{-kM}.
    \]
    This implies that
    \[
    \sup_{x\in B}\mu^K_{x,t}\left[\left(\bigcap_{k=1}^\infty\ \bigcup_{i=1}^{m_k} B_{1/k}(f_i)\right)^c\right]\leq \frac{K^{-M}}{1-K^{-M}}.
    \]
    Now, the set 
    \[
    \bigcap_{k=1}^\infty\ \bigcup_{i=1}^{m_k} B_{1/k}(f_i)
    \]
    is precompact. Indeed, considering any sequence $(\phi_j)_{j\geq 1}$ in this set, since it belongs to $\cup_{i=1}^{m_1} B_{1}(f_i)$, there exists $i_1\in\{1,\ldots,m_1\}$ such that $\phi_j\in B_{1}(f_{i_1})$ infinitely often. Using the diagonal extraction procedure, we deduce that there exists a subsequence, still denoted $(\phi_j)_{j\geq 1}$ for convenience, and integers $i_k\in\{1,\ldots,m_k\}$ such that $\phi_j\in B_{1/k}(f_{i_k})$ for all $j$ large enough. This implies that the sequence $(\phi_j)_{j\geq 1}$ is Cauchy, hence the conclusion.

    Therefore, the compact set
    \[
    \overline{\bigcap_{k=1}^\infty\ \bigcup_{i=1}^{m_k} B_{1/k}(f_i)}
    \]
    satisfies the claim of Lemma~\ref{lem:expo-tight}.
\end{proof}

\subsection{Non-variational form and  domain of the rate function $I_t$ }\label{sec:I-nonvariational}

Note that the function $H$ is a convex function and $H'$ is a $C^1$-diffeomorphism from $\mathbb{R}$ to itself. We have the following result, providing an alternative, non-variational expression for the rate function $I_t$ and characterizing the set of functions $f$ such that $I_t(f)<+\infty$.

\begin{lemma}
\label{lem:fct-tx}
    (i) For all $t$ and $f\in C^2[0,t]$,
    \begin{align}
        I_{t}(f)
        = & \psi_f(t)f(t)-\psi_f(0)f(0)-\int_0^t \left(f_s\psi'_f(s)+p(f_{s})H(\psi_f(s))\right)ds,\label{eq:lem-LDP-LB}
    \end{align}
    where for all $s\in[0,t]$, \[\psi_f(s)=(H')^{-1}\left(\frac{\dot{f}_s}{p(f_s)}\right).\]

    (ii) For all $f\in AC[0,t]$, $I_t(f)<+\infty$ iff
    \[
    \int_0^t\dot{f}_s(H')^{-1}\left(\frac{\dot{f}_s}{\bar{p}}\right)ds=\int_0^t|\dot{f}_s|(H')^{-1}\left(\frac{|\dot{f}_s|}{\bar{p}}\right)ds<+\infty.
    \]
\end{lemma}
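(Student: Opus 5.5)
The plan is to compute the Legendre transform $L(x,v)=\sup_\alpha(\alpha v-p(x)H(\alpha))$ explicitly and then integrate by parts.

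\emph{Step 1: the supremum is attained.} Since $G$ is even, positive and has all exponential moments, $H$ is $C^2$ with $H''(\alpha)=\int y^2e^{\alpha y}G(y)dy>0$. Moreover $H'(0)=0$ and $H'(\alpha)\to\pm\infty$ as $\alpha\to\pm\infty$. Hence, as recalled above, $H'$ is a $C^1$-diffeomorphism of $\R$. For fixed $x$ and $v$, the concave function $\alpha\mapsto \alpha v-p(x)H(\alpha)$ is maximized where $v=p(x)H'(\alpha)$, i.e.\ at $\alpha^*=(H')^{-1}(v/p(x))$; here we use $p(x)\geq \underline p>0$. This gives
\[
L(x,v)=v\,(H')^{-1}\Big(\frac{v}{p(x)}\Big)-p(x)H\Big((H')^{-1}\Big(\frac{v}{p(x)}\Big)\Big).
\]

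\emph{Step 2: proof of (i).} Take $f\in C^2[0,t]$. Then $\psi_f$ is $C^1$, since $(H')^{-1}$ is $C^1$, $\dot f$ is $C^1$, and $p$ is locally Lipschitz, so $\psi_f$ is at least absolutely continuous. By Step 1,
\[
I_t(f)=\int_0^t\big(\dot f_s\psi_f(s)-p(f_s)H(\psi_f(s))\big)ds.
\]
Integrating $\int_0^t \dot f_s\psi_f(s)ds$ by parts, which is legitimate for absolutely continuous functions, yields \eqref{eq:lem-LDP-LB}.

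\emph{Step 3: proof of (ii).} Set $\Lambda(w)=w(H')^{-1}(w)-H((H')^{-1}(w))=H^*(w)$, the Legendre transform of $H$. Then $L(x,v)=p(x)\Lambda(v/p(x))$. Because $G$ is even, $H$ is even, so $\Lambda$ is even, convex, nonnegative, with $\Lambda(0)=0$. Also $w(H')^{-1}(w)$ is even, which explains the equality of the two integrals in the statement.

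\emph{Step 4: comparing $L$ with the one-variable quantity.} I need two-sided bounds, uniform in $x$, of $L(x,v)$ in terms of $|v|(H')^{-1}(|v|/\bar p)$, up to additive constants and constant factors, since $t$ is finite.
\begin{itemize}
\item[] \emph{Upper bound.} Since $\Lambda\geq 0$ and $H\geq 0$ on the relevant range, $L(x,v)\le v\,(H')^{-1}(v/p(x))\le |v|(H')^{-1}(|v|/\underline p)$.
\item[] \emph{Lower bound.} The map $q\mapsto q\Lambda(v/q)$ is nonincreasing in $q>0$, because its derivative is $\Lambda(w)-w\Lambda'(w)=-H(\Lambda'(w))\le 0$. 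Hence $L(x,v)\ge \bar p\,\Lambda(v/\bar p)$. Next use $\Lambda(w)\ge \tfrac12 w\Lambda'(w)-C$, where $\Lambda'(w)=(H')^{-1}(w)$. By convexity, $\Lambda(w)\ge \Lambda(w/2)+\tfrac w2\Lambda'(w/2)$. Doing this carefully gives $\Lambda(w)\ge \frac{w}{2}(H')^{-1}(w/2)$.
\end{itemize}

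\emph{Step 5: the main obstacle.} The delicate point is that these bounds involve different arguments inside $(H')^{-1}$: $|v|/\underline p$ and $|v|/(2\bar p)$, whereas the statement uses $|v|/\bar p$. I need the integrability of $|\dot f|(H')^{-1}(c|\dot f|)$ not to depend on $c>0$. This should follow from the growth of $H'$. Since $H'(2\alpha)\le C e^{c'\alpha}H'(\alpha)$ can fail for heavy tails, I would instead use the convexity of $H$ with exponential moments to show $(H')^{-1}(cw)\le C_c\,(H')^{-1}(w)+C_c$. For instance, when $H$ grows at least exponentially, $(H')^{-1}$ grows at most like a logarithm, so rescaling the argument changes it by at most a multiplicative constant for large $w$. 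This comparison is exactly the step where I expect the real work. With it in hand, the upper and lower bounds of Step 4 prove (ii).
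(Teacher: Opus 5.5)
Your proof of (i) is correct; absolute continuity of $\psi_f$ suffices for the integration by parts, which is the right point since $p$ is only locally Lipschitz. Your two bounds in Step~4,
$\frac{|v|}{2}(H')^{-1}\big(\frac{|v|}{2\bar p}\big)\le L(x,v)\le |v|(H')^{-1}\big(\frac{|v|}{\underline p}\big)$,
are also correct. The lower one, via $\Lambda(w)\ge \Lambda(w/2)+\frac w2\Lambda'(w/2)\ge \frac w2(H')^{-1}(w/2)$, is a clean alternative to the paper's $H(\alpha)\le\frac{\alpha}{2}H'(\alpha)+C$ and needs no additive constant. (The inequality $\Lambda(w)\ge\frac12 w\Lambda'(w)-C$ that you announce first is a different statement from the one you actually derive; only the latter is used.) Both bounds, however, carry the wrong arguments inside $(H')^{-1}$, so all of (ii) rests on Step~5, and you leave Step~5 unproved.

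\textbf{The gap is Step~5.} The heuristic you give does not work. An upper bound on the growth of the increasing function $g=(H')^{-1}$, such as $g(w)=O(\log w)$, gives no control on $g(cw)/g(w)$. An increasing function staying below $\log w$ can be flat on long stretches and then rise steeply over $[B,cB]$, which makes $g(cw)/g(w)$, and even $g(cw)-Cg(w)$, unbounded. Convexity of $H$ only tells you that $g$ is increasing. What you need is a \emph{lower} bound on how $H'$ grows under dilation of its argument, and this comes from the evenness of $G$.

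Write $H'(\alpha)=\int y\sinh(\alpha y)G(y)\,dy$ and $H''(\alpha)=\int y^2\cosh(\alpha y)G(y)\,dy$. Since $\tanh x\le x$ for $x\ge0$, you get $|H'(\alpha)|\le |\alpha| H''(\alpha)$. Equivalently, $H'''(\alpha)=\int_0^\infty 2y^3\sinh(\alpha y)G(y)\,dy\ge0$ for $\alpha\ge0$, so $H'$ is convex on $[0,\infty)$ with $H'(0)=0$. Hence $\alpha\mapsto H'(\alpha)/\alpha$ is nondecreasing on $(0,\infty)$, so $H'(a\alpha)\ge aH'(\alpha)$ for $a\ge1$. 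Applying this at $\alpha=(H')^{-1}(w)$ gives $(H')^{-1}(aw)\le a\,(H')^{-1}(w)$ for $w\ge0$ and $a\ge1$; this is the paper's \eqref{eq:uocuoc}.

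With this in hand your bounds close:
$|v|(H')^{-1}(|v|/\underline p)\le \frac{\bar p}{\underline p}\,|v|(H')^{-1}(|v|/\bar p)$
and
$\frac{|v|}{2}(H')^{-1}\big(\frac{|v|}{2\bar p}\big)\ge \frac{|v|}{4}(H')^{-1}\big(\frac{|v|}{\bar p}\big)$,
which proves (ii).
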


\begin{proof}
Let $f\in C^2[0,t]$. From \eqref{eq:def-L} and \eqref{def=fonct-action}, we have that 
\[
I_{t}(f)= \int_0 ^t \sup_{\alpha\in \R} \big(\alpha \dot{f}_s- p(f_s)H(\alpha) \big) ds.
\]
For a given $s\in [0,t]$, the optimization problem $\sup_{\alpha\in \R} \big(\alpha \dot{f}_s- p(f_s)H(\alpha)\big)$ reaches its supremum for $\alpha$ that solves
$H'(\alpha)=\dot{f}_s/p(f_s)$,
i.e.\ for $\alpha=\psi_f(s)$. Hence
\[I_t(f)=\int_0^t \Big(\psi_f(s) \dot{f}_s - p(f_s)H\big(\psi_f(s)\big)\Big) \ ds.\]
For $f\in C^2[0,t]$, the function $\psi_f$ is $C^1$, hence integration by parts yields \eqref{eq:lem-LDP-LB}.

We now proceed with the proof of (ii).
Observe that $L(x,\beta)=p(x)\widetilde L(\beta/p(x))$, with $\widetilde L(v):=\sup_{\alpha\in\R}\{\alpha v-H(\alpha)\}$ convex, non-decreasing on $[0,+\infty)$ and non-increasing on $(-\infty,0]$ with $\widetilde L(0)=0$. Since the optimization problem in the definition of $\widetilde L$ has a unique solution, we obtain $\widetilde L(v)=v(H')^{-1}(v)-H(H'^{-1}(v))$. \\
    We next deduce from the change of variable $y\to -y$ and from the fact that $G$ is symmetric that for all $\alpha\in\R$
    \[
    H'(\alpha)=\int_\R\frac{y e^{\alpha y}-ye^{-\alpha y}}{2}G(y)dy
    \]
    and
    \[
    H''(\alpha)=\int_\R\frac{y^2 e^{\alpha y}+y^2e^{-\alpha y}}{2}G(y)dy.
    \]
    Hence, using that {$x\sinh x\leq x^2\cosh x$ for all $x\in\R^+$},
    \begin{equation}\label{etape_nic1}
    |H'(\alpha)|\leq\int_\R |y|\sinh(|\alpha y|)G(y)dy\leq\frac{1}{|\alpha|}\int_\R(\alpha y)^2\cosh(\alpha y)G(y)dy=|\alpha|H''(\alpha).
    \end{equation}
    This implies that for all $a>1$ and all $x\in\R$
    \begin{equation}
    \label{eq:uocuoc}
            |(H')^{-1}(x)|\leq |(H')^{-1}(ax)|\leq a|(H')^{-1}(x)|.
    \end{equation}
    Indeed, $H'$ is an increasing homeomorphism from $\R$ to itself and, since $G$ is symmetric, $H'(x)$ and $(H')^{-1}(x)$ have the same sign as $x$, so the first inequality is clear and it is enough to check the second inequality of~\eqref{eq:uocuoc} for $x>0$. In this case, we have
    \begin{align*}
        \ln (H')^{-1}(ax)-\ln (H')^{-1}(x) & =\int_1^a\frac{x((H')^{-1})'(rx)}{(H')^{-1}(rx)}dr=\int_1^a\frac{x}{(H')^{-1}(rx)H''((H')^{-1}(rx))}dr \\ & \leq\int_1^a\frac{dr}{r}=\ln a,
    \end{align*}using \eqref{etape_nic1} with $\alpha=(H')^{-1}(rx)$. Hence~\eqref{eq:uocuoc} is proved.

    Using the fact that there exists some $x_0>0$ such that, for all $x\geq x_0$ or $x\leq-x_0$,
    \[
    e^x-1\leq \frac{x}{2}e^x,
    \]
    and thus, using that $xe^x\geq -e^{-1}$ for all $x\in\mathbb{R}$, we deduce that for all $x\in\mathbb{R}$,
    \[
    e^x-1\leq \frac{x}{2}e^x+\frac{e^{-1}}{2}+e^{x_0}-1.
    \]
    This implies that, for all $\alpha\in\R$,
    \[
    H(\alpha)\leq \frac{\alpha}{2} H'(\alpha)+\frac{e^{-1}}{2}+e^{x_0}-1.
    \]
    Since $H(\alpha)\geq 0$ for all $\alpha$, we deduce that, for all $x,\beta\in\R$,
    \begin{equation}
    \label{eq:borne-L-2}
            \beta (H')^{-1}\left(\frac{\beta}{p(x)}\right)\geq L(x,\beta)\geq \frac{\beta}{2} (H')^{-1}\left(\frac{\beta}{p(x)}\right)-\bar{p}(\frac{e^{-1}}{2}+e^{x_0}-1).
    \end{equation}
    Then, it follows from~\eqref{eq:uocuoc} that
    \begin{equation*}
            |\beta|(H')^{-1}\left(\frac{|\beta|}{\bar{p}}\right)\leq |\beta|(H')^{-1}\left(\frac{|\beta|}{p(x)}\right)=\beta(H')^{-1}\left(\frac{\beta}{{p(x)}}\right)\leq\frac{\bar{p}}{\underline{p}}|\beta|(H')^{-1}\left(\frac{|\beta|}{\bar{p}}\right).
    \end{equation*}
    Therefore, Point~(ii) follows.
\end{proof}

\section{Large deviations estimates on $\mathbb{E} (N^{K,A}_t)$}
\label{sec:in-expectation}

In order to prove the large deviations estimates on $N^{K,A}_t$ of Theorems~\ref{thm:bornesup} and~\ref{thm:borneinf}, we
first need large deviations estimates on $\E (N^{K,A}_t)$.

\begin{theorem}\label{thm27bis}
For any $t>0$ and for any closed $A\subset\mathbb{D}[0,t]$, 
\begin{equation}
    \label{eq:coucou3}
  \limsup_{K\rightarrow +\infty} \frac{1}{\log K} \log\big(\mathbb{E}[N_t^{K,A}]\big)
\le  \sup_{f\in A} F_t(f),
\end{equation}
and for any open $G\subset\mathbb{D}[0,t]$
\begin{equation}
    \label{eq:coucou4}
  \liminf_{K\rightarrow +\infty} \frac{1}{\log K} \log\big(\mathbb{E}[N_t^{K,G}]\big)
\ge  \sup_{f\in G} F_t(f),
\end{equation}
     with the usual convention that $\sup\emptyset=-\infty$.
\end{theorem}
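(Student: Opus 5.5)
The plan is to start from the Feynman--Kac representation of Proposition~\ref{prop:FeynmanKac-bis}(iii),
\[
\E(N^{K,A}_t)=\int_\R K^{\beta^K_0(x)}\,\E_{\mu^K_{x,t}}\Big[K^{\int_0^t R(Y^K_s)ds}\,\mathbbm{1}_{Y^K\in A}\Big]dx .
\]
This reduces both bounds to a Varadhan-type lemma for the laws $\mu^K_{x,t}$, made uniform in the starting point $x$ over compacts, together with a control of the $x$-integral. There are two difficulties. The functional $f\mapsto\int_0^t R(f_s)ds$ is bounded above by $t\bar R$, but it is only continuous on $\D[0,t]$ (it is continuous for the Skorohod topology because $R$ is continuous and time changes perturb the integral only slightly, at least on sets of bounded range). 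The starting point is not fixed but integrated against $K^{\beta_0^K(x)}dx$.

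\emph{Upper bound \eqref{eq:coucou3}.} Split the integral into $|x|\le M$ and $|x|>M$. Since $R\le\bar R$ and $\beta^K_0\to\beta_0$ uniformly with $\beta_0(x)\le\bar\beta-\alpha|x|$, the tail part is at most $K^{\bar\beta+o(1)+t\bar R}\int_{|x|>M}K^{-\alpha|x|}dx$. Its exponent is $\le \bar\beta+t\bar R-\alpha M$, which is below any prescribed level for $M$ large. On $[-M,M]$ I would cover by finitely many small intervals $[x_j-\eta,x_j+\eta]$ and use local Lipschitz continuity of $\beta_0$ to replace $\beta^K_0(x)$ by $\beta_0(x_j)+O(\eta)+o(1)$. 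It then suffices to show
\[
\limsup_K \sup_{|x-x_j|\le\eta}\tfrac1{\log K}\log \E_{\mu^K_{x,t}}\big[K^{\int_0^tR}\mathbbm 1_A\big]\le \sup_{f\in A,\,|f(0)-x_j|\le\eta}\Big(\int_0^tR(f_s)ds-I_t(f)\Big)+o_\eta(1).
\]
For this I would run the standard Varadhan upper-bound argument, uniformly on compacts. By Lemma~\ref{lem:expo-tight}, restrict to a compact $\mathcal K\subset\D[0,t]$ at cost $K^{-M'}$, with $M'$ large. Cover $A\cap\mathcal K$ by finitely many small closed balls on which $\int R$ oscillates by less than $\varepsilon$, which is possible by continuity on the compact. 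On each ball, apply the uniform LDP upper bound. This bound follows from the uniform Laplace principle \eqref{eq:bft-unif-compact-2} with test functions $k\,d_{\rm Sko}(\cdot,B)$, exactly as in Step~1 of Lemma~\ref{lem:expo-tight}, together with the compactness of \eqref{eq:bft-unif-compact}. Finally, let $\eta,\varepsilon\to0$, using lower semicontinuity of $I_t$ and the upper semicontinuity of $f\mapsto\beta_0(f(0))+\int R-I_t$ on compacts, to recover $\sup_A F_t$.

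\emph{Lower bound \eqref{eq:coucou4}.} Take $f\in G$ with $F_t(f)>-\infty$, so $f\in AC$, $I_t(f)<\infty$, and set $x_0=f(0)$. Choose $\delta$ with $B_{2\delta}(f)\subset G$ and with $|\int_0^tR(g)-\int_0^tR(f)|\le\varepsilon$ for $g\in B_{2\delta}(f)$. For $x$ near $x_0$, I would use the shifted path $f+(x-x_0)$, which lies in $B_\delta(f)$ when $|x-x_0|<\delta$ and has $I_t(f+x-x_0)\to I_t(f)$ as $x\to x_0$, since $L(\cdot,v)$ is continuous in $x$ and the bounds \eqref{eq:borne-L-2} give domination. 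The uniform Laplace principle (lower bound form) on the compact $[x_0-\eta,x_0+\eta]$ then yields
\[
\E_{\mu^K_{x,t}}[K^{\int R}\mathbbm1_G]\ge K^{\int_0^tR(f)-\varepsilon-I_t(f)-\varepsilon+o(1)}
\]
uniformly in $x$. Integrating over $[x_0-\eta,x_0+\eta]$ against $K^{\beta_0(x_0)-O(\eta)+o(1)}$ gives $\liminf\ge F_t(f)-3\varepsilon-O(\eta)$.

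The main obstacle is the uniformity in $x$ of the LDP lower bound. Theorem~\ref{thm:main-PGD-bis} provides the Laplace principle uniformly on compacts, but the lower bound $\liminf\inf_x\frac1{\log K}\log\mu^K_{x,t}(B_\delta(f_x))\ge -I_t(f)$ requires turning an open-set bound into a uniform one. I would derive it from \eqref{eq:bft-unif-compact-2} applied to the bounded continuous functions $\phi=k\,d_{\rm Sko}(\cdot,f+x-x_0)\wedge k\delta$. Alternatively, I would use the shift-continuity argument of Step~2 of Lemma~\ref{lem:expo-tight} to compare with the fixed starting point $x_0$.
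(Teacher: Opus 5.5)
Your proposal is correct and is essentially the paper's proof. Both use the Feynman--Kac reduction and uniform exponential tightness (Lemma~\ref{lem:expo-tight}); the upper bound then combines the uniform Laplace principle with Skorohod-distance test functions, a finite covering and lower semicontinuity of $I_t$, and the lower bound uses the shifted path $f+(x-x_0)$ with domination from \eqref{eq:borne-L-2}. The only organizational difference is how the initial point is localized: you partition the $x$-range, whereas the paper lets membership in $B_{\delta_f}(f)$ localize $g(0)$ and uses continuity of $g\mapsto\beta^K_0(g(0))+\int_0^tR(g_s)\,ds$ on that ball.

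One small point concerns the uniform lower bound. Since \eqref{eq:bft-unif-compact-2} is stated for a fixed $\phi$, it is cleaner to use an $x$-independent test function such as the paper's $k\big(\frac{3}{\delta}d_{\rm Sko}(\cdot,B_{2\delta/3}(f))\wedge 1\big)$; your $x$-dependent choice can still be rescued because it is $k$-Lipschitz in $x$. Your fallback via the shift-coupling of Lemma~\ref{lem:expo-tight} would not work, because that argument gives no control uniform in $K$.
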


Note that, in contrast with Theorems~\ref{thm:bornesup} and~\ref{thm:borneinf}, there is no state constraint in the right-hand sides of~\eqref{eq:coucou3} and~\eqref{eq:coucou4}. This shows the fundamental difference between the behavior of the stochastic process and its expectation, already observed in~\cite{biggins1995,berestyckibrunetharrisharrisroberts}.

\begin{proof}[Proof of the upper bound in Theorem~\ref{thm27bis}]
    Let $A\subset\mathbb{D}[0,t]$ be closed. 
 Assume first that 
 \[\sup_{f\in A}F_t(f)> -\infty.\]
    Since $\beta_0(x)\to -\infty$ when $x\to\pm\infty$ and $\beta^K_0$ converges to $\beta_0$ for the uniform norm, there exists a compact subset $B$ of $\R$ such that 
    \begin{equation}
        \label{eq:borne-B}
        \forall x\in B^c,\quad \beta^K_0(x)\leq -t(\bar{b}+\bar{p})-1+\sup_{f\in A} F_t(f).
    \end{equation}
    Let $A_0$ be the compact set given by Lemma~\ref{lem:expo-tight} with this choice of $B$ and with the constant 
    \begin{equation}
        \label{eq:choix-M}
        M=\sup_{K\geq 1}\sup_{x\in\R}\beta^K_0(x)+(\bar{b}+\bar{p})t+1-\sup_{f\in A} F_t(f).
    \end{equation}  
    Fix $\varepsilon>0$ and $f\in A\cap A_0$. Since $\beta_0$, $b$, $d$ and $p$ are continuous, there exists $\delta_f>0$ such that, for all $K$ large enough, for all $g\in B_{\delta_f}(f)$,
    \[
    \beta^K_0(g(0))+\int_0^t R(g(s))ds\leq \beta_0(f(0))+\int_0^t R(f(s))ds+\varepsilon.
    \]
    In addition, since the function $I_t$ is lower semi-continuous on $\mathbb{D}[0,t]$ (cf.~\cite[Thm.\,3.2.1]{Fathi-book}), we can also assume reducing $\delta_f>0$ if necessary that, for all $x\in[f(0)-\delta_f,f(0)+\delta_f]$ and all $g\in B_{2\delta_f}(f)$,
    \begin{equation}
    \label{eq:I-sci}
            I_{t,x}(g)\geq I_{t,f(0)}(f)-\varepsilon.
    \end{equation}
    
    We introduce for all $k\geq 1$ the function $\phi_{k,f}$ on $\mathbb{D}[0,t]$ defined for all $g\in\mathbb{D}[0,t]$ by
    \[  
    \phi_{k,f}(g)=k\ [(d_\text{Sko}(g, B_{\delta_f}(f)))\wedge 1].
    \]
    Then 
    \begin{multline}
        \int_\R K^{\beta^K_0(x)}\E_{\mu^K_{x,T}}\Big[\exp\Big(\log K\int_0^t R(Y_s^K) ds \Big)\mathbbm{1}_{(Y^K_s)_{s\in[0,t]}\in B_{\delta_f}(f)}\Big]dx \\ 
        \begin{aligned}
            & \leq K^{\beta_0(f(0))+\int_0^t R(f(s))ds+\varepsilon}\int_{f(0)-\delta_{f}}^{f(0)+\delta_{f}}\mu^K_{x,t}(B_{\delta_f}(f))dx \\
            & \leq 2\delta_f K^{\beta_0(f(0))+\int_0^t R(f(s))ds+\varepsilon}\sup_{x\in[f(0)-\delta_f,f(0)+\delta_f]}\mathbb{E}_{\mu^K_{x,t}}(K^{-\phi_{k,f}}). 
        \end{aligned}\label{eq:blablabla}
    \end{multline}
    Since the function $\phi_{k,f}$ is bounded continuous, it follows from the Laplace principle uniform on compact sets of Theorem~\ref{thm:main-PGD-bis} that
    \begin{multline*}
        \limsup_{K\to\infty}\frac{1}{\log K}\log\left(\int_\R K^{\beta^K_0(x)}\E_{\mu^K_{x,T}}\Big[\exp\Big(\log K\int_0^t R(Y_s^K)ds\Big)\mathbbm{1}_{(Y^K_s)_{s\in[0,t]}\in B_{\delta_f}(f)} \Big]dx\right) \\   
 \leq \beta_0(f(0))+\int_0^t R(f(s))ds+\varepsilon-\inf_{g\in\mathbb{D}[0,t]}(\phi_{k,f}(g)+I_{t,x}(g)).
    \end{multline*}
    Now, using the definition of $\phi_k$ and~\eqref{eq:I-sci}
    \begin{equation*}
    \inf_{g\in\mathbb{D}[0,t]}(\phi_{k,f}(g)+I_{t,x}(g))\geq \inf_{g\not\in B_{2\delta_f}(f)}\phi_{k,f}(g)\wedge \inf_{g\in B_{2\delta_f}(f)}I_{t,x}(g) \geq ((\delta_f\wedge 1) k)\wedge (I_{t}(f)-\varepsilon).   
        \end{equation*}
        Since $k$ was arbitrary, choosing it large enough entails
    \begin{multline}
        \limsup_{K\to\infty}\frac{1}{\log K}\log\left(\int_\R K^{\beta^K_0(x)}\E_{\mu^K_{x,T}}\Big[\exp\Big(\log K\int_0^t R(Y_s^K)ds\Big)  \mathbbm{1}_{(Y^K_s)_{s\in[0,t]}\in B_{\delta_f}(f)}\Big]dx\right) \\   
 \leq F_t(f)+2\varepsilon. \label{eq:borne-sup-esp-boule}
    \end{multline} 

    Since $A\cap A_0$ is compact, there exists $m<\infty$ and $f_1,\ldots, f_m\in A\cap A_0$ such that
    \[
    A\cap A_0\subset\bigcup_{j=1}^m B_{\delta_{f_j}}(f_j).
    \]
    Now, it follows from Proposition~\ref{prop:FeynmanKac-bis} that
    \begin{align*}
        \E N^{K,A}_t 
        & \leq \sum_{j=1}^m\int_\R K^{\beta^K_0(x)}\E_{\mu^K_{x,T}}\Big[\exp\Big(\log K\int_0^t R(Y_s^K)  ds \Big)\mathbbm{1}_{(Y^K_s)_{s\in[0,t]}\in B_{\delta_{f_j}}(f_j)}\Big]dx \\
        & +\int_{B^c} K^{\beta^K_0(x)+(\bar{b}+\bar{p})t}\mu^K_{x,T}(A_0^c)dx +\text{Leb}(B) K^{\sup_{y\in\R}\beta^K_0(y)+(\bar{b}+\bar{p})t}\sup_{x\in B}\mu^K_{x,T}(A_0^c).
    \end{align*}
    Therefore, using Lemma~\ref{lem:expo-tight} and Eq.~\eqref{as:beta0},~\eqref{eq:borne-B},~\eqref{eq:choix-M} and~\eqref{eq:borne-sup-esp-boule},
    \begin{multline*}
        \limsup_{K\to\infty}\frac{1}{\log K}\log \E N^{K,A}_t \leq \max\{F_t(f_j)+2\varepsilon,\ 1\leq j\leq m\}\vee \left(\sup_{f\in A}F_t(f)-1\right) \\ \vee \left(\sup_{y\in\R,\,K\geq 1}\beta^K_0(y)+(\bar{b}+\bar{p})t-M\right)\leq \sup_{f\in A} F_t(f)+2\varepsilon.
    \end{multline*}
    Since $\varepsilon>0$ was arbitrary, the proof is completed in the case where $\sup_{f\in A}F_t(f)>-\infty.$ \\

    In the case where $\sup_{f\in A}F_t(f)=-\infty$, let $C>0$ be fixed. For all $f\in A$, there exists $\delta_f>0$ 
    such that, for all $x\in[f(0)-\delta_f,f(0)+\delta_f]$ and all $g\in B_{2\delta_f}(f)$, $I_{t,x}(g)\geq M$, where
    \[
    M=\sup_{K\geq 1}\sup_{x\in\R}\beta^K_0(x)+(\bar{b}+\bar{p})t+C.
    \]
    Following the same argument as in~\eqref{eq:blablabla} (with $k=1$), we deduce that
    \[
    \limsup_{K\to\infty}\frac{1}{\log K}\log\left(\int_\R K^{\beta^K_0(x)}\E_{\mu^K_{x,T}}\Big[\exp\Big(\log K\int_0^t R(Y_s^K)ds\Big)\mathbbm{1}_{(Y^K_s)_{s\in[0,t]}\in B_{\delta_f}(f)} \Big]dx\right) \leq -C
    \]
    We conclude as above that 
    \[
    \limsup_{K\to\infty}\frac{1}{\log K}\log \E N^{K,A}_t \leq -C.
    \]
    Since $C>0$ was arbitrary, the proof of the upper bound in Theorem~\ref{thm27bis} is complete.
\end{proof}

\begin{proof}[Proof of the lower bound in Theorem~\ref{thm27bis}]
    We fix $t>0$ and $G\subset\mathbb{D}[0,t]$ open. The proof is divided into 3 steps.
    \medskip

    \noindent\textit{Step 1.} We first prove using the Laplace principle uniform on compacts the following property: for all $f\in\mathbb{D}[0,t]$ such that $I_{t,f(0)}(f)<+\infty$ and all $\delta>0$,
    \begin{equation}
        \label{eq:step1-LB-expectation}
        \liminf_{K\to\infty}\inf_{x\in[f(0)-\delta/3,f(0)+\delta/3]}\frac{1}{\log K}\log\mu^K_{t,x}(B_\delta(f))\geq -\sup_{x\in[f(0)-\delta/3,f(0)+\delta/3]}\inf_{g\in B_{2\delta/3}(f)}I_{t,x}(g).
    \end{equation}

    For all integer $k\geq 1$, we define the function $\phi_k$ on $\mathbb{D}[0,t]$ as: for all $g\in\mathbb{D}[0,t]$
    \[
    \phi_k(g)=k\left(\frac{3}{\delta}d_\text{Sko}(g,B_{2\delta/3}(f))\wedge 1\right).
    \]
    Since $\phi_k$ is bounded continuous, it follows from the Laplace principle uniform on compacts that
    \begin{align*}
            \liminf_{K\to\infty}\inf_{x\in[f(0)-\delta/3,f(0)+\delta/3]}\frac{1}{\log K}\log\E_{\mu^K_{t,x}}(K^{-\phi_k}) & \geq -\sup_{x\in[f(0)-\delta/3,f(0)+\delta/3]} \inf_{g\in\mathbb{D}[0,t]}(\phi_k(g)+I_{t,x}(g)) \\
            & \geq - \sup_{x\in[f(0)-\delta/3,f(0)+\delta/3]} \inf_{g\in B_{2\delta/3}(f)}I_{t,x}(g),
    \end{align*}
    where we used that $\phi_k=0$ on $B_{2\delta/3}(f)$. We observe that, for all $x\in\R$,
    \[
    \E_{\mu^K_{t,x}}(K^{-\phi_k})\leq K^{-k}+\mu^K_{t,x}(B_\delta(f)),
    \]
    so we deduce that
    \begin{multline*}    
    \max\left\{\liminf_{K\to\infty}\inf_{x\in[f(0)-\delta/3,f(0)+\delta/3]}\frac{1}{\log K}\log\mu^K_{t,x}(B_\delta(f))\,;\, -k\right\} \\ \geq -\sup_{x\in[f(0)-\delta/3,f(0)+\delta/3]} \inf_{g\in B_{2\delta/3}(f)}I_{t,x}(g).
    \end{multline*}
    Letting $k\to\infty$ yields~\eqref{eq:step1-LB-expectation}.
    \medskip

    \noindent\textit{Step 2.} We now prove the following weak continuity property of $I_{t,x}$: for all $f\in\mathbb{D}[0,t]$ such that $I_{t,f(0)}(f)<+\infty$ and all $\varepsilon>0$, there exists $\delta>0$ such that, for all $x\in[f(0)-\delta/2,f(0)+\delta/2]$, there exists $g\in B_\delta(f)$ with $g(0)=x$ such that $I_{t,x}(g)\leq I_{t,f(0)}(f)+\varepsilon$. 
    
    To prove this, we recall from Step 1 of the proof of Lemma~\ref{lem:fct-tx}~(ii) that $I_{t,f(0)}(f)<+\infty$ implies that $|\dot{f}|(H')^{-1}(|\dot{f}|/\bar{p})\in L^1[0,t]$. Defining for all $x\in\R$ and $s\in[0,t]$ $f^{(x)}_s=x-f_0+f_s$ and observing that $\dot{f}^{(x)}=\dot{f}$, it then follows from~\eqref{eq:uocuoc} and~\eqref{eq:borne-L-2} that there exists a constant $C$ such that, for all $x$ in a neighborhood of $f(0)$, $L(f^{(x)}_s,\dot{f}^{(x)}_s)\leq C |\dot{f}|(H')^{-1}(|\dot{f}|/\bar{p})$. Since $L(\cdot,\cdot)$ is continuous with respect to both variables, we deduce from Lebesgue's theorem that $I_{t,x}(f^{(x)})\to I_{t,f(0)}(f)$ when $x\to f(0)$, hence the result.

    \medskip

    \noindent\textit{Step 3.} We now conclude the proof as follows: first, if $\inf_{g\in G}I_{t,g(0)}(g)=+\infty$, there is nothing to prove. So assume the converse and fix $\varepsilon>0$. Then $\sup_{g\in G}F_t(g)>-\infty$, so we can take $f\in G$ such that
    \[
    F_t(f)\geq \sup_{g\in G}F_t(g) -\varepsilon.
    \]
    Since $G$ is open and in view of Steps 1 and 2, there exists $\delta>0$ such that $B_\delta(f)\subset G$ and
    \[
    \liminf_{K\to\infty}\inf_{x\in[f(0)-\delta/3,f(0)+\delta/3]}\frac{1}{\log K}\log\mu^K_{t,x}(B_\delta(f))\geq -I_{t,f(0)}(f)-\varepsilon.
    \]
    Reducing $\delta>0$ if necessary, we can assume that, for $K$ large enough, for all $g\in B_\delta(f)$,
    \[
    \beta^K_0(g(0))+\int_0^t R(g(s))ds\geq \beta^K_0(f(0))+\int_0^t R(f(s))ds-\varepsilon.
    \]
    Now, the Feynman-Kac formula of Proposition~\ref{prop:FeynmanKac-bis} implies that
    \begin{align*}
        & \frac{1}{\log K}\log \E N^{K,G}_t \\ & \geq\frac{1}{\log K}\log \int_{f(0)-\delta/3}^{f(0)+\delta/3} K^{\beta^K_0(x)}\E_{\mu^K_{t,x}}\left[\exp\left(\log K\int_0^t R(Y^K_s)ds\right)\mathbbm{1}_{(Y^K_s)_{s\in[0,t]}\in B_\delta(f)}\right] \\ & \geq \beta^K_0(f(0))+\int_0^t R(f(s))ds-\varepsilon+\frac{1}{\log K}\log\left(\frac{2\delta}{3}\inf_{x\in [f(0)-\delta/3,f(0)+\delta/3]}\mu^K_{t,x}(B_\delta(f))\right).
    \end{align*}
    Therefore,
    \[
    \liminf_{K\to\infty}\frac{1}{\log K}\log \E N^{K,G}_t\geq F_t(f)-2\varepsilon\geq \sup_{g\in G}F_t(g)-3\varepsilon.
    \]
 Since $\varepsilon>0$ was arbitrary, the lower bound in Theorem~\ref{thm27bis} is proved.
\end{proof}

\section{Proof of Theorem \ref{thm:bornesup}}
\label{sec:upper-bound}

The proof relies on the following proposition.

\begin{prop}
\label{prop:BS1}
For any $t>0$ and all closed set $A\subset \mathbb{D}[0,t]$, in probability
$$\limsup_{K\to\infty}\frac{\log N^{K,A}_t}{\log K} \leq \sup\{F_t(f) ; f\in A\}.$$
\end{prop}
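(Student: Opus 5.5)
This follows from the upper bound of Theorem~\ref{thm27bis} and Markov's inequality. Set $S:=\sup\{F_t(f);\,f\in A\}$ and fix $\varepsilon>0$.

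We first treat the case $S>-\infty$. By~\eqref{eq:coucou3}, for $K$ large enough we have $\E[N^{K,A}_t]\le K^{S+\varepsilon/2}$. Since $N^{K,A}_t$ is a nonnegative integer-valued random variable, Markov's inequality gives
\[
\P\Big(\frac{\log N^{K,A}_t}{\log K}\ge S+\varepsilon\Big)=\P\big(N^{K,A}_t\ge K^{S+\varepsilon}\big)\le \frac{\E[N^{K,A}_t]}{K^{S+\varepsilon}}\le K^{-\varepsilon/2}\xrightarrow[K\to\infty]{}0.
\]
On the event $\{N^{K,A}_t=0\}$ we use the convention $\log 0=-\infty$, and this event does not contribute.

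Next, the case $S=-\infty$. Here the claim should be read as: for every $C>0$, $\P(\log N^{K,A}_t/\log K\ge -C)\to 0$. The upper bound part of Theorem~\ref{thm27bis} in this case gives $\E[N^{K,A}_t]\le K^{-C-1}$ for large $K$. Markov's inequality then shows $\P(N^{K,A}_t\ge K^{-C})\le K^{-1}\to0$.

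The only point needing care is exactly this degenerate case $S=-\infty$. Even there, since $N^{K,A}_t$ is integer valued, one also gets $\P(N^{K,A}_t\ge1)\le \E[N^{K,A}_t]\to0$, so the population with lineage in $A$ is asymptotically extinct. No other obstacle is expected: all the large deviations work has already been done in Theorem~\ref{thm27bis}. The real difficulty of Theorem~\ref{thm:bornesup}, namely introducing the state constraint $F_s(f)\ge0$, will come afterwards. There one applies Proposition~\ref{prop:BS1} on intermediate time grids together with the integer-valuedness of the counts.
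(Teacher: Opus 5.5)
Your proof is correct and follows the paper's own argument: Markov's inequality combined with the upper bound \eqref{eq:coucou3} of Theorem~\ref{thm27bis}, giving $\P(N^{K,A}_t\ge K^{S+\varepsilon})\le K^{-\varepsilon/2}$ for $K$ large. Your separate treatment of the degenerate case $\sup_{f\in A}F_t(f)=-\infty$ is a small addition that the paper leaves implicit.
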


\begin{proof} We introduce the notation
\[
F_t(A)=\sup\{F_t(f); f\in A\}.
\]
Using  Markov inequality and the upper bound in Theorem \ref{thm27bis}, we have for all fixed $\delta>0$
\begin{align*}
    \limsup_{K\to\infty}\frac{1}{\log K} \log\P(N^{K,A}_t \ge K^{F_t(A)+\delta}) \le  \limsup_{K\to\infty}\frac{1}{\log K} \log\left(\frac{\E(N^{K,A}_t)} {K^{F_t(A)+\delta}}\right)
    \le - \delta.
\end{align*}
Therefore, for $K$ large enough,
$$ \P(N^{K,A}_t \ge K^{F_t(A)+\delta}) \le K^{-\delta/2}.$$
Hence it follows that, in probability,
$$\limsup_{K\to\infty}\frac{\log N^{K,A}_t}{\log K} \leq F_t(A)+ \delta.$$
Since $\delta$ was arbitrary, the proof is complete.
\end{proof}

\begin{proof}[Proof of Theorem~\ref{thm:bornesup}]

\noindent\textit{Step 1: Proof  for $A$ compact.} 

Fix $k\geq 1$. For any $f\in\mathbb{D}[0,t]$ such that $I_t(f)<\infty$, there exists $\delta_f>0$ such that, for all $g\in \text{adh}(B_{\delta_f}(f))$, 
\begin{equation}
\forall s\in [0,t],\quad F_s(g)\leq F_s(f)+\frac{1}{k}.\label{eq:uniform-upp-cont}
\end{equation}
Indeed, if it was not true, there would exist $s_n\in[0,t]$ and $g_n\in\mathbb{D}[0,t]$ converging to $f$ such that $F_{s_n}(g_n)>F_{s_n}(f)+1/k$. After extraction, we can assume that $s_n\to s$. We then deduce from the lower semi-continuity of $I_{(s-\delta)\vee 0}$ (see \cite[Thm.\,3.2.1]{Fathi-book}) that, for all $\delta>0$,
\begin{align*}
    F_{(s-\delta)\vee 0}(f) & \geq \limsup_n F_{(s-\delta)\vee 0}(g_n) \\ & \geq \limsup_n F_{s_n}(g_n)-\bar{R}\delta \\ & \geq \limsup_n F_{s_n}(f)-\bar{R}\delta+\frac{1}{k}=F_s(f)-\bar{R}\delta+\frac{1}{k}.
\end{align*}
Therefore,
\[
F_s(f)=\lim_{\delta\to 0}F_{(s-\delta)\vee 0}(f) \geq F_s(f)+\frac{1}{k},
\]
which is absurd.\\

Similarly, if $f\in \mathbb{D}[0,t]$ is such that $I_t(f)=+\infty$, there exists $\delta_f>0$ such that, for all $g\in \text{adh}(B_{\delta_f}(f))$, 
\[
\forall s\in [0,t],\quad F_s(g)\leq -k.
\]\\

Now, assume that $A$ is compact. Since
\[
A\subset\bigcup_{f\in A}B_{\delta_f}(f),
\]
there exists $n\geq 1$ and $f_1,\ldots,f_n\in A$ such that, denoting $\delta_i=\delta_{f_i}$,
\[
A\subset\bigcup_{i=1}^nB_{\delta_i}(f_i).
\]

Given $i\in\{1,\ldots,n\}$, if there exists $s\leq t$ such that $F_s(f_i)<-1/k$, we can apply Proposition~\ref{prop:BS1} to show that, in probability,
\[
\limsup_{K\to\infty}\frac{\log N^{K,\text{cl}(B_{\delta_i}(f_i))}_s}{\log K}\leq F_s(f_i)+1/k<0.
\]
Since $N^{K,\text{cl}(B_{\delta_i}(f_i))}_s$ is integer-valued, this implies that, $N^{K,\text{cl}(B_{\delta_i}(f_i))}_s=0$ for $K$ large enough, hence with probability converging to one,
\[
\frac{\log N^{K,\text{cl}(B_{\delta_i}(f_i))}_s}{\log K}=-\infty
\]
and, since $t\geq s$,
\[
\frac{\log N^{K,\text{cl}(B_{\delta_i}(f_i))}_t}{\log K}=-\infty.
\]
If $F_t(f_i)>-\infty$ and for all $s\leq t$, $F_s(f_i)\geq-1/k$, we deduce from Proposition~\ref{prop:BS1} that, in probability,
\[
\limsup_{K\to\infty}\frac{\log N^{K,\text{cl}(B_{\delta_i}(f_i))}_t}{\log K}\leq F_t(f_i)+1/k.
\]
Finally, if $F_t(f_i)=-\infty$, we deduce from Proposition~\ref{prop:BS1} that, in probability,
\[
\limsup_{K\to\infty}\frac{\log N^{K,\text{cl}(B_{\delta_i}(f_i))}_t}{\log K}\leq -k.
\]
Observing that
\[
N_t^{K,A}\leq \sum_{i=1}^n N^{K,\text{cl}(B_{\delta_i}(f_i))}_t,
\]
we obtain in probability
\begin{align}
    \limsup_{K\to\infty}\frac{\log N^{K,A}_t}{\log K} & \leq\sup\left\{F_t(f_i)+\frac{1}{k},\ i\in\{1,\ldots,n\}\text{ s.t.\ }F_s(f_i)\geq-\frac{1}{k}\ \forall s\in[0,t]\right\}\vee(-k) \nonumber \\ & \leq\left(\frac{1}{k}+\sup\left\{F_t(f),\ f\in A\text{ s.t.\ }F_s(f)\geq -\frac{1}{k}\ \forall s\in[0,t]\right\}\right)\vee(-k) .\label{eq:preuve-borne-sup-2}
\end{align}
Now, when $k$ converges to $+\infty$,
\[
\sup\left\{F_t(f),\ f\in A\text{ s.t.\ }F_s(f)\geq -\frac{1}{k}\ \forall s\in[0,t]\right\}\xrightarrow[]{}\sup\left\{F_t(f),\ f\in A\text{ s.t.\ }F_s(f)\geq 0\ \forall s\in[0,t]\right\},
\]
since otherwise, there would exist $\eta>0$ and a sequence $(f_k)_{k\geq 1}$ in $A$ such that $F_s(f_k)\geq-1/k$ for all $s\in[0,t]$ and $F_t(f_k)\geq \sup\{F_t(f),\ f\in A,\ F_s(f)\geq 0\ \forall s\in[0,t]\}+\eta$. Since $A$ is compact, we can assume after extraction that $f_k$ converges to some $g\in A$. Since $F_s$ is upper semi-continuous for all $s\in[0,t]$, $F_s(g)\geq\limsup_k F_s(f_k)\geq 0$ for all $s\in[0,t]$ and $F_t(g)\geq \sup\{F_t(f),\ f\in A,\ F_s(f)\geq 0\ \forall s\in[0,t]\}+\eta$, which is a contradiction.

Hence, letting $k\to\infty$ in~\eqref{eq:preuve-borne-sup-2} ends the proof of Theorem~\ref{thm:bornesup} for $A$ compact.

\medskip\noindent\textit{Step 2: Proof of Theorem~\ref{thm:bornesup} for $A$ closed.} Following standard arguments in large deviations, to conclude, it only remains to prove that, for all $M_0$, there exists a compact set $C_{M_0}\subset\mathbb{D}[0,t]$ such that, almost surely
\begin{equation}
\label{eq:pf-borne-sup-3}
    \limsup_{K\to+\infty} \frac{\log N^{K,C_{M_0}^c}_t}{\log K}\leq -M_0.
\end{equation}
This can be deduced from Proposition~\ref{prop:FeynmanKac-bis} and Lemma~\ref{lem:expo-tight} as follows: let $C_{M_0}\subset\mathbb{D}[0,t]$ be the compact set of Lemma~\ref{lem:expo-tight} for $B=\{x\in\R,\ \beta_0(x)\geq -2-t\bar{R}-M_0\}$ (which is compact by Assumption \eqref{as:beta0}), 
and $M=M_0+t\bar{R}+\bar{\beta}+2$, i.e.
\[
\limsup_{K\to\infty}\sup_{x\in B}\frac{1}{\log K}\log\mu^K_{x,t}(C_{M_0}^c)\leq -M_0-\bar{\beta}-t\bar{R}-2.
\]
By Proposition~\ref{prop:FeynmanKac-bis}, for $K$ large enough,
\[
\mathbb{E} N^{K,C_{M_0}^c}_t\leq \int_{B^c} K^{(-2-t\bar{R}-M_0)\wedge(\bar{\beta}-\alpha|x|)}K^{t\bar{R}}dx+\text{Leb}(B)\sup_{x\in B} K^{\bar{\beta} +t\bar{R}+\frac{1}{2}}\mu^K_{x,t}(C_{M_0}^c),
\]
so
\[
\limsup_{K\to\infty}\frac{\log \mathbb{E} N^{K,C_{M_0}^c}_t}{\log K}\leq -M_0-\frac{3}{2}.
\]
To conclude, we proceed as in the proof of Proposition~\ref{prop:BS1}:
\[
\P\left(N^{K,C_{M_0}^c}_t\geq K^{-M_0}\right) \leq\frac{\E N^{K,C_{M_0}^c}_t}{K^{-M_0}}\leq K^{-5/4}
\]
for $K$ large enough. Then Borel-Cantelli's lemma entails~\eqref{eq:pf-borne-sup-3}.
\end{proof}

\section{Proof of Theorem~\ref{thm:borneinf}}
\label{sec:lower-bound}

We follow the general approach of~\cite{berestyckibrunetharrisharrisroberts}, using moment estimates following ideas in \cite{mallein}. We start with considering for $t\geq 0$, $\varepsilon>0$ and $f\in \D[0,t]$ such that $I_t(f)<+\infty$. Recall that $B_\varepsilon(f)$ is the open ball $\{g\in \D[0,t],\ \dSko(g,f)<\varepsilon\}$. 
Let us denote by
$$N^{K,\varepsilon,f}_t=N^{K,B_\varepsilon(f)}_t 
= \sum_{u\in V^K_{t\log K}} \ind_{B_\varepsilon(f)}(X^{K,u}_{s \log K}, s\leq t), $$
the number of particles remaining in the tube of width $\varepsilon$ around $f$ until time $t$.
The key ingredient of the proof is the following lemma.

\begin{lemma}\label{lemma:tube}
Let us consider $t\geq 0 $ and $f\in AC[0,t]$ such that $I_t(f)<+\infty$ and such that for all $s\leq t$, $F_s(f)>0$. Almost surely, for all sufficiently small $\varepsilon>0$, 
    \begin{equation}
        \liminf_{K\rightarrow +\infty} \frac{1}{\log K} \log N^{K,\varepsilon,f} \geq F_t(f).\label{etape17}
    \end{equation}
\end{lemma}
    
Assuming Lemma \ref{lemma:tube}, we can then complete the proof of Theorem~\ref{thm:borneinf} as follows.
Let $t>0$ and $G$ be an open subset of $\mathbb{D}[0,t]$. If
\[
\sup\{F_t(g); g\in G, \; \forall s\in [0,t],\; F_s(g)> 0\}=-\infty,
\]
then we have nothing to prove. Otherwise, for any $\delta>0$, we can find $f\in G$ such that $F_s(f)>0$ for all $s\in[0,t]$ and
\[
F_t(f)\geq \sup\{F_t(g); g\in G, \; \forall s\in [0,t],\; F_s(g)> 0\}-\delta.
\]
Since $G$ is open, there exists $\varepsilon>0$ such that $B_\varepsilon(f)\subset G$. Hence, reducing $\varepsilon>0$ if necessary, we deduce from Lemma~\ref{lemma:tube} that, almost surely,
\begin{align*}
    \liminf_{K\to\infty}\frac{1}{\log K}\log N^{K,G}_{t} & \geq \liminf_{K\to\infty}\frac{1}{\log K}\log N^{K,\varepsilon,f}_{t} \\ & \geq F_t(f) \\ & \geq \sup\{F_t(g); g\in G, \; \forall s\in [0,t],\; F_s(g)> 0\}-\delta.
\end{align*}
Since $\delta>0$ was arbitrary, the proof of Theorem~\ref{thm:borneinf} is completed. \\

The proof of Lemma \ref{lemma:tube} is divided into two subsections. In Section \ref{sec:estimateNfeps}, we establish an upper bound of $\E\big[\big(N^{K,\varepsilon,f}_t\big)^2\big]$ with the idea of \cite{mallein} in mind, by establishing a many-to-two formula (or formula for forks). We conclude in Section \ref{sec:preuve_lemme_5.1}.

\subsection{Estimates for the second moment of $N^{K,\varepsilon,f}$}\label{sec:estimateNfeps}

A key ingredient to compare $N^{K,\varepsilon,f}_t$ with its expectation is to control its second moment. For this purpose, we first establish the next lemma.

\begin{lemma} \label{lem-appli-fourches}
Let us consider $t\geq 0 $ and $f\in AC[0,t]$ such that $I_t(f)<+\infty$ and such that for all $s\leq t$, $F_s(f)>0$. For all $\delta'>0$, for all sufficiently small $\varepsilon>0$ and sufficiently large $K$, we have
    \begin{equation}
\E\big[\big(N^{K,\varepsilon,f}_t\big)^2\big]\leq \E^2\big[N^{K,\varepsilon,f}_t\big] \times K^{8\delta'}.\label{but:mallein}
     \end{equation}
\end{lemma}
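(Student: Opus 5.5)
We bound the second moment by the many-to-two (forks) decomposition and compare each piece to $\E^2[N^{K,\varepsilon,f}_t]$ through the large deviation estimates of Theorem~\ref{thm27bis}.

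\textbf{Decomposition.} Write $(N^{K,\varepsilon,f}_t)^2=N^{K,\varepsilon,f}_t+\sum_{u\neq w}\mathbbm{1}\{\cdots\}$. Pairs of distinct individuals either descend from two different initial individuals or have a most recent common ancestor $v$ that branches at some time $s\log K$. Since $Z^K_0$ is Poisson, the first family of pairs contributes exactly $\E^2[N^{K,\varepsilon,f}_t]$, by independence of the subtrees and the Campbell formula. For the second family we condition at the branching time $S_v$ of the common ancestor $v$, whose path up to $S_v$ lies in the tube restricted to $[0,s]$. The branching is at rate $b$ (clonal) or $p$ (mutant, with a jump of size $O(1/\log K)$). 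The two children then start independent subtrees from traits $y,y'$ close to $f(s)$, and their lineages must stay in the tube on $[s,t]$. Applying Proposition~\ref{prop:forks}, the branching property and Proposition~\ref{prop:FeynmanKac-bis}(ii) to each subtree gives
\[
\E\big[(N^{K,\varepsilon,f}_t)^2\big]\le \E^2[N^{K,\varepsilon,f}_t]+\E[N^{K,\varepsilon,f}_t]+C\log K\int_0^t \E\Big[\sum_{v}\mathbbm{1}_{\text{tube on }[0,s]}\, m^K_{s,t}(X^{K,v}_{s\log K})^2\Big]ds,
\]
where $m^K_{s,t}(y)$ is the expected number of descendants at time $t\log K$ of an individual at $y$ at time $s\log K$ whose lineage stays in the tube.

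\textbf{Exponents.} By the Feynman–Kac formula, the Laplace principle uniform on compacts (Theorem~\ref{thm:main-PGD-bis}), and the arguments of Theorem~\ref{thm27bis} (Steps 1–2 of its lower bound, together with the upper bound with $\beta_0$ replaced by an indicator near $f(s)$), we get, uniformly for $y$ within $\varepsilon$ of $f(s)$,
\[
m^K_{s,t}(y)\le K^{\int_s^t R(f)-(I_t(f)-I_s(f))+\delta'},
\]
and the first-generation expectation is at most $K^{F_s(f)+\delta'}$. Here we use that inside a small tube, $F$-values are within $\delta'$ of those of $f$ (the upper semicontinuity argument \eqref{eq:uniform-upp-cont}), and that the infimum of the rate over the tube is close to $I$ of $f$ itself, since $f$ is a near-optimizer among paths in its own tube (Step 2 of the lower bound proof). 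Similarly, $\E[N^{K,\varepsilon,f}_t]\ge K^{F_t(f)-\delta'}$. The fork term is then bounded by
\[
C\log K\sup_s K^{F_s(f)+2(F_t(f)-F_s(f))+3\delta'}=C\log K\,K^{2F_t(f)-\inf_s F_s(f)+3\delta'}.
\]
Because $\inf_{s\le t}F_s(f)>0$, for $\varepsilon$ small this is at most $K^{2F_t(f)+3\delta'}\le \E^2[N^{K,\varepsilon,f}_t]K^{5\delta'+o(1)}$. Likewise $\E[N^{K,\varepsilon,f}_t]\le \E^2[N^{K,\varepsilon,f}_t]K^{-F_t(f)+2\delta'}$, which is small since $F_t(f)>0$. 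Summing the three pieces gives \eqref{but:mallein} with the factor $K^{8\delta'}$ absorbing constants and $\log K$.

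\textbf{Main obstacle.} The hard step is the uniform splitting of the path functional at the intermediate time $s$. We need upper bounds on $m^K_{s,t}(y)$ that are uniform in $s\in[0,t]$ and in $y$, with the tube for a Skorohod ball cut at time $s$ (time-change issues near $s$). We also need to control mutation jumps at the fork. The plan is to discretize $s$ on a finite grid and use the Laplace principle uniform on compacts with time-shifted versions of $\phi_{k,f}$. The continuity of $s\mapsto F_s(f)$ and $s\mapsto I_s(f)$ (absolute continuity of $f$) then lets us interpolate between grid points at the cost of $\delta'$.
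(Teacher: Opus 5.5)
Your proposal is correct and follows essentially the paper's proof: the same split into diagonal, common-ancestor and distinct-ancestor pairs, the forks formula of Proposition~\ref{prop:forks} conditioned at the branching time with a finite time grid for uniformity in $s$, and the identity $F_t(f)+\sup_{s}F_{s,t}(f)=2F_t(f)-\inf_s F_s(f)\le 2F_t(f)$. Two minor points: evaluating the distinct-ancestor term exactly as $\E^2[N^{K,\varepsilon,f}_t]$ via the multivariate Mecke formula is slightly cleaner than the paper's bound $K^{2F_t(f)+5\delta'}$. The time-change obstacle you flag but leave unresolved is handled in the paper by the inclusion $B_\varepsilon(f)\subset B^\infty_{\eta(\varepsilon)}(f)$ for continuous $f$ (Appendix~\ref{app:dsko-unif}), since sup-norm tubes split exactly at any intermediate time.
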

\begin{proof}Recall that $V^K_t$ is the collection of labels of individuals alive at time $t$, and that $V^K_{[0,t]}$ consists of the labels of individuals alive between times $0$ and $t$.\\

    First, we have $\E\big[\big(N^{K,\varepsilon,f}_t\big)^2\big] =  A+B+C$ with
    \begin{align*}
        A= &  \E\Big(\sum_{v\in V^K_{t\log K}} \ind^2_{B_\varepsilon(f)}(X^{K,v}_{s\log K},s\leq t)\Big)\\
        B= & \E\Bigg(\sum_{{\scriptsize\begin{array}{c}v_1\not=v_2 ,\ v_1,v_2\in {V}^K_{t \log K}\\
        \exists w \in V^K_{[0,t\log  K ]}, w\prec v_1, w\prec v_2\end{array}}} \ind_{B_\varepsilon(f)}(X^{K,v_1}_{s\log K},s\leq t)\ind_{B_\varepsilon(f)}(X^{K,v_2}_{s\log K},s\leq t)\Bigg)\\
        C= & \E\Bigg(\sum_{{\scriptsize\begin{array}{c}v_1\not=v_2, \ v_1,v_2\in {V}^K_{t \log K}\\
        \not\exists w \in V^K_{[0,t\log  K ]}, w\prec v_1, w\prec v_2 \end{array}}} \ind_{B_\varepsilon(f)}(X^{K,v_1}_{s\log K},s\leq t)\ind_{B_\varepsilon(f)}(X^{K,v_2}_{s\log K},s\leq t)\Bigg).
    \end{align*}In $B$, the two individuals $v_1$ and $v_2$ have a common ancestor $w\in V^K_{[0,t\log  K ]}$ and hence a common ancestor at time $0$, whereas in $C$, the two individuals are descended from different ancestors at time $0$.\\

\noindent \textbf{Term $A$.}  Let $\delta'>0$. For the term $A$, we notice that 
                $A=  \E\big(N_t^{K,\varepsilon,f}\big)$. 
                Then, using Theorem \ref{thm27bis}, we obtain that for $\varepsilon$ small enough and $K$ large enough, 
                \begin{equation}
                   K^{F_t(f)-\delta'}\leq K^{\sup_{g\in B_\varepsilon(f)} F_t(g)-\delta'} \leq A \leq K^{\sup_{g\in \overline{B}_\varepsilon(f)} F_t(g)+\delta'}\leq K^{F_t(f)+2\delta'},\label{etape8}
                \end{equation}
                where $\overline{B}_\varepsilon(f)$ denotes here the closure in $\D$ of $B_\varepsilon(f)$, and where we used \eqref{eq:I-sci} for the right-most inequality.\\
            
\noindent \textbf{Term B.} 
Summing over all pair of leaves $(v_1,v_2)\in (V^K_{t\log K})^2$, $v_1\not=v_2$, that have a common ancestor at time $0$ amounts to summing over all the internal nodes $w$ of the tree, and then over all pairs made of one descendant at $t\log K$ of $w0$ and one of $w1$. Then, denoting by $\widetilde{S}_w \log K$ the time at which the individual $w$ gives birth, 
\begin{align*}
B = & \E \left[\sum_{\stackrel{w \in V^K_{[0,t\log K]}}{\widetilde{S}_w< t}}  \left(\sum_{\stackrel{v_1\in V^K_{t\log K}}{v_1 \succeq w0}} \ind_{B_\varepsilon(f)}\big( X^{K,v_1}_{s\log K}, s\leq t\big)\right)\times\left( \sum_{\stackrel{v_2\in V^K_{t\log K}}{v_2 \succeq w1}} \ind_{B_\varepsilon(f)}\big(X^{K,v_2}_{s \log K}, s\leq t\big)  \right)\right].
 \end{align*}
 
 Conditionally on $\mathcal{F}_{\widetilde{S_w}\log K}$, the two populations descending from $w0$ and $w1$ are independent. The idea is to condition on $\mathcal{F}_{\widetilde{S}_w\log K}$ and consider the time intervals $ [0,\widetilde{S}_w\log K]$ and $[\widetilde{S}_w\log K,t\log  K ]$.
It is then natural to introduce, for any time $s\in [0,t]$ and $f\in AC[0,t]$, the notation 
 \begin{equation}F_{s,t}(f):=F_t(f)-F_s(f)= \int_s^t R(f(u))du - I_{s,t}(f),\label{decomposition_F}\end{equation}
where $I_{s,t}(f)=I_t(f)-I_s(f)=\int_s^t L(f_u,\dot{f}_u) du$ can be interpreted as the rate function of the large deviations for $Y^K$ on the time interval $[s,t]$. 

A difficulty arises from the Skorohod distance as,
\begin{align}\dSko(f,g ) \leq \max \Big(\dSko\big(f|_{[0,s]},g|_{[0,s]} \big), \dSko\big(f|_{[s,t]},g|_{[s,t]} \big) \Big),\label{etape20}\end{align}with an inequality that can be strict as the right hand side can be associated only with a time deformation $\lambda$ in \eqref{def:dSko} that keeps $s$ fixed.
This implies that 
\[
B_\varepsilon(f) \supset \left\{g\in\mathbb{D}[0,t]: g|_{[0,s]}\in B_\varepsilon(f|_{[0,s]})\right\} \cap \left\{g\in\mathbb{D}[0,t]: g|_{[s,t]}\in B_\varepsilon(f|_{[s,t]})\right\},
\]
and not the reverse inclusion, which raises difficulties when separating the time interval $[0,t]$ into $[0,s]$ and $[s,t]$. \\
However, we will be only interested in the case where $f \in AC[0,t]$, for which there exists a constant  $\eta(\varepsilon)$ depending on $\varepsilon$, $f$ and $t$ (the dependencies on $f$ and $t$ are omitted in the notation for the sake of readability), such that
\begin{equation}\label{etape21}
    B_\varepsilon(f) \subset B_{\eta(\varepsilon)}^{\infty}(f):= \{g\in \D[0,t],\ \sup_{r\in [0,t]} |f(r)-g(r)|< \eta(\varepsilon)\}.
\end{equation}This inclusion is proved in Appendix \ref{app:dsko-unif}. Now, for the uniform norm, we have an equality in \eqref{etape20} and
\begin{equation}\label{etape22}B_{\eta(\varepsilon)}^\infty(f)= 
\left\{g\in\mathbb{D}[0,t]: g|_{[0,s]}\in B^\infty_{\eta(\varepsilon)}(f|_{[0,s]})\right\} \cap \left\{g\in\mathbb{D}[0,t]: g|_{[s,t]}\in B^\infty_{\eta(\varepsilon)}(f|_{[s,t]})\right\}.
      \end{equation}

With these notations, we have on the set $\{\widetilde{S}_w<t\}$:
 
 \begin{align}
\lefteqn{  \E\Big(\sum_{\stackrel{v\in V^K_{t\log K}}{v \succeq w0}} \ind_{B_\varepsilon(f)}\big( X^{K,v}_{s\log K}, s\leq t\big)  \, \Big|\, \mathcal{F}_{\widetilde{S}_w\log K}\Big)}\nonumber\\
     \leq & \E\Big(\sum_{\stackrel{v\in V^K_{t\log K}}{v \succeq w0}} \ind_{B^\infty_{\eta(\varepsilon)}(f)}\big( X^{K,v}_{s\log K}, s\leq t\big)  \, \Big|\, \mathcal{F}_{\widetilde{S}_w\log K}\Big) \nonumber\\
     = & \ind_{B^\infty_{\eta(\varepsilon)}(f |_{[0,\widetilde{S}_w]})}(X^{K,w}_{r\log K}, r\leq \widetilde{S}_w) \nonumber\\
     & \hspace{2cm} \times \E_{\delta_{X^{K,w0}_{\widetilde{S}_w\log K}}}\Big(\sum_{v\in V^K_{(t-s)\log K}} \ind_{B^\infty_{\eta(\varepsilon)}(f(s+\cdot))}\big( X^{K,v}_{r\log K}, r\leq t-s\big)\Big)|_{s={\widetilde{S}_w}}\nonumber\\
     = & 
     \ind_{B^\infty_{\eta(\varepsilon)}(f |_{[0,\widetilde{S}_w]})}(X^{K,w}_{r\log K}, r\leq \widetilde{S}_w) \nonumber\\
     & \hspace{2cm} \times \E_{X^{K,w0}_{\widetilde{S}_w\log K}}\Big(K^{\int_{0}^{t-s} R(X^{K}_{r\log K})dr} \ind_{B^\infty_{\eta(\varepsilon)}(f(s+\cdot))}\big(X^{K}_{r\log K}, r\leq t-s)  \Big)|_{s={\widetilde{S}_w}} \nonumber\\
         \leq &          \ind_{B^\infty_{\eta(\varepsilon)}(f |_{[0,\widetilde{S}_w ]})}(X^{K,w}_{r\log K}, r\leq \widetilde{S}_w) \, K^{\int_{\widetilde{S}_w}^t R(f(r)) \ dr + C(f,R)t\eta(\varepsilon)} \nonumber\\
         & \hspace{2cm} \times \P_{X^{K,w0}_{\widetilde{S}_w\log K}} \Big(\sup_{r\leq t-s} |X^K_{r\log K}-f(s+r)|<\eta(\epsilon)\Big)|_{s=\widetilde{S}_w}, \label{etape25}
     \end{align}
  where we have used Proposition \ref{prop:FeynmanKac-bis}(ii) to obtain the fourth line, and where the constant $C(f,R)$ is the Lipschitz norm of $R$ on $[\inf f-1,\sup f+1]$.

We would like to use Theorem \ref{thm:main-PGD-bis} to upper bound the probability appearing in the right hand side of \eqref{etape25}. But because $\widetilde{S}_w$ is a random time, we have to discretize the time interval $[0,t]$. It will be useful to consider $h>0$ a small time mesh such that $t/h\in\mathbb{N}$. For any $x\in\R$ and $s<s'<t$,
\begin{align*}
&\P_{x} \Big(\sup_{r\leq t-s} |X^K_{r\log K}-f(s+r)|<\eta(\epsilon)\Big)\\
  & \leq  \E_x\left(\ind_{X^K_{(s'-s)\log K}\in[f(s')-\eta(\varepsilon),f(s')+\eta(\varepsilon)]}\P_{X^{K}_{(s'-s)\log K}} \Big(\sup_{r\leq t-s'} |X^K_{r\log K}-f(s'+r)|<\eta(\epsilon)\Big)\right) \\
  &\leq  \sup_{y\in[f(s')-\eta(\varepsilon),f(s')+\eta(\varepsilon)]}\P_{y} \Big(\sup_{r\leq t-s'} |X^K_{r\log K}-f(s'+r)|<\eta(\epsilon)\Big) \\ 
  & \leq\sup_{y\in[f(s')-\eta(\varepsilon),f(s')+\eta(\varepsilon)]}\mu^{K}_{y,t-s'}(B_{\eta(\varepsilon)}(f|_{[s',t]}))
     \end{align*}where we notice that we have returned to a ball with respect to the Skorohod distance in the last line, permitting to use our large deviation result.
Hence, for $k\in\{0,\ldots,t/h-1\}$, on the event $\{\widetilde{S}_w\in[kh,(k+1)h]\}$,
\begin{multline*}
 \E\Big(\sum_{\stackrel{v\in V^K_{t\log K}}{v \succeq w0}} \ind_{B_\varepsilon(f)}\big( X^{K,v}_{s\log K}, s\leq t\big)  \, \Big|\, \mathcal{F}_{\widetilde{S}_w\log K}\Big)\\
         \leq           \ind_{B^\infty_{\eta(\varepsilon)}(f |_{[0,\widetilde{S}_w ]})}(X^{K,w}_{r\log K}, r\leq \widetilde{S}_w) \, K^{\int_{\widetilde{S}_w}^t R(f(r)) \ dr + C(f,R)t\eta(\varepsilon)} \\
          \times \sup_{y\in[f((k+1)h)-\eta(\varepsilon),f((k+1)h)+\eta(\varepsilon)]}\mu^{K}_{y,t-(k+1)h}(B_{\eta(\varepsilon)}(f|_{[(k+1)h,t]})).
  \end{multline*}
Using the Laplace principle uniformly on compact sets of Theorem~\ref{thm:main-PGD-bis} as in~\eqref{eq:def-phi_k} and~\eqref{eq:LDP-unif}, we deduce that, for $K$ large enough, for all $y\in [f((k+1)h)-\eta(\varepsilon),f((k+1)h)+\eta(\varepsilon)]$,
\begin{align*}
 \frac{\log\mu^{K}_{y,t-(k+1)h}(B_{\eta(\varepsilon)}(f|_{[(k+1)h,t]}))}{\log K} 
& \leq -\inf_{\stackrel{g\in \overline{B}_{\eta(\varepsilon)}(f|_{[(k+1)h,t]})}{g(0)\in[f((k+1)h)-\eta(\varepsilon),f((k+1)h)+\eta(\varepsilon)]}} I_{t-(k+1)h}(g) \\
& \leq - I_{t-(k+1)h}(f|_{[(k+1)h,t]})+\delta',
\end{align*}
by the lower semi-continuity of $I_{t-(k+1)h}$, for $\varepsilon$ small enough. Hence, on the event $\{\widetilde{S}_w\in[kh,(k+1)h]\}$,
\begin{align}
 \lefteqn{\E\Big(\sum_{\stackrel{v\in V^K_{t\log K}}{v \succeq w0}} \ind_{B_\varepsilon(f)}\big( X^{K,v}_{s\log K}, s\leq t\big)  \, \Big|\, \mathcal{F}_{\widetilde{S}_w\log K}\Big) }\nonumber\\
         \leq &          \ind_{B^\infty_{\eta(\varepsilon)}(f |_{[0,\widetilde{S}_w ]})}(X^{K,w}_{r\log K}, r\leq \widetilde{S}_w) \, K^{\int_{\widetilde{S}_w}^t R(f(r)) \ dr + C(f,R)t\eta(\varepsilon)-I_{t-(k+1)h}(f|_{[(k+1)h,t]})+\delta'} \nonumber \\ \leq & \ind_{B^\infty_{\eta(\varepsilon)}(f |_{[0,\widetilde{S}_w ]})}(X^{K,w}_{r\log K}, r\leq \widetilde{S}_w) \, K^{F_{\widetilde{S}_w,t}(f)+C(f,R)t\eta(\varepsilon)+\delta'+I_{kh,(k+1)h}(f)} \nonumber \\ 
         \leq & \ind_{B^\infty_{\eta(\varepsilon)}(f |_{[0,\widetilde{S}_w ]})}(X^{K,w}_{r\log K}, r\leq \widetilde{S}_w) \, K^{F_{\widetilde{S}_w,t}(f)+2\delta'},\label{etape26}
     \end{align}
  reducing $\varepsilon>0$ if needed and choosing $h$ such that $I_{kh,(k+1)h}(f)\leq \delta'/2$ for all $k$.
This property holds true on the event $\{\widetilde{S}_w\in[kh,(k+1)h]\}$ for all $K$ large enough and all $\varepsilon>0$ small enough. Since there are only finitely many $k\in\{0,\ldots,t/h-1\}$, we deduce that \eqref{etape26} holds true almost surely for all $K$ large enough and all $\varepsilon>0$ small enough.

  A similar expression holds for the population stemming from $w1$. Thus, using Proposition~\ref{prop:forks}, 
  \begin{align}
B\leq  & \int_\R K^{\beta_0(x)} \mathbb{E}_{\delta_x} \left( \sum_{\stackrel{w \in V^K_{[0,t\log K]}}{\widetilde{S}_w< t}} \ind_{\sup_{r\leq \widetilde{S}_w} |X^{K,w}_{r\log K} -f(r)|<\varepsilon} K^{2 F_{\widetilde{S}_w,t}(f)+4\delta'}\right) \ dx \nonumber\\
    \leq & \int_{f(0)-\varepsilon}^{f(0)+\varepsilon} K^{\beta_0(x)} \int_0^{t} \mathbb{E}_{x} \Big( \ind_{\sup_{r\leq s} |X^K_{r\log K}-f(r)|<\varepsilon}  \nonumber\\
    & \qquad \qquad \qquad \qquad  \times (b+p+d)(X^K_{s\log K}) \ K^{2F_{s,t}(f)+\int_0^s R(X^K_{r\log K})dr + 4\delta' }\Big)\, ds \, dx \nonumber\\
\leq & (\bar{b}+\bar{p}+\bar{d}) 2\varepsilon K^{\beta_0(f(0))+C\varepsilon} \int_0^t K^{-I_s(f)+2 F_{s,t}(f)+\int_0^s R(f(r)) dr + 5\delta'} ds\nonumber \\
\leq &  K^{F_t(f) + \sup_{s\in [0,t]} F_{s,t}(f)+6\delta'},
\end{align}
for $\varepsilon$ sufficiently small and $K$ sufficiently large, and where we used \eqref{def:Ft-fonctionnelle} and 
\eqref{decomposition_F} in the fourth line. 
Note that:
\begin{align*}
    F_t(f)+\sup_{s\in [0,t]}F_{s,t}(f)= & F_t(f)+\sup_{s\in [0,t]} \big( F_t(f)-F_s(f)\big) = 2F_t(f)- \inf_{s\in [0,t]}F_s(f).
\end{align*}
Thus,
 \begin{equation}
B\leq   K^{2F_t(f)-\inf_{s\in [0,t]} F_{s}(f) + 6\delta'}\leq K^{2F_t(f) + 6\delta'},\label{etapeB}
\end{equation}
as $\inf_{s\in [0,t]} F_s(f)\geq 0$ by the assumptions of Lemma \ref{lem-appli-fourches}.\\

\color{black}

\noindent \textbf{Term $C$.} By the branching property,
\begin{align*}
    C = & \E\Bigg(\sum_{\stackrel{w_1\not= w_2}{w_1,w_2\in V_0^K}} \Bigg(\sum_{\stackrel{v_1\in {V}^K_{t \log K}}{v_1\succeq w_1}} \sum_{\stackrel{v_2\in {V}^K_{t \log K}}{v_2\succeq w_2}} \ind_{B_\varepsilon(f)}(X^{K,v_1}_{s\log K},s\leq t)\ind_{B_\varepsilon(f)}(X^{K,v_2}_{s\log K},s\leq t)\Bigg)\Bigg)\\
    = & \E\Bigg(\sum_{\stackrel{w_1\not= w_2}{w_1,w_2\in V_0^K}} \E_{\delta_{X^{K,w_1}_0}}\Big(\sum_{v_1\in {V}^K_{t \log K}} \ind_{B_\varepsilon(f)}(X^{K,v_1}_{s\log K},s\leq t)\Big)\\
    &  \hspace{5cm} \times \E_{\delta_{X^{K,w_2}_0}}\Big(\sum_{v_2\in {V}^K_{t \log K}} \ind_{B_\varepsilon(f)}(X^{K,v_2}_{s\log K},s\leq t)\Big)\Bigg) \\
    = & \E\Bigg(\sum_{\stackrel{w_1\not= w_2}{w_1,w_2\in V_0^K}}\E_{\delta_{X^{K,w_1}_0}}\Big(N^{K,\varepsilon,f}_t\Big)\E_{\delta_{X^{K,w_2}_0}}\Big(N^{K,\varepsilon,f}_t\Big)\Bigg)
    \end{align*}
    Using \eqref{etape8} for the internal expectations and accounting that we start from a single particle and not from a point process of intensity $K^{\beta_0(x)}dx$,
    \begin{align}
    C \leq & \E\Bigg(\iint_{(f(0)-\varepsilon,f(0)+\varepsilon)^2} \mathbbm{1}_{x\neq y}K^{2F_t(f) - 2\beta_0(f(0)) +4\delta'}Z_0^K(dx)Z_0^K(dy)\Bigg) \notag \\ 
    = & K^{2F_t(f) - 2\beta_0(f(0)) +4\delta'}\iint_{(f(0)-\varepsilon,f(0)+\varepsilon)^2} K^{\beta_0(x)} K^{\beta_0(y)} dx\ dy 
    \leq   K^{2F_t(f)+5\delta'}, \label{etapeC}
\end{align} 
where we used the multivariate Mecke's formula (e.g.~\cite[Theorem 4.4]{lastpenrose}) in the second line.

Gathering the upper bounds in \eqref{etape8}, \eqref{etapeB} and \eqref{etapeC} gives that:
\begin{equation}
    \E\big[\big(N^{K,\varepsilon,f}_t\big)^2\big] \leq K^{2F_t(f) + 6\delta'} \leq \E^2\big[N^{K,\varepsilon,f}_t\big] \times K^{8\delta'}, \label{etape13}
\end{equation}where the last inequality comes from the lower bound in \eqref{etape8}. This ends the proof of Lemma \ref{lem-appli-fourches}.
\end{proof}

Notice that if the initial condition has not the intensity $K^{\beta_0(x)}\ dx$ but $K^{\beta_0(x)-\delta''}\ dx$ for example, the proof above would provide instead of \eqref{etape13} that: 
\begin{equation}
\E\big[\big(N^{K,\varepsilon,f}_t\big)^2\big] \leq K^{2F_t(f) +6\delta' -2\delta''} \leq \E^2\big[N^{K,\varepsilon,f}_t\big] \times K^{8\delta' }.        \label{etape12}
\end{equation}
For the last inequality, notice that $\beta_0(x)$ appears in the definition of $F_t(f)$ so that for this new initial condition $\beta_0(x)-\delta''$, the lower bound in \eqref{etape8} becomes:
\begin{equation}
\label{etape78}
    K^{(\beta_0(x)-\delta'')+\int_0^t R(f(s))ds-I_t(f) -\delta' }=K^{F_t(f)-\delta''-\delta'} \leq \E(N_t^{K,\varepsilon,f}).
\end{equation}
These inequalities will be useful in the following.

\subsection{Proof of Lemma \ref{lemma:tube}}\label{sec:preuve_lemme_5.1}

We are now ready to prove Lemma \ref{lemma:tube}. 
Let us introduce
 \begin{equation}\label{def:delta0}
    \delta_0 = \min_{s\in[0,t]} \left\{F_s(f)\right\}>0.
    \end{equation}
Let us fix $\delta\in (0,\delta_0)$. Our purpose is to prove that almost surely, for all sufficiently small $\varepsilon > 0$,
  \begin{equation}\label{but:minoration}
         \liminf_{K\to\infty}\frac{1}{\log K}\log N^{K,\varepsilon,f}_{t} > F_t(f)-\delta.
    \end{equation}
A direct use of Lemma~\ref{lem-appli-fourches} would only show that $\frac{1}{\log K}\log N^{K,\varepsilon,f}_{t} > F_t(f)-\delta$ holds with a probability converging to 0 as $K\to+\infty$ (see~\eqref{etape10} below). To obtain the almost sure lower bound we make use of the branching property by dividing the initial population into several groups. For this, let us consider $\delta'' \in (0,\delta_0)$ so that $\beta_0(f(0))-\delta''>0$. This $\delta''$ as well as the $\delta'$ appearing in \eqref{etape8} will be fixed depending on $\delta$ in the end of the proof, and $\varepsilon$ (resp. $K$) are chosen small (resp. large) enough according to these choices.

By the form of the initial condition \eqref{conditioninitiale}, and by the superposition principle, we can write  
     \begin{equation*}Z^K_0(dx)=\sum_{i=1}^{\lfloor K^{\delta''}\rfloor} \widetilde{Z}^{K,i}_0(dx)\end{equation*}
     where $\widetilde{Z}^{K,i}_0(dx)$ are i.i.d. Poisson point measures with the modified intensity measure $(K^{\beta^K_0(x)}/\lfloor K^{\delta''}\rfloor) dx$. This decomposition combined with the branching property will provide that \eqref{etape17} holds almost surely by Borel-Cantelli's lemma. For $i\in \{1,\cdots ,\lfloor K^{\delta''}\rfloor\}$, we will denote by $\widetilde{N}^{K,\varepsilon,f,i}_t$ the number of particles, among those started from $\widetilde{Z}^{K,i}_0$, that remain in the tube of width $\varepsilon$ around $f$ until time $t$. Note that the random variables $\widetilde{N}^{K,\varepsilon,f,i}_t$ are i.i.d.\\

 Let us consider the branching process started from $\widetilde{Z}^{K,1}_0$ and let $\delta\in (0,\delta_0)$.  We have by the Cauchy-Schwarz inequality that
     \begin{align}\label{etape15}
     \E\Big[\widetilde{N}^{K,\varepsilon,f,1}_t \ind_{\widetilde{N}^{K,\varepsilon,f,1}_t>K^{F_t(f)-\delta}}\Big]\leq & \sqrt{\E\big[\big(\widetilde{N}^{K,\varepsilon,f,1}_t\big)^2\big] \P\big(\widetilde{N}^{K,\varepsilon,f,1}_t > K^{F_t(f)-\delta}\big)}.
     \end{align}
     Moreover,
     \begin{align}
         \E\Big[\widetilde{N}^{K,\varepsilon,f,1}_t \ind_{\widetilde{N}^{K,\varepsilon,f,1}_t>K^{F_t(f)-\delta}}\Big] = & \E\Big[\widetilde{N}^{K,\varepsilon,f,1}_t \Big]-\E\Big[\widetilde{N}^{K,\varepsilon,f,1}_t \ind_{\widetilde{N}^{K,\varepsilon,f,1}_t\leq K^{F_t(f)-\delta}}\Big]\nonumber\\
         \geq & \E\big[\widetilde{N}^{K,\varepsilon,f,1}_t \big]  - K^{F_t(f)-\delta} \nonumber\\
         \geq  & \E\big[\widetilde{N}^{K,\varepsilon,f,1}_t \big] \big( 1  - K^{\delta'+\delta''-\delta}\big),
     \end{align}
     using~\eqref{etape78}. 
     As a consequence, using \eqref{etape12},
     \begin{align*}
     \P\big(\widetilde{N}^{K,\varepsilon,f,1}_t>K^{F_t(f)-\delta}\big)  \geq & \frac{\E^2\big[\widetilde{N}^{K,\varepsilon,f,1}_t \big] \big( 1  - K^{\delta'+\delta''-\delta}\big)^2}{\E\big[\big(\widetilde{N}^{K,\varepsilon,f,1}_t\big)^2\big]}\nonumber\\
     \geq & K^{-8\delta'} \big( 1  - K^{\delta'+\delta''-\delta}\big)^2.
     \end{align*}If $\delta'$ and $\delta''$ are small enough so that
    $\delta'+\delta''-\delta <0$, we obtain for $K$ large enough, 
\begin{equation}
    \P\big(\widetilde{N}^{K,\varepsilon,f,1}_t>K^{F_t(f)-\delta}\big) \geq \frac{1}{2} K^{-8 \delta'}\geq K^{-9\delta'}\label{etape10}
\end{equation} and the same inequality also holds for any $\widetilde{N}^{K,\varepsilon,f,i}_t$, $i\in \{1,\dots, \lfloor K^{\delta''}\rfloor \}$.\\

      We can now conclude, using the branching property. Let us consider the full branching process started from $Z_0^K(dx)$. We have: \begin{align}
\P\big(N^{K,\varepsilon,f}_t\leq  K^{F_t(f)-\delta}\big) 
         \leq &  \prod_{i=1}^{K^{\delta''}}\P\big(\widetilde{N}^{K,\varepsilon,f,i}_t\leq  K^{F_t(f)-\delta}\big) \nonumber\\
         \leq & \Big(1-K^{-9\delta'}\Big)^{\lfloor K^{\delta''}\rfloor }\  
         \sim  \exp\big(- K^{\delta'' - 9\delta'}\big),\label{etape18}
\end{align}for $K$ sufficiently large. This bound converges to zero provided $\delta''-9\delta'>0$.\\

Choosing $\delta'=\delta/20$ and $\delta''=\delta/2$, we have that $\delta'+\delta''-\delta=-9 \delta/20<0$ and $\delta''-9\delta' = \delta/20>0$, in accordance to what we wanted in \eqref{etape10} and \eqref{etape18}.\\
     
Moreover, from \eqref{etape18},  we obtain by Borel-Cantelli's lemma, that \eqref{but:minoration} holds true almost surely.
Since $\delta$ was arbitrary, we have proved Lemma~\ref{lemma:tube}. \hfill $\Box$

\section{The link between the variational formulation of the limit and the Hamilton-Jacobi equation \eqref{eq:HJ}}\label{section:link-variational-HJ}

In this section, we study the link between the variational formulation \eqref{def:u-a} and the Hamilton-Jacobi equation \eqref{eq:HJ}. In particular, we prove Theorem \ref{prop:var-to-visc}, Lemma \ref{lem:ua-cont}, and Theorem \ref{cor:HJ}. To this end, we first provide some preliminary lemmas. 

  \begin{lemma}
  \label{lem:optimal-traj}
 Let $(t,x)\in \widetilde\Omega_a$. There exists an optimal trajectory $f\in \mathrm{AC}([0,t])$ {in the maximizing problem in \eqref{def:u-a}},  such that $f(t)=x$ and that for all $s\in [0,t]$, $F_s(f)\geq a$. Moreover, $\|{f}\|_{W^{1,\infty}([0,t])}$ is uniformly bounded for all $(t,x)\in [0,T]\times B_{ M}(0)$.   
 \end{lemma}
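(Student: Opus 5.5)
The plan is to use the direct method of the calculus of variations for existence, and a standard regularity argument for Lagrangians of Tonelli type for the $W^{1,\infty}$ bound.

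\textbf{Existence.} Fix $(t,x)\in\widetilde\Omega_a$ and take a maximizing sequence $f_n\in AC[0,t]$ with $f_n(t)=x$, $F_s(f_n)\geq a$ for all $s\in[0,t]$, and $F_t(f_n)\to u_a(t,x)$. Since $u_a(t,x)\ge a$ and $F_t(f_n)\ge a$, we get
\[
I_t(f_n)\le \beta_0(f_n(0))+t\bar R-a\le \bar\beta+t\bar R-a .
\]
This bounds $I_t(f_n)$, and then also $\beta_0(f_n(0))\ge a-t\bar R$. By \eqref{as:beta0}, the latter confines $f_n(0)$ to a compact set.

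The superlinear lower bound \eqref{condL}, $\mu(|v|)-A\le L$, gives equi-integrability of $\dot f_n$ via de la Vallée Poussin. Hence, up to extraction, $f_n\to f$ uniformly and $\dot f_n\rightharpoonup \dot f$ weakly in $L^1$; also $f(t)=x$ and $f\in AC$. Since $L(x,\cdot)$ is convex and $L$ is continuous, the functional $I_s$ is weakly lower semicontinuous for every $s$ (equivalently, one can invoke the lower semicontinuity of $I_s$ for the Skorohod topology cited from \cite{Fathi-book}). Together with the uniform convergence of $\beta_0(f_n(0))$ and of $\int_0^sR(f_n)$, this gives $F_s(f)\ge\limsup_n F_s(f_n)\ge a$ for all $s$, and $F_t(f)\ge u_a(t,x)$. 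So $f$ is optimal.

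\textbf{Lipschitz bound.} This is the main obstacle, because the state constraint $F_s(f)\ge a$ rules out a direct appeal to Euler--Lagrange. I would first get uniform $L^\infty$ bounds on $f$. For $(t,x)\in[0,T]\times B_M(0)$, the quantity $I_t(f)$ is bounded by $C(T,M,a)$ as above, and $f(0)$ lies in a fixed compact set. Then $\int_0^t\mu(|\dot f|)$ is bounded, which bounds $\|f\|_\infty$.

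For $\dot f$ I would use a time-reparametrization (Lipschitz regularity à la Clarke--Vinter) argument. Suppose $|\dot f|>V$ on a set $E$ of positive measure, with $V$ large. Replace $f$ by $f\circ\tau$, where $\tau$ slows $f$ on $E$ by a factor $1+\lambda$ and speeds it up slightly on a set where $|\dot f|\le V_0$ (such a set exists since $\int|\dot f|$ is bounded), so that total time is preserved and endpoints are fixed. To first order in $\lambda$, the change in cost is
\[
\lambda\int_E\bigl(L-\dot f\,\partial_vL\bigr)\,ds ,
\]
and $\dot f\,\partial_vL-L\to+\infty$ as $|\dot f|\to\infty$ by \eqref{condL}. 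This gain dominates the bounded contributions from $R$ and from the region of slow motion. So for $V$ large the reparametrized path strictly increases $F_t$.

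It remains to check that the constraint is preserved. Because the perturbation is small in $\lambda$ and we only increase $F$ along slowed portions up to a bounded error, I would handle possible contact points $F_s(f)=a$ by choosing $\tau$ so that $F_s(f\circ\tau)\ge F_{\sigma(s)}(f)$ up to $O(\lambda)$ terms that are nonnegative for $V$ large. If contact times make this delicate, an alternative is to split at contact times and apply the argument on subintervals, where the constraint is inactive; an unconstrained minimizer of a Tonelli Lagrangian is Lipschitz with a bound depending only on the energy bounds. Either way the constant depends only on $T$, $M$, $a$ and the data, which gives the uniform $W^{1,\infty}$ bound.
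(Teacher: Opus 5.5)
Your existence argument is correct and is the same direct-method argument the paper invokes (via Fathi), with the right inequality $F_s(f)\ge\limsup_n F_s(f_n)$ coming from lower semicontinuity of $I_s$. The gap is in the uniform $W^{1,\infty}$ bound, which the paper imports from Fathi and Calvez--Lam and which your reparametrization sketch does not establish.

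\textbf{The free initial point.} Your perturbation preserves the duration and both endpoints of $f$. It therefore can never push $\|\dot f\|_\infty$ below the mean speed $|x-f(0)|/t$, which your estimates only bound by $C/t$. Concretely, $\{|\dot f|\le V_0\}$ is guaranteed to have positive measure only if $V_0>t^{-1}\int_0^t|\dot f|$, so your threshold $V$ blows up as $t\to0$. Uniformity on $[0,T]\times B_M(0)$ has to come from optimality with respect to the free initial point. This means a transversality condition at $s=0$ bounding $\partial_vL(f,\dot f)$ near $s=0$ by the local Lipschitz constant of $\beta_0$. That hypothesis of \eqref{as:beta0} is never used in your argument, and it is indispensable. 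Take constant rates, $a$ very negative, and the non-Lipschitz datum $\beta_0(y)=-|y|^{1/2}-|y|$: the optimal (straight) trajectories ending at $(t,t^{2/3})$ have speeds tending to $+\infty$ as $t\to0$.

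\textbf{The state constraint.} Neither of your two treatments works as stated.
\begin{itemize}
\item \emph{Reparametrization.} To first order, speeding up on $S$ changes $F$ by $-\lambda'\int_S\big(p(f)H(\partial_vL(f,\dot f))+R(f)\big)\,ds$. This is in general a loss and does not improve as $V$ grows. Suppose $S$ precedes $E$ and a contact time $s_0$ (with $F_{s_0}(f)=a$) lies between them. Then $f\circ\tau$ violates the constraint for every $\lambda>0$, and nothing in your argument lets you always place $S$ after $E$ or in the same free arc.
\item \emph{Splitting at contact times.} This yields fixed-endpoint problems on arbitrarily short arcs. Their Lipschitz constants are at least displacement over length, and they are not controlled by the global bound $I_t(f)\le\bar\beta+T\bar R-a$. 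Besides, the contact set can be any closed set.
\end{itemize}

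\textbf{What is missing.} The constraint itself controls the action on free arcs. If $F_{s_1}(f)=F_{s_2}(f)=a$, then $\int_{s_1}^{s_2}L(f,\dot f)\,ds=\int_{s_1}^{s_2}R(f)\,ds\le\bar R(s_2-s_1)$; the same holds on a final arc, since $F_t(f)\ge a$. Improvements supported inside a free arc are admissible, so $f$ is a local fixed-endpoint minimizer there. Hence the energy $p(f)H(\partial_vL(f,\dot f))+R(f)$ is constant on each free arc (Erdmann). The average bound then caps this constant independently of the arc length. On the contact set, $\frac{d}{ds}F_s(f)=0$ a.e.\ gives $L(f,\dot f)=R(f)\le\bar R$. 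Finally, an initial arc with $\beta_0(f(0))>a$ is handled by the transversality condition above.
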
 
 \begin{proof}
 Let $f_n\in \mathrm{AC}[0,t]$ be such that $F_t(f_n)\to u_a(t,x)$ and $F_s(f_n)\geq a$, for all $s\in [0,t]$. Since $L(x,v)$ is strictly convex and superlinear with respect to $v$, and since $R(x)$ and $\beta_0(x)$ are bounded above, it can be shown (see~\cite[Section 3.2]{Fathi-book}) that $f_n$ converges, as $n\to +\infty$, to an absolutely continuous trajectory $f_0$, with $f_0(t)=x$ and
 \[
F_s(f_n)\to F_s(f_0) , \qquad \text{for all $s\in [0,t]$}.
 \]
 Since $F_s(f_n)\geq a$, for all $s\in [0,t]$, we also have $F_s(f_0)\geq a$ and hence $(s,f_0(s))\in \widetilde\Omega_a$.
 Moreover, one can prove (see \cite{Fathi-book} and \cite[Lemma 6]{VC.KL:19}) that this optimal trajectory is bounded in $W^{1,\infty}([0,t])$ for all $(t,x)$ in a compact set $[0,T]\times B_{{M}}(0)$. Note that here we can use the results of \cite{VC.KL:19} since our assumptions in \eqref{borne-bp}--\eqref{def:R} 
 lead to the assumptions made in the latter article (see \cite[Corollary 4]{VC.KL:19}). \end{proof}

 \begin{lemma}
 \label{lem:ucont}
 The set $\Omega_a$ is an open set.  Furthermore, $u_a$ is bounded above locally in $t$ and globally in $x$, and it is locally Lipschitz continuous in $\overline \Omega_a$, and consequently in $\widetilde \Omega_a$. Moreover, the Lipschitz bound in $t$ and $x$, is locally uniform with respect to $a$.
 \end{lemma}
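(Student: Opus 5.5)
We prove the statements in the order: upper bound, Lipschitz continuity on $\widetilde\Omega_a$, then openness of $\Omega_a$. All three rest on the optimal trajectories of Lemma~\ref{lem:optimal-traj} and on a dynamic programming principle for \eqref{def:u-a}. For $s<t$, any admissible $f$ for $(t,x)$ satisfies
\[
F_t(f)=F_s(f)+\int_s^t \big(R(f_r)-L(f_r,\dot f_r)\big)\,dr,
\]
and its restriction to $[0,s]$ is admissible for $(s,f(s))$. Conversely, an admissible path for $(s,y)$ can be concatenated with any arc on $[s,t]$ as long as the accumulated $F$ stays $\geq a$.

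\textbf{Upper bound.} By \eqref{condL} we have $L\geq -A$, and by \eqref{as:beta0} we have $\beta_0\le\bar\beta$. Hence $F_t(f)\le \bar\beta+t(\bar R+A)$ for every $f$. This bound is uniform in $x$ and locally uniform in $t$.

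\textbf{Local Lipschitz continuity in $\widetilde\Omega_a$.} Fix $(t,x),(t',x')$ in a compact subset of $\widetilde\Omega_a$. Let $f$ be optimal for $(t,x)$; by Lemma~\ref{lem:optimal-traj} its $W^{1,\infty}$ norm is bounded by a constant $C_0$. We distinguish the spatial and temporal perturbations.

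\emph{Space.} We build a competitor for $(t,x')$ from $f$. The naive choice $g=f+(x'-x)$ perturbs $\beta_0(g(0))$ by $O(|x-x'|)$ in the wrong direction. Instead we take $g$ equal to $f$ on $[0,t-\tau]$ and, on $[t-\tau,t]$, equal to $f$ plus a linear correction reaching $x'-x$ at time $t$, with $\tau$ a fixed small time. Then $F_s(g)=F_s(f)$ for $s\le t-\tau$, and $\dot g-\dot f$ is of size $|x-x'|/\tau$. Since $L$ is locally Lipschitz on bounded sets of velocities, $|F_s(g)-F_s(f)|\le C|x-x'|$ for $s\in[t-\tau,t]$.

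\emph{Time.} For $t'>t$, we extend $f$ by a constant on $[t,t']$. This changes $F$ by at most $(\bar R+\sup_{y\text{ bounded}}L(y,0))(t'-t)$. For $t'<t$, we reparametrise $f$ linearly on $[0,t]$ to $[0,t']$ (or truncate it), with a similar $O(|t-t'|)$ error.

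These constructions give $u_a(t',x')\ge u_a(t,x)-C(|t-t'|+|x-x'|)$, provided the competitor $g$ remains admissible, i.e. $F_s(g)\ge a$ for all $s$. Exchanging the two points yields the Lipschitz estimate. The constants depend only on $C_0$ and on local Lipschitz bounds of $R,p,\beta_0$; since the bound of Lemma~\ref{lem:optimal-traj} is uniform in $a$ on compacts, the Lipschitz constants are locally uniform in $a$. Continuity up to $\overline\Omega_a$ then follows by extending the uniformly Lipschitz function $u_a|_{\Omega_a}$.

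\textbf{Openness of $\Omega_a$.} Let $(t,x)\in\Omega_a$ and let $f$ be optimal, so $F_t(f)=u_a(t,x)>a$. Apply the same perturbation to a nearby point $(t',x')$. On $[0,t-\tau]$ nothing changes, and $F_s(g)=F_s(f)\ge a$ there. On $[t-\tau,t']$, continuity of $s\mapsto F_s(f)$ with $F_t(f)>a$ gives $F_s(f)>a+\eta$ for some $\eta>0$, once $\tau$ is chosen small; the perturbation then costs at most $\eta/2$ if $(t',x')$ is close enough. Therefore $g$ is admissible and $u_a(t',x')\ge F_{t'}(g)>a$, so a neighbourhood of $(t,x)$ lies in $\Omega_a$.

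\textbf{Main obstacle.} The hardest step is admissibility in the Lipschitz estimate at points where the optimal $f$ touches the constraint $F_s(f)=a$ near the terminal time, which happens for points of $\widetilde\Omega_a\setminus\Omega_a$ or close to $\partial\Omega_a$. There the end-perturbation may push $F_s$ below $a$. My first attempt is the following. If the constraint is active at times $s$ close to $t$, then $u_a(t,x)$ itself is close to $a$ (by the time-Lipschitz control of $F_s(f)$ along $f$), so the bound $u_a\ge a$ at admissible points gives the comparison directly. Otherwise, the perturbation can be confined to a window $[t-\tau,t]$ where $F_s(f)\ge a+c\tau$. If this fails, I will instead use the comparison with the unconstrained value function as in \cite{VC.KL:19}.
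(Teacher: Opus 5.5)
Your upper bound and your openness argument are fine, provided you truncate rather than reparametrise linearly for $t'<t$. The Lipschitz estimate, however, is the heart of the lemma, and you only prove it conditionally on the competitor being admissible; your construction does not supply that admissibility.

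With a \emph{fixed} window $[t-\tau,t]$, nothing prevents the optimal $f$ from touching the constraint, $F_{s^*}(f)=a$, at some $s^*\in(t-\tau,t)$ while $u_a(t,x)-a$ is of order one. A correction of size $|x-x'|$ can then push $F_{s^*}(g)$ below $a$. Your dichotomy does not cover this case:
\begin{itemize}
\item If ``active close to $t$'' means within $\tau$, the first branch only yields $u_a(t,x)-a\le C\tau$, which is not $O(|t-t'|+|x-x'|)$.
\item The second branch, a window on which $F_s(f)\ge a+c\tau$, does not follow from the constraint being inactive, since $F_s(f)$ may come arbitrarily close to $a$ without touching it.
\end{itemize}
Linear reparametrisation for $t'<t$ is worse: it perturbs $F_s$ on all of $[0,t']$, so it can break the constraint at any interior contact time. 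The fallback to the unconstrained value function is not an argument.

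The missing idea is to let the window shrink with $d:=|t-t'|+|x-x'|$, and to split on the \emph{value} $u_a(t,x)-a$ rather than on contact times. Set $g=f$ on $[0,t'-d]$, and let $g$ be linear from $f(t'-d)$ to $x'$ on $[t'-d,t']$; this is the paper's $\overline f$ of \eqref{eq:gammaO}. Since $t'-d\le t$ and $|x'-f(t'-d)|\le d+C_0(t-t'+d)\le(1+2C_0)d$, you get $|\dot g|\le 1+2C_0$. Let $C$ bound $|R-L|$ on the relevant bounded set of positions and velocities. Then $F_s(g)\ge F_{t'-d}(f)-Cd\ge F_t(f)-3Cd$ for $s\in[t'-d,t']$, while $F_s(g)=F_s(f)\ge a$ before that window. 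Two cases remain:
\begin{itemize}
\item If $u_a(t,x)-3Cd\ge a$, then $g$ is admissible and $u_a(t',x')\ge u_a(t,x)-3Cd$.
\item Otherwise $u_a(t,x)<a+3Cd\le u_a(t',x')+3Cd$, because $(t',x')\in\widetilde\Omega_a$.
\end{itemize}
For $t'<d$, use the constant path $x'$ and the local Lipschitz continuity of $\beta_0$, with the same dichotomy.

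This single construction handles time and space at once and gives the bound directly on compact subsets of $\widetilde\Omega_a$, so no extension step is needed. Note that extending $u_a|_{\Omega_a}$ by uniform continuity would not by itself show that the extension equals $a=u_a$ on $\partial\Omega_a$.

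The paper argues differently. It proves the estimate only on small balls $B_r\subset\Omega_a$ where $\min_{\overline B_r}u_a>a$; there admissibility is automatic for $r$ small, with a constant independent of $r$. It then proves continuity at $\partial\Omega_a$ separately, via compactness of optimal trajectories, and patches the two using $u_a=a$ on the boundary.
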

 \begin{proof}
 i) {\bf{$u_a$ is lower semicontinuous in $\Omega_a$ and $\Omega_a$ is an open set.}} {    Let $(t_1,x_1)\in \Omega_a$. We prove that  for all $\eta>0$,  there exists $r>0$, small enough, such that $B_r(t_1,x_1)\in \Omega_a$ and that, for all $(t_2,x_2)\in B_r(t_1,x_1)$, we have 
 \[
 u_a(t_2,x_2)\geq u_a(t_1,x_1)-\eta.
 \]}
 Let $(t_2,x_2)\in B_r(t_1,x_1)$. We define $\delta=|t_1-t_2|+|x_1-x_2|$ and note that $t_2-\delta<t_1$. We also recall that, due to Lemma \ref{lem:optimal-traj}, there exists an optimal trajectory $f_1(\cdot)$ such that $f_1(t_1)=x_1$, $u(t_1,x_1)=F_{t_1}(f_1)$ and $F_s(f)\geq a$, for all $s\in [0,t_1]$. We define
  \begin{equation}
  \label{eq:gammaO}
 \overline f (s)=
 \begin{cases}
 f_1(s) & \text{for all $s\in [0,t_2-\delta]$,} \\
 f_1(t_2-\delta)+\frac{s-t_2+\delta} {\delta}( x_2-f_1(t_2-\delta) )&  \text{for all $s\in [ t_2-\delta,t_2]$.}
 \end{cases}
 \end{equation}
  We will prove that, for $r$ small enough, $F_\tau( \overline f )\geq a$ for all $\tau \in [0,t_2]$ and that $F_{t_2}( \overline f ) \geq{u_a(t_1,x_1)-\eta}$. 
Note that, for all $\tau\in [0,t_2-\delta]$, $F_\tau( \overline f )=F_\tau(f_1)\geq a$. We next consider the case $\tau\in [t_2-\delta,t_1]$ and write
\begin{equation}
\label{Fs-gamma2}
F_\tau( \overline f ) =F_{t_1}(f_1)- \int_{t_2-\delta}^{t_1}  \big[  R(f_1(s)) -L(f_1(s),\dot{f}_1(s))\big]ds+ \int_{t_2-\delta}^{\tau}  \big[  R( \overline f (s)) -L( \overline f (s), \dot{\overline f}(s))\big]ds.
\end{equation} 
 Note that for all $\tau\in [t_2-\delta,t_2]$, $ \overline f (\tau)\in [x_2\wedge f_1(t_2-\delta), x_2\vee f_1(t_2-\delta)]$. Hence,  $ \overline f (\tau)$ is uniformly bounded in  $ [t_2-\delta,t_2]$, for fixed $(t_1,x_1)$ and $r$. We next note that
\[
|\dot{\overline f} (\tau)|=\f{|x_2-f_1(t_2-\delta)|}{ \delta}\leq\f{|x_1-x_2|}{\delta}+\frac{2|f_1(t_1)-f_1(t_2-\delta)|}{ t_1-t_2+\delta },\qquad \text{for all $\tau\in [t_2-\delta,t_2]$ }.
\]

Since $|\dot{f}_1|$ is uniformly bounded in $[0,t_1]$ thanks to Lemma \ref{lem:optimal-traj} and since $|x_1-x_2|\leq \delta$, we deduce that there exists a constant $C$ such that
\[
|\dot{\overline f} (\tau)|\leq C.
\]
 Therefore, the integrand terms in the r.h.s. of \eqref{Fs-gamma2} are bounded.

 Since $R$ and $L$ are locally bounded,  up to choosing $r$ to be a smaller constant, we obtain that, for all $\tau\in [t_2-\delta,t_2]$ and, 
 \[
 F_{{\tau}}( \overline f )\geq u_a(t_1,x_1)-C'\delta \geq u_a(t_1,x_1)-C'r > a.
 \]
 We   deduce that $(t_1,x_1)\in \Omega_a$ and hence $\Omega_a$ is an open set. Furthermore, we have,   for $r$ small enough
 \[
u_a(t_2,x_2)\geq F_{t_2}(\overline f)\geq u_a(t_1,x_1)- C'r\geq u_a(t_1,x_1)-\eta.
 \]

    (ii) {\bf $u_a$ is continuous on $\p \Omega_a$.}   We recall from \fer{upos} that $u\geq a$ in $\widetilde \Omega_a$. From the definition of $\Omega_a$, it is then immediate that $u_a=a$ on $\p \Omega_a\subset\widetilde\Omega_a\setminus{\Omega_a}$.    We  prove that $u_a$ is continuous on $\p \Omega_a$. Let $(\bar t,\bar x)\in \p \Omega_a$ and $(t_n,x_n)\in \Omega_a$ such that, as $n\to +\infty$, $(t_n,x_n)\to (\bar t,\bar x)$. Then, there exist optimal trajectories $f_n:[0,t_n]\to \R$, such that $f_n(t_n)=x_n$, $F_{s}(f_n)\geq a$, for all $s\in [0,t_n]$, and $u_a(t_n,x_n)=F_{t_n}(f_n)$. Similarly to the proof of Lemma \ref{lem:optimal-traj}, we deduce that $f_n$ converges along subsequences, as $n\to +\infty$, to an absolutely continuous trajectory $\overline f$ such that $\overline f(\bar t)=\bar x$, and $F_{s}(f_n)\to F_s(\overline f)$, for all $s\in [0,\bar t]$. Consequently, $F_s(\overline f)\geq a$ for all $s\in [0,\bar t]$ and $u(\bar t, \bar x) \geq F_{\bar t}(\bar f)=\lim_{n\to+\infty} u_a(t_n,x_n)\geq a $. Since $(\bar t, \bar x)\notin \Omega_a$, we deduce that $u_a(\bar t, \bar x)=a$, and hence $\lim_{n\to+\infty} u_a(t_n,x_n)=u_a(\bar t, \bar x)=a $.

 (iii) {\bf $u_a$ is locally Lipschitz continuous in $\overline\Omega_a$. } We first prove that $u_a$ is locally Lipschitz continuous in $\Omega_a$, for all $a>0$, in the following sense.
  Let $B_r\subset \Omega_a$ be a  ball of radius $r$.  We will prove that for $r$ chosen small enough, there exists a constant $C$ such that for all $(t_1,x_1)\in \overline B_r$ and $(t_2,x_2)\in \overline B_r$,
  \[
  |u_a(t_1,x_1)-u_a(t_2,x_2)|\leq C(|t_1-t_2|+|x_1-x_2|).
  \]
The proof follows similar arguments as in the proof of (i). We first choose $\delta =|t_1-t_2|+|x_1-x_2|$ and notice that $t_2-\delta <t_1$. From step (i),  $u_a$ is lower semi-continuous  and hence  $u_m:=\min_{(t,x)\in   \overline B_r} u_a(t,x)>a$. Let $f_1$ be the optimal trajectory such that $f_1(t_1)=x_1$ and $F_s(f_1)\geq a$ for all $s\in [0,t_1]$ and $F_t(f_1)=u_a(t_1,x_1)>a$. We define $\overline f$ as in \fer{eq:gammaO} and notice similarly to above that $\overline f$ and $|\dot{\overline f}|$ are bounded. We also notice that, for $\tau \in [0,t_2-\delta]$, $F_\tau(\overline f)\geq a$. We next use \fer{Fs-gamma2} and the boundedness of $\overline f$, $\dot{\overline f}$, $f_1$ and $\dot{f}_1$ to obtain that, for $\tau \in [t_2-\delta,t_2]$,
  \[
  F_{\tau}(\overline f)\geq F_{t_1}(f_1)-C\delta=u_a(t_1,x_1)-C\delta
  \geq u_m-C\delta.
  \]
  We deduce on the one hand that, for $r$ small enough, $  F_{\tau}(\overline f)\geq a$, for all $\tau \in [t_2-\delta,t_2]$. On the other hand, we have
  \[
  u_a(t_2,x_2)\geq F_{t_2}(\overline f)\geq u_a(t_1,x_1)-C\delta.
  \]
  The opposite inequality can be proved following similar arguments. We conclude that $u_a$ is Lipschitz continuous in ${\overline B_r}$.\\
  We next notice that the Lipschitz bound above only depends on the local bounds on $L$ and $R$. From the continuity of $u_a$ up to the boundary of $\Omega_a$, we deduce {that $u_a$ is indeed  locally Lipschitz  continuous in $\overline \Omega_a$.} 
  Since $u_a(t,x)=a$ in $\widetilde \Omega_a\setminus \Omega_a$, we deduce that $u_a$ is also locally Lipschitz in $\widetilde \Omega_a$. Finally, since the Lipschitz bound above only depends on the local bounds on $L$ and $R$, we deduce that there exist Lipschitz bounds which are locally uniform with respect to $a$.

  (v) {\bf The bound  from above.} From the definition of $u_a$ in \eqref{def:u-a} and the fact that $R$, $-L$ and $\beta_0$ are bounded from above, thanks to assumptions \fer{def:R} and \fer{as:beta0}, we obtain a uniform bound from above on $u_a$, locally in $t$ and globally in $x$.
 \end{proof}

 \begin{lemma}
 \label{lem:dynprog-vois}
 Let $(t,x)\in \Omega_a $  and $0<\tau<t$, with $\tau$ small enough. Then, we have
 \beq
 \label{u:dynPvois}
 u_a(t,x) =\sup_{\underset{f(t)=x}{f\in \mathrm{AC}[t-\tau,t]}} \int_{t-\tau}^t \big[  R(f(s)) -L(f(s),\dot{f}(s))\big]ds +u_a(t-\tau,f(t-\tau)).
 \eeq
 \end{lemma}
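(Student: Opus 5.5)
We prove the dynamic programming identity \eqref{u:dynPvois} by two inequalities. The only real issue is the state constraint $F_s\geq a$. The point of taking $\tau$ small is that, near a point of the open set $\Omega_a$, this constraint is not active.

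\emph{Choice of $\tau$.} Since $(t,x)\in\Omega_a$, we have $u_a(t,x)>a$. By Lemma~\ref{lem:optimal-traj}, every trajectory competing near the optimum stays in a compact set with bounded velocity. More precisely, we use the bound on $\|f\|_{W^{1,\infty}}$ together with the superlinearity \eqref{condL} to discard trajectories with large $I$. We fix $\tau$ so small that the relevant points $(s,f(s))$, $s\in[t-\tau,t]$, remain in a neighbourhood of $(t,x)$. We shrink this neighbourhood so that, by Lemma~\ref{lem:ucont}, it lies in $\Omega_a$ and $u_a>a+\kappa$ there for some $\kappa>0$.

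\emph{Inequality ``$\leq$''.} Let $f\in AC[0,t]$ be admissible, i.e. $f(t)=x$ and $F_s(f)\ge a$ for all $s\le t$. Its restriction to $[0,t-\tau]$ is admissible for $u_a(t-\tau,f(t-\tau))$, so $F_{t-\tau}(f)\le u_a(t-\tau,f(t-\tau))$. Adding $\int_{t-\tau}^t[R-L]$ gives $F_t(f)\le$ the right-hand side of \eqref{u:dynPvois}. Taking the supremum over admissible $f$ yields ``$\leq$''. This direction needs no smallness of $\tau$.

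\emph{Inequality ``$\geq$''.} Fix $g\in AC[t-\tau,t]$ with $g(t)=x$ and $(t-\tau,g(t-\tau))\in\widetilde\Omega_a$; otherwise the term $u_a(t-\tau,\cdot)=-\infty$ contributes nothing. Take an optimal $f_1$ for $u_a(t-\tau,g(t-\tau))$ from Lemma~\ref{lem:optimal-traj}, and concatenate $f_1$ on $[0,t-\tau]$ with $g$ on $[t-\tau,t]$. Then $F_t$ of the concatenation equals the quantity inside the supremum. It remains to check that the concatenation is admissible on $[t-\tau,t]$, i.e. $F_s\ge a$ there.

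We may restrict to $g$ whose value is at least $u_a(t,x)-\eta$, since other $g$ do not affect the supremum once ``$\le$'' is known. Such $g$ have $\int_{t-\tau}^t L(g,\dot g)$ bounded, because $u_a$ and $R$ are bounded above (Lemma~\ref{lem:ucont}). By superlinearity of $L$ we then get $|g(s)-x|\le\omega(\tau)$ with $\omega(\tau)\to0$, and the partial integrals satisfy $\int_{t-\tau}^{s}[R-L]\le C\tau$. Hence, for $s\in[t-\tau,t]$,
\[
F_s=F_t-\int_s^t[R-L]\ge u_a(t,x)-\eta-\int_s^t R+\int_s^t L\ge u_a(t,x)-\eta-\bar R\tau .
\]
This is $>a$ for $\eta,\tau$ small, because $L$ is bounded below by $-A$ (absorbed into a constant times $\tau$). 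So the concatenation is admissible and $u_a(t,x)\ge$ the value for $g$; taking the supremum gives ``$\geq$''.

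The main obstacle is exactly this last admissibility check: keeping $F_s\ge a$ uniformly on the short final interval for near-optimal $g$. The lower bound $L\ge-A$ from \eqref{condL}, combined with $R\le\bar R$ and $u_a(t,x)>a$, handles it as shown. Alternatively, one can use the lower semicontinuity and openness from Lemma~\ref{lem:ucont}(i).
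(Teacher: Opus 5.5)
Your proof is correct and follows essentially the paper's argument: ``$\le$'' comes from restricting an admissible trajectory to $[0,t-\tau]$, and ``$\ge$'' from concatenating with an optimal trajectory for $u_a(t-\tau,g(t-\tau))$ and checking $F_s\ge a$ on $[t-\tau,t]$ via $R\le\bar R$, the lower bound on $L$ (in fact $L\ge 0$ since $H(0)=0$, which is what your display actually uses) and $u_a(t,x)>a$. The paper phrases this second step by contradiction, assuming the value of $g$ strictly exceeds $u_a(t,x)$, rather than restricting to values $\ge u_a(t,x)-\eta$. Your ``Choice of $\tau$'' paragraph and the superlinearity/equicontinuity step are never used, and the claim that Lemma~\ref{lem:optimal-traj} bounds the velocity of all near-optimal trajectories overstates that lemma, so both can simply be dropped.
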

 \begin{proof}
 Let $f_0$ be an optimal trajectory such that $f_0(t)=x$ and $F_t(f_0)=x$, and $F_s(f_0)\geq a$, for all $s\in [0,t]$. We have 
 \[
 u_a(t,x)=\int_{t-\tau}^t \big[  R(f_0(s)) -L(f_0(s), \dot{f}_0(s))\big]ds+\int_{0}^{t-\tau} \big[  R(f_0(s)) -L(f_0(s),\dot{f}_0(s))\big]ds+\beta_0(f_0(0)).
 \]
 Since $F_s(f_0)\geq a$, for all $s\in [0,t]$ and in particular for all $s\in [0,t-\tau]$, we deduce that
 \[
 \begin{array}{rl}
 u_a(t,x) &\leq \int_{t-\tau}^t \big[  R(f_0(s)) -L(f_0(s),\dot{f}_0(s))\big]ds + u_a(t-\tau,f_0(t-\tau))\\
 &\leq \sup_{\underset{f(t)=x}{f\in \mathrm{AC}[t-\tau,t]}} \int_{t-\tau}^t \big[  R(f(s)) -L(f(s),\dot{f}(s))\big]ds +u_a(t-\tau,f(t-\tau)).
 \end{array}
 \]
 Let us now assume that $f_1:[t-\tau,t]\to \R$ is such that $f_1(t)=x$ and
  \beq
  \label{u-less-gamma}
 u_a(t,x) < \int_{t-\tau}^t \big[  R(f_1(s)) -L(f_1(s),\dot{f}_1(s))\big]ds + u_a(t-\tau,f_1(t-\tau)).
 \eeq
 Let also $f_2$ be an optimal trajectory such that $f_2(t-\tau)=f_1(t-\tau)$, $u_a(t-\tau,f_1(t-\tau))=F_{t-\tau}(f_2)$ and $F_s(f_2)\geq a$ for all $s\in [0,t-\tau]$. We then define
 \[
 f_3(s)=
 \begin{cases}
 f_2(s)& s\in [0,t-\tau],\\
 f_1(s)& s\in [t-\tau,t].
 \end{cases}
 \]
 Then it is immediate that $u_a(t,x)<F_t(f_3)$.  We will prove that $F_s(f_3)\geq a$, for all $s\in [0,t]$, which leads to a contradiction with the latter inequality and the definition of $u_a$. Notice that since $F_s(f_2)\geq a$, for all $s\in [0,t-\tau]$, we deduce that $F_s(f_3)\geq a$, for all $s\in [0,t-\tau]$. To prove this property for $s\in [t-\tau,t]$, we write
 \[
 F_s(f_3)>u_a(t,x)-\int_s^t  \big[  R(f_1(s)) -L(f_1(s),\dot{f}_1(s))\big]ds.
 \]
 Since $R$ is bounded from above and $L$ is bounded from below, we deduce that, for all $s\in [t-\tau,t]$ and $C$ a positive constant,
  \[
 F_s({f_3})> u_a(t,x)-C(t-s)\geq  u_a(t,x)-C(t-\tau).
 \]
 Since $u_a(t,x)>a$, choosing $\tau$ small enough, we obtain that $F_s(f_3)\geq a$ for all $s\in [t-\tau,t]$ and hence $F_s(f_3)\geq a$ for all $s\in [0,t]$.
 \end{proof}

 \medskip
 
 {\bf Proof of Theorem \ref{prop:var-to-visc}. } Thanks to Lemma \ref{lem:ucont}, $u_a$ is locally Lipschitz continuous in $\overline \Omega_a$ and due to Lemma \ref{lem:dynprog-vois} it satisfies \fer{u:dynPvois}. It is then immediate (see \cite[Section 3.3]{GB:13}) that $u$ is a viscosity solution to the Hamilton-Jacobi equation  \fer{eq:HJ} in $\Omega_a$. We conclude the proof using the uniqueness of locally Lipschitz and bounded from above viscosity solutions to \fer{eq:HJ}  \cite{GD.HF:00}.  Note that the uniqueness result in \cite{GD.HF:00} is given for a Hamilton-Jacobi equation, with a convex Hamiltonian, in the whole domain. However, the proof can be adapted to Hamilton-Jacobi equations, with a convex Hamiltonian, and with Dirichlet boundary conditions. Note also that when $\Omega_a$ is bounded, the uniqueness follows from more standard arguments as in \cite[Section 5]{GB:13}. 
 \qed

 We next prove  Lemma \ref{lem:ua-cont}.

 {\bf Proof of Lemma \ref{lem:ua-cont}.} (i) We first prove \eqref{eq:cont-Omega}. Let $\mathcal K\subset \R^+\times \R$ be a compact set. We define
 \[
 f^{\mathcal{K}}(a)=\int_{\mathcal K}\mathds{1}_{\widetilde\Omega_{a}}(t,x)dtdx.
 \]
 Since $(\widetilde \Omega_a)_a$ is a decreasing family of sets, we deduce that $ f^{\mathcal{K}}_a$ is a decreasing function with respect to $a$. Since a decreasing function has at most a countable set of discontinuity points, we deduce that $ f^{\mathcal{K}}(a)$ is continuous at almost every point $a$. At a continuity point $a_0$ of $f^{\mathcal{K}}$ we have
 \[
 \lim_{a\to a_0}\int_{\mathcal K}\mathds{1}_{\widetilde\Omega_{a}}(t,x)dtdx=\int_{\mathcal K}\mathds{1}_{\widetilde\Omega_{a_0}}(t,x)dtdx.
 \]
 Moreover, since $\Gamma_{a_0}=\bigcup_{a> a_0} \displaystyle\widetilde \Omega_a  $, 
 we deduce that 
 \[
  \int_{\mathcal K}\mathds{1}_{\Gamma_{a_0}}(t,x)dtdx.=\lim_{a\downarrow a_0}\int_{\mathcal K}\mathds{1}_{\widetilde\Omega_{a}}(t,x)dtdx=\int_{\mathcal K}\mathds{1}_{\widetilde\Omega_{a_0}}(t,x)dtdx.
 \]
 It follows that, for a.e. $a_0\in \R$,
 \[
    \int_{\mathcal  K} \mathds{1}_{\widetilde \Omega_{a_0}\setminus\Gamma_{a_0}}(t,x)dtdx=0.
 \]
Since this equality holds a.e. for all compact set 
$\mathcal K\subset \R^+\times\R$, we deduce that,  for a.e. $a_0\in \R$,
 \[
    \int_{\R^+\times\R} \mathds{1}_{\widetilde \Omega_{a_0}\setminus\Gamma_{a_0}}(t,x)dtdx=0.
 \]
 (ii) We next prove \eqref{ua-cont}. Let $\mathcal K\subset \R^+\times \R$, be a compact set. We then define
 \[
 g(a)=\int_{\mathcal K} \mathds{1}_{\widetilde \Omega_a} u_a(t,x)dtdx.
 \]
 Notice that $\mathds{1}_{\widetilde \Omega_a} u_a(t,x)$ is decreasing with respect to $a$. It hence converges, as $a\downarrow a_0$, to 
 $\mathds{1}_{\Gamma_{a_0}} v(t,x)$, for a certain function $v(t,x)$. Moreover, since $u_a$ is locally   Lipschitz in $\widetilde\Omega_a$ with respect to $t$ and $x$, with a locally uniform dependence on $a$, we deduce that $v(t,x)$ is indeed a continuous function in $\Gamma_{a_0}$. We also have
 \[
 \lim_{a\downarrow a_0} g(a)=\int_{\mathcal K}\mathds{1}_{\Gamma_{a_0}}v(t,x)dtdx.
 \]
We also notice that $g$ is a decreasing function with respect to $a$. Consequently, $g$ is continuous with respect to $a$, for almost every $a$. We deduce that, for almost every $a_0\in \R$, 
 \[
 \int_{\mathcal K} \mathds{1}_{\widetilde \Omega_{a_0}} u_{a_0}(t,x)dtdx=g(a_0)=\lim_{a\downarrow a_0} g(a)=\int_{\mathcal K}\mathds{1}_{\Gamma_a} v(t,x)dtdx.
 \]
From the monotonicity of $\Omega_a$ and $u_a$ we also obtain
 \[
 \mathds{1}_{\widetilde \Omega_{a_0}} u_{a_0}(\cdot,\cdot)\geq\mathds{1}_{\Gamma_{a_0}} v(\cdot,\cdot).
 \]
 Combining the lines above we obtain that, for almost every {$a_0\in \R$ and }$(t,x)\in \mathcal K$, 
 \[
  \mathds{1}_{\widetilde \Omega_{a_0}} u_{a_0}(t,x))=\mathds{1}_{\Gamma_{a_0}} v(t,x),
 \]
 and consequently, for almost every {$a_0\in \R$ and }$(t,x)\in \mathcal K \cap \Gamma_{a_0}$, 
 \[
 u_{a_0}(t,x)=v(t,x).
 \]
 Finally, since $u_{a_0}$ and $v$ are both continuous in $\Gamma_{a_0}$, we deduce that {for almost every $a_0\in \R$ and } for all
 $(t,x)\in \mathcal K \cap \Gamma_{a_0}$, 
 \[
 u_{a_0}(t,x)=v(t,x)=\lim_{a\downarrow a_0} u_a(t,x).
 \]
 Since this equality holds in $ \mathcal K \cap \Gamma_{a_0}$ for a.e. $a_0$ and any compact set $\mathcal K$, we deduce that it also holds for a.e. $a_0$ and for all  $(t,x)\in \Gamma_{a_0}$. Combining this property with \eqref{cont-ua-Omegac} we obtain \eqref{ua-cont}.
\qed

\bigskip

{{\bf Proof of Theorem \ref{cor:HJ}.} (i) We first prove the lower bound. From Theorem \ref{thm:borneinf} we deduce that 
\[
 \liminf_{K\rightarrow +\infty}   \frac{1}{\log K}\log N^{K,G^{x,\delta}_t}_{t} \geq \sup\{F_t(f); f\in G^{x,\delta}_t, \; \forall s\in [0,t],\; {F_s(f)}> 0\}. 
\]
Let $a>0$. Then, we have for all $a>0$ and $\delta>0$,
\[
\{f\in AC[0,t], \; f(t)=x,\; \forall s\in [0,t],\; {F_s(f)}\geq a\}\subset \{f\in G^{x,\delta}_t, \; \forall s\in [0,t],\; {F_s(f)}> 0\}. 
\]
Using \eqref{upos} we obtain that, for all $a>0$ and $\delta>0$,
\[
u_a(t,x)\leq \sup\{F_t(f); f\in G^{x,\delta}_t, \; \forall s\in [0,t],\; {F_s(f)}> 0\}. 
\]
Combining the properties above we conclude that
\[
\lim_{a\downarrow 0}u_a(t,x)\leq \liminf_{\delta\to 0}\liminf_{K\rightarrow +\infty}  \frac{1}{\log K} \log N^{K,G^{x,\delta}_t}_{t}.  
\]
(ii) We next prove the upper bound. From Theorem \ref{thm:bornesup} we obtain that 
\[
 \limsup_{K\rightarrow +\infty}   \frac{1}{\log K}\log N^{K,A^{x,\delta}_t}_{t} \leq \sup\{F_t(f); f\in A^{x,\delta}_t, \; \forall s\in [0,t],\; F_s(f)\geq 0\}. 
\]
Similarly to Lemma \ref{lem:optimal-traj}, and since the set $A_t^{x,\delta}$ is a closed set, there exists an optimal trajectory $f^\delta$ which maximizes $F_t(\cdot)$ in the set above, with $F_s(f^\delta)\geq 0$ for all $s\in [0,t]$. Moreover, $\|f^\delta\|_{W^{1,\infty}([0,t])}$ is  bounded uniformly with respect to $\delta$. Note that the $W^{1,\infty}$ bound is proved in  \cite{Fathi-book} and \cite[Lemma 6]{VC.KL:19} in the maximization problem with a fixed ending point. Here, we consider the trajectories with ending points in $[x-\delta,x+\delta]$. Therefore, if $f_0$ is the optimal trajectory, $f_0$ is also an optimal trajectory with ending point at $f_0(t)$ and the same result applies.   We deduce that
\[
\limsup_{\delta\to 0}\limsup_{K\rightarrow +\infty}   \frac{1}{\log K}\log N^{K,A^{x,\delta}_t}_{t} \leq 
\limsup_{\delta\to 0} F_t(f^\delta).
\]
Let $(f^{\delta_n})_n$ be a sequence of trajectories such that \[
\limsup_{\delta\to 0} F_t(f^\delta)=\lim_{n\to \infty} F_t(f^{\delta_n}).
\]
From the uniform bound in $W^{1,\infty}$, we deduce that, up to considering a subsequence, $f^{\delta_n}$ converges to a trajectory $f^0\in AC[0,t]$ such that
\[
\limsup_{\delta\to 0} F_t(f^\delta)=F_t(f^0),\qquad f^0(t)=x,\qquad F_s(f^0)\geq 0, \quad \forall s\in [0,t].
\]
It follows that
\[
\limsup_{\delta\to 0}\limsup_{K\rightarrow +\infty}   \frac{1}{\log K}\log N^{K,A^{x,\delta}_t}_{t} \leq u_0(t,x).
\]
\hfill$\Box$
}

\appendix

\section{Proof of the many-to-one formulas}
\label{sec:many-to-one}

\subsection{Proof of Proposition \ref{prop:FeynmanKac-bis} (i)}\label{app:manyto1}

		Let us give a simple  proof based on It\^o's formula. Let us first note that the intensity measure of ${Z}^K_t$, $\,
		\nu^K_{t}(dy)=\mathbb{E}_{\delta_{x}}\left[{Z}^K_{t}(dy)\right]\,$ defined for any $\varphi$ in $C_{b}(\mathbb{R})$ by
		\[
		\langle\nu^K_{t}, \varphi \rangle =\mathbb{E}_{\delta_{x}}\left[\langle {Z}^K_{t}, \varphi\rangle \right]
		\]
		is the unique weak solution of
		\begin{equation}
			\label{eq:prob2}
			\begin{cases}
				\partial_{t}\nu_{t}=\nu_t {\cal L}^K+ R\,\nu_{t},\\
				\nu_{0}=\delta_{x},
			\end{cases}
		\end{equation}
        where we denote by $\nu {\cal L}^K$ the adjoint of the operator ${\cal L}^K$ applied to the probability measure $\nu$.
		 Uniqueness of such a solution is proven as in Theorem 2.2 in \cite{HenryMeleardTran}.

	 Let us show that the r.h.s.\ term of \eqref{eq:MTO} also  satisfies \eqref{eq:prob2}. Uniqueness will yield the result.
		Let $\varphi$ in $C^1_{b}(\mathbb{R})$. 
		Applying It\^o's formula with jumps (e.g. \cite[Th.5.1]{ikedawatanabe}) to the semimartingale
		$\ \exp\left(\int_{0}^{t} R(X^K_s)ds \right)\varphi(X^K_{t})$,
		we have 
		\begin{align*}
		\exp\left(\int_{0}^{t} R(X^K_{s})ds \right)\varphi(X^K_{t}) 
        & -\varphi(X^K_{0}) 
		 -\int_{0}^{t}\exp\left(\int_{0}^{s} R(X^K_r)dr\right) {\cal L}^K\varphi(X^K_{s})\ ds\\
		& -\int_{0}^{t}\varphi(X^K_{s})R(X^K_s)\exp\left(\int_{0}^{s} R(X^K_r)dr \right)\ ds.		    
		\end{align*}
		is a square integrable martingale since $R$ is bounded. Taking the expectation, we obtain that
		\begin{multline}
		    \label{ito}
			\mathbb{E}_{x}\left[\exp\left(\int_{0}^{t} R(X^K_s)ds \right)\varphi(X^K_{t}) \right] =\varphi(x)\\
			+  \mathbb{E}_{x}\bigg[\int_{0}^t \exp\left(\int_{0}^{s} R(X^K_r)dr \right)  \bigg\{R(X^K_s)\,\varphi(X^K_{s}) + {\cal L}^K \varphi(X^K_{s})\bigg\}ds\bigg].
		\end{multline} 
		
		If we define the measure $\mu_{t}$ for any test function $\varphi\in \Co^1_b(\R)$ by
		$$\langle \mu_{t}, \varphi\rangle = \mathbb{E}_{x}\left[\exp\left(\int_{0}^{t} {R}(X^K_s)ds \right)\varphi(X^K_{t}) \right] ,$$
		we obtain from \eqref{ito}  that
		$$\langle \mu_{t}, \varphi\rangle = \langle \delta_{x}, \varphi\rangle +\int_{0}^t \langle \mu_{s},R\varphi+{\cal L}^K\varphi \rangle ds.$$
		This proves that the flow $(\mu_{t}, t\ge 0)$ is a weak solution of \eqref{eq:prob2} and the conclusion follows by uniqueness of solution of the equation.

\subsection{Proof of Proposition \ref{prop:forks}}\label{app:prop-forks}

Recall that $V^K_{[0,t]}$ is the set of individuals born before time $t$ and that $V^K_t$ is the set of individuals still alive at time $t$. 

Note that in our model, the trait $x$ of an individual $v$ remains constant during their life. The total rate of event for such an individual will be denoted here by
\[\Lambda(x)=b(x)+p(x)+d(x),\]
and the time $S_v$ where they dies or gives birth has a density $\Lambda(x)\exp\big(\Lambda(x)(s-S^v_0)\big)\ind_{s>S^v_0}$ with respect to Lebesgue's measure, conditionally on its birth time $S_v^0$. Let us note that
$$v\in V^K_s\Longleftrightarrow S_v^0\le s< S_v.$$

First, we have for all $v\in \mathcal{U}$,
\begin{align}
\lefteqn{   \E_{\delta_x}\Big[\ind_{v\in V^{K}_{[0,t]}} \Phi\big((X_{r\wedge S_v}^{K,v}, r\leq t),S_v \wedge t\big)\Big] }\nonumber\\
    = & \E_{\delta_x}\Big[\ind_{v\in V^{K}_{[0,t]}} \int_{S^0_v}^{+\infty} \Phi\big((X_{r\wedge s}^{K,v}, r\leq  t),s \wedge t \big) \Lambda(X^{K,v}_s)e^{-\int_{S^0_v}^s \Lambda(X^{K,v}_r)dr}\ ds\Big]\nonumber\\
    = & \E_{\delta_x}\Big[\ind_{v\in V^{K}_{[0,t]}} \int_{S^0_v}^{+\infty} \Phi\big((X_{r\wedge s}^{K,v}, r\leq  t),s\wedge t\big) \Lambda(X^{K,v}_s)\Big(\int_{s}^{+\infty} \Lambda(X^{K,v}_\tau) e^{-\int_{S^0_{{v}}}^\tau \Lambda(X^{K,v}_r)dr} d\tau\Big)\ ds\Big]\nonumber\\
    = & \E_{\delta_x}\Big[\ind_{u\in V^{K}_{[0,t]}} \int_{S^0_v}^{+\infty} \Big(\int_{S^0_v}^{\tau}\Phi\big((X_{r\wedge s}^{K,v}, r\leq  t),s\wedge t\big) \Lambda(X^{K,v}_s)  \ ds\Big) \Lambda(X^{K,v}_\tau) e^{-\int_{S^0_v}^\tau \Lambda(X^{K,v}_r)dr} \ d\tau\Big]\nonumber\\
    = & \E_{\delta_x}\Big[\ind_{v\in V^{K}_{[0,t]}} \int_{S^0_v}^{S_v} \Phi\big((X_{r\wedge s}^{K,v}, r\leq   t),s\wedge t\big) \Lambda(X^{K,v}_s)  \ ds\Big],\label{etape6}
\end{align}
where we used Fubini's theorem at the third equality, and where we recognized the distribution of $S_v$ to obtain the last equality. Then, summing \eqref{etape6} over $v\in \mathcal{U}$ entails 
    \begin{multline*}
        \E_{\delta_x}\left[\sum_{v\in {V}^{K}_{[0,t]}} \Phi(({X}^{K,v}_{r\wedge S_v},\ r\leq   t), S_v \wedge t) \right]\\
        \begin{aligned}
        = & \sum_{v\in \mathcal{U}} \E_{\delta_x}\Big[\ind_{v\in V^{K}_{[0,t]}} \int_{S^0_v}^{S_v} \Phi\big((X_{r\wedge s}^{K,v}, r\leq  t),s\wedge t\big) \Lambda(X^{K,v}_s)  \ ds\Big]\\
        = & \sum_{v\in \mathcal{U}} \E_{\delta_x}\Big[\int_{0}^{t} \ind_{v\in V^K_s} \ \Phi\big((X_{r\wedge s}^{K,v}, r\leq  t),s\big) \ \Lambda(X^{K,v}_s)  \ ds\Big]\\
        & \hspace{4cm} + \sum_{v\in \mathcal{U}} \E_{\delta_x}\Big[\ind_{v\in V^K_t} \ \Phi\big((X_{r}^{K,v}, r\leq  t), t\big) \int_{t}^{S_v}   \ \Lambda(X^{K,v}_s)  \ ds\Big]\\
        = & \int_0^t \E_{\delta_x}\Big[ \sum_{v\in V^K_s}\Phi\big((X_{r\wedge s}^{K,v}, r\leq  t),s\big) \ \Lambda(X^{K,v}_s)  \Big]\ ds\\
        & \hspace{4cm} + \E_{\delta_x}\Big[\sum_{v \in V^K_t} \Phi\big((X_{r}^{K,v}, r\leq  t), t\big) \Lambda(X^{K,v}_t) \, (S_v-t)\Big]\\
        = & \int_0^t \E_{\delta_x}\Big[ \sum_{v\in V^K_s}\Phi\big((X_{r\wedge s}^{K,v}, r\leq  t),s\big) \ \Lambda(X^{K,v}_s)  \Big]\ ds
         + \E_{\delta_x}\Big[\sum_{v\in V^K_t} \Phi\big((X_{r}^{K,v}, r\leq  t), t\big)\Big]\\
        \end{aligned}
    \end{multline*}
where we used in the third line that for $v\in V^K_t$ and $t\leq s<S_v$, $X^{K,v}_s=X_t^{K,v}$. For the last equality, we notice that 
\[\E\big[S_v-t \ |\ \mathcal{F}_t\big]=\frac{1}{\Lambda(X^{K,v}_t)}.\]
Using Proposition \ref{prop:FeynmanKac-bis}(ii), we obtain
\begin{multline*}
        \E_{\delta_x}\left[\sum_{v\in {V}^{ K}_{[0,t]}} \Phi(({X}^{K,v}_{r\wedge S_v},\ r\leq  t), S_v \wedge t) \right]\\
        \begin{aligned}= & \int_0^t  \E_x\Bigg[\Phi\big((X_{r\wedge s}^{K}, r\leq  t),s\big) \Lambda(X^K_s) \exp\Big(\int_0^s R(X^K_r)dr\Big)\Bigg]\ ds \\
        + & \E_x\Bigg[\Phi\big((X_r^{K}, r\leq t),t\big)  \exp\Big(\int_0^t R(X^K_r)dr\Big)\Bigg].
        \end{aligned}
    \end{multline*}
This ends the proof of Proposition \ref{prop:forks}.

\section{On Skorohod balls around absolutely continuous functions}\label{app:dsko-unif}

We prove here the inclusion stated in \eqref{etape21}. Let $f\in AC[0,t]$. Note that for any homeomorphism $\lambda$ of $[0,t]$, we have
\begin{align}\label{etape23}
\sup_{r\in [0,t]} \big|f(r)-g(r)\big| \leq & \sup_{r\in [0,t]} \big|f\circ \lambda(r)-g(r)\big| + \sup_{r\in [0,t]} |f\circ \lambda(r)-f(r)|.
\end{align}Thus, if $\dSko(f,g)<\varepsilon$, the first term in the right hand side can be made smaller than $2\varepsilon$ for a good choice of homeomorphism $\lambda$. By \eqref{def:dSko}, the latter homeomorphism can be chosen such that 
\[\sup_{r\in [0,t]} |\lambda(r)-r| \leq (e^\varepsilon-1)t.\]
Then, the second term in the right hand side of \eqref{etape23} is upper-bounded by the modulus of continuity $\omega(f,(e^\varepsilon-1)t)$, where 
\[\omega(f,\eta)=\sup_{|t-s|<\eta} \big|f(t)-f(s)\big|.\]
Since $f$ is absolutely continuous, and since $(e^{\varepsilon}-1)t$ converges to zero for $\epsilon\rightarrow 0$, the second term also converges to zero.
Gathering these equations gives \eqref{etape21}, for $\eta(\varepsilon)=2\varepsilon + \omega(f,(e^{\varepsilon}-1) t)$.

\section*{Acknowledgements}
We thank B. Mallein and P. Maillard for enlightening discussions and references.
This work is funded by the European Union (ERC, SINGER, 101054787 and ERC-2024-COG MUSEUM-101170884). Views and opinions expressed are however those of the author(s) only and do not necessarily reflect those of the European Union or the European Research Council. Neither the European Union nor the granting authority can be held responsible for them. This work has also been supported by the Chair ``Modélisation Mathématique et Biodiversité" of Veolia Environnement-Ecole Polytechnique-Museum National d’Histoire Naturelle-Fondation X. V.C.T. acknowledge the support of the R-CDP-24-004-C2EMPI project, funded by the French State under the France-2030 programme, the University of Lille, the Initiative of Excellence of the University of Lille, the European Metropolis of Lille.

\bigskip

{\footnotesize
\providecommand{\noopsort}[1]{}\providecommand{\noopsort}[1]{}\providecommand{\noopsort}[1]{}\providecommand{\noopsort}[1]{}

}

\end{document}